\DeclareMathAlphabet{\mathpzc}{OT1}{pzc}{m}{it}
\renewcommand{\int}{\text{int}}
\newcommand{\sw}{\bigcurlyvee}
\newcommand{\ds}{\displaystyle}
\newcommand{\ui}{I}
\newcommand{\lb}{\langle}
\newcommand{\rb}{\rangle}
\newcommand{\scrr}{\mathscr{R}}
\newcommand{\scrs}{\mathscr{S}}
\newcommand{\bba}{\mathbb{A}}
\newcommand{\bbe}{\mathbb{E}}
\newcommand{\bbn}{\mathbb{N}}
\newcommand{\bbz}{\mathbb{Z}}
\newcommand{\simeqir}{\simeq_{ir}}
\newcommand{\llb}{\llbracket}
\newcommand{\rrb}{\rrbracket}
\newcommand{\im}{\operatorname{Im}}
\newcommand{\loopjn}{\Omega^{n}_{J}}
\newcommand{\loopn}{\Omega^{n}}
\newcommand{\mcw}{\mathcal{W}}
\newtheorem{theorem}{Theorem}[section]
\newtheorem{lemma}[theorem]{Lemma}
\newtheorem{proposition}[theorem]{Proposition}
\newtheorem{corollary}[theorem]{Corollary}
\theoremstyle{definition}\newtheorem{definition}[theorem]{Definition}
\newtheorem{example}[theorem]{Example}
\newtheorem{remark}[theorem]{Remark}
\newtheorem{conjecture}[theorem]{Conjecture}
\begin{document}
\title[Whitehead products and infinite sums]{Identities for Whitehead products\\and infinite sums}
\keywords{Whitehead product, homotopy group, infinite sum, n-dimensional infinite earring, Barratt-Milnor Spheres}
\author[J. Brazas]{Jeremy Brazas}
\address{West Chester University\\ Department of Mathematics\\
West Chester, PA 19383, USA}
\email{jbrazas@wcupa.edu}

\subjclass[2010]{Primary 55Q15, 55Q52  ; Secondary 55Q35,08A65  }

\date{\today}

\begin{abstract}
Whitehead products and natural infinite sums are prominent in the higher homotopy groups of the $n$-dimensional infinite earring space $\mathbb{E}_n$ and other locally complicated Peano continua. In this paper, we derive general identities for how these operations interact with each other. As an application, we consider a shrinking wedge $X=\bigcurlyvee_{j\in\mathbb{N}}X_j$ of finite $(n-1)$-connected CW-complexes and compute the infinite-sum closure $\mathcal{W}_{2n-1}(X)$ of the set of Whitehead products $[\alpha,\beta]$ in $\pi_{2n-1}\left(X\right)$ where $\alpha,\beta\in\pi_n(X)$ are represented in respective sub-wedges that meet only at the basepoint. In particular, we show that $\mathcal{W}_{2n-1}(X)$ is canonically isomorphic to $\prod_{j=1}^{\infty}\left(\pi_{n}(X_j)\otimes \prod_{k>j}\pi_n(X_k)\right)$. The insight provided by this computation motivates a conjecture about the isomorphism type of the elusive groups $\pi_{2n-1}(\mathbb{E}_n)$, $n\geq 2$. 
\end{abstract}

\maketitle

\section{Introduction}

Following Katsuya Eda's homotopy classification of one-dimensional Peano continua \cite{Edaonedim}, there has been increased interest in the higher homotopy theory of Peano continua. Perhaps the most fundamental problem in this arena is to characterize the homotopy groups of the $n$-dimensional earring space $\bbe_n$ in terms of the homotopy groups of spheres as an infinitary extension of Hilton's Theorem \cite{HiltonSpheres}. Even a full description of $\pi_3(\bbe_2)$ for the $2$-dimensional earring $\bbe_2$ remains elusive. This paper is intended to provide some progress toward establishing such descriptions. Combining the insights of the current paper with methods developed in the author's recent paper \cite{BrazSequentialnconn}, we are motivated to make the first conjecture about the isomorphism type of $\pi_{2n-1}(\bbe_n)$. 

Homotopy groups of spaces like $\bbe_n$ cannot be computed using standard methods in homotopy theory because these groups come equipped with a non-trivial infinite sum operation $\left[\sum_{j=1}^{\infty}f_j\right]$, which may be formed when the images of the based maps $f_j:S^n\to X$ converge to the basepoint. Such infinite sums extend the usual sum operation in homotopy groups, behave like absolutely convergent series of complex numbers in the sense that they are infinitely commutative \cite{Brazasncubeshuffle}, and sit at the forefront of the few existing computations that have been completely established \cite{EKR09,EK00higher,EKcones,Kawamurasuspensions}. There is substantial evidence, e.g. \cite{ABpacific,EKR07,EKR09nonaspher,Felt,KarimovRepovs}, suggesting that shape theoretic methods are insufficient for making characterizing homotopy groups or homotopy types of Peano continua. As new methods of computation continue to be developed in the infinitary setting, there is a need to understand how infinite sums and classical homotopy operations interact with each other. In this paper, we establish identities for how to deal with various combinations of infinite sums and Whitehead products, e.g. when infinite sums occur within Whitehead products and also when Whitehead products occur as the summands of an infinite sum.

We use commutator bracket notation for both maps and homotopy classes. On iterated loop spaces, there is a Whitehead bracket operation $\Omega^p(X)\times \Omega^{q}(X)\to \Omega^{p+q-1}(X)$, $(f,g)\mapsto [f,g]$ and this descends to a bracket operation $\pi_{p}(X)\times \pi_{q}(X)\to \pi_{p+q-1}(X)$, $(\alpha,\beta)\mapsto [\alpha,\beta]$ on homotopy classes. Whitehead products satisfy a variety of important identities such as bilinearity, graded symmetry, and a graded Jacoby identity (see Remark \ref{relations}). In Section \ref{sectionidentity}, we observe that these classical identities may be realized by deformations occurring entirely within the domain and may therefore be applied infinitely many times within a single homotopy. Due to topological restrictions, one should not expect an infinite analogue of bilinearity that allows one to ``pull out" an infinite sum from within a Whitehead product $\left[\sum_{j=1}^{\infty}f_j,g\right]$ by a continuous deformation. Indeed, the map $\left[\sum_{j=1}^{\infty}f_j,g\right]$ is defined if the maps $f_j:S^n\to X$ shrink toward the basepoint but the infinite sum $\sum_{j=1}^{\infty}[f_j,g]$ is not a continuous map unless $g$ is constant.  A culminating identity established in Section \ref{sectionidentity} is Theorem \ref{mainidentitylemma} that gives a useful expansion of the form $\sum_{i=1}^{\infty}\left[\sum_{j=1}^{\infty}f_{i,j},\sum_{j=1}^{\infty}g_{i,j}\right]$ when $f_{i,j},g_{i,j}\in\Omega^n(X)$.

One can readily visualize the deformations we construct in dimension $p=q=2$. The domain of a Whitehead product $[\alpha,\beta]\in \pi_3(X,x_0)$ of $\alpha,\beta\in \pi_2(X,x_0)$ may be represented by a Hopf link of solid tori in the interior of $I^3$. A mapping $[f,g]$ that represents $[\alpha,\beta]$ is defined by (1) sending cross-sectional disks $D$ of the two tori to $X$ by relative maps $f,g:(D,\partial D)\to (X,x_0)$ that represent $\alpha,\beta$ respectively and (2) mapping all points outside the interiors of the tori to $x_0$. The domain of an infinite sum map $\sum_{j=1}^{\infty}[f_j,g_j]$ can be decomposed as an infinite sequence of thickened Hopf links placed side-by-side and so that each link is mapped by $[f_j,g_j]$ as in the singular case. Note that these pairwise-disjoint Hopf links do not necessarily have to shrink toward a point in the domain. However, their images in $X$ must shrink toward the basepoint as $j\to\infty$. The domain of a Whitehead product map $\left[\sum_{j=1}^{\infty}f_j,g\right]$ is represented by a single torus $T$ (the domain for $g$) simultaneously linked with infinitely many solid tori $S_j$ for $f_j$. The tori $S_j$ will shrink in thickness and converge to a circle, which is linked with $T$ and mapped to the basepoint (see the left image in Figure \ref{tori}). Lastly, $\left[\sum_{j=1}^{\infty}f_j,\sum_{j=1}^{\infty}g_j\right]$ is represented by infinitely many tori $T_j$ simultaneously linked with infinitely many tori $S_j$ where both sequences of tori converge to circles (see the right image in Figure \ref{tori}). From this visual perspective, the identities we derive carry out infinite sequences of  ``pinching," ``joining," and ``rearrangement" deformations to families of linked manifolds in the domains of maps on spheres. 
\begin{figure}[h]
\centering \includegraphics[height=1.8in]{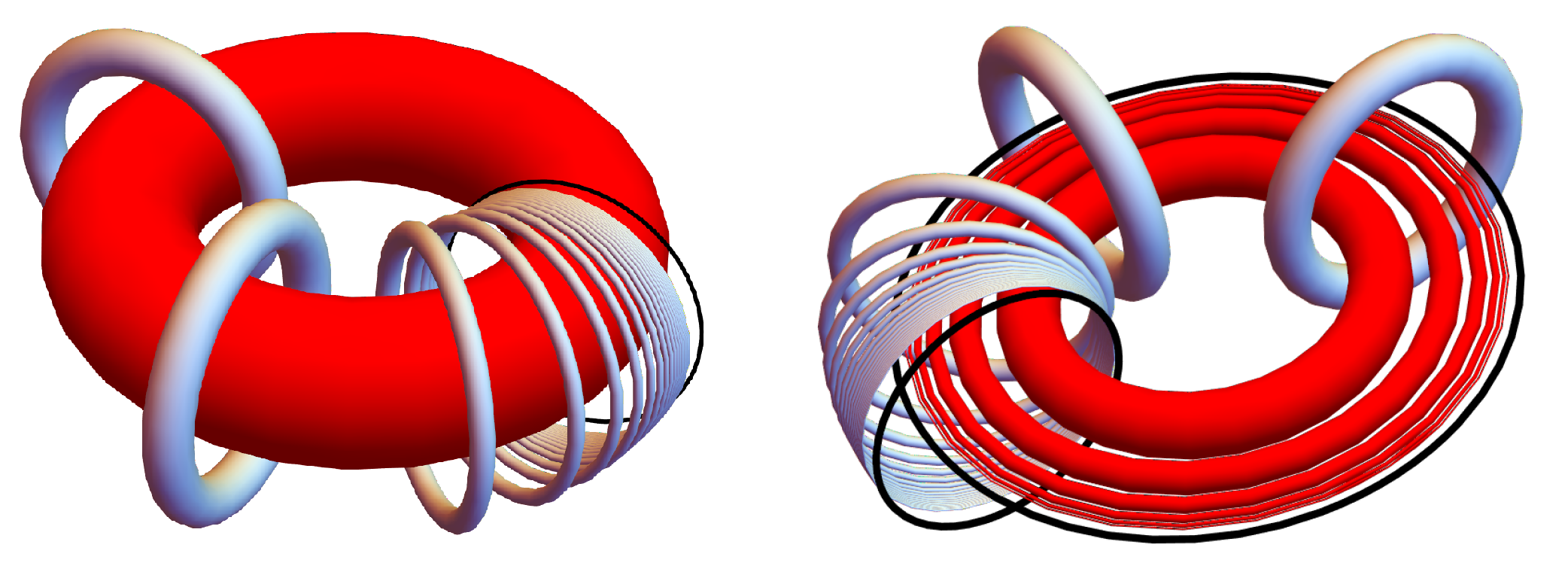}
\caption{\label{tori}Representation of the domain of Whitehead products $\left[\sum_{j=1}^{\infty}f_j,g\right]$ (left) and $\left[\sum_{j=1}^{\infty}f_j,\sum_{j=1}^{\infty}g_j\right]$ (right).}
\end{figure}

To illustrate the utility of our identities, we analyze the first situation in which they are applicable, namely Whitehead products $[\alpha,\beta]\in\pi_{2n-1}(X)$ for $\alpha,\beta\in\pi_{n}(X)$, $n\geq 2$. To provide some context for our main theorem, we recall the finite scenario. If $m\in\bbn$ and $X=\bigvee_{j=1}^{m}X_j$ is a wedge of $(n-1)$-connected CW-complexes, standard methods in homotopy theory \cite{WhiteheadEOH} may be used to compute $\pi_{2n-1}(X)$ in terms of the homotopy groups of the wedge-summands $X_j$. In particular, the long-exact sequence of the pair $(\prod_{j=1}^{m}X_j,X)$ gives a splitting of $\pi_{2n-1}(X)$ into the direct sum of the groups $\pi_{2n-1}(\prod_{j=1}^{m}X_j)\cong\bigoplus_{j=1}^{m}\pi_{2n-1}(X_j)$ and $\pi_{2n}(\prod_{j=1}^{m}X_j,X)\cong \bigoplus_{j=1}^{m-1}\left(\pi_{n}(X_j)\otimes \bigoplus_{k=j+1}^{m}\pi_{n}(X_k)\right)$. The latter sum may be regarded as the direct sum of the groups $\pi_{n}(X_j)\otimes \pi_{n}(X_k)$ indexed over two-element subsets $\{j,k\}$ of $\{1,2,\dots,m\}$. Moreover, the elements of this sum may be characterized by the fact that the resulting injection $\pi_{n}(X_j)\otimes \pi_{n}(X_k)\to \pi_{2n-1}(X)$ is precisely the Whitehead bracket $\alpha\otimes\beta\mapsto [\alpha,\beta]$. 

As an infinite extension of the finite wedges in the previous paragraph, we consider a countably infinite ``shrinking" wedge $X=\sw_{j\in\bbn}X_j$ of $(n-1)$-connected CW-complexes in which every neighborhood of the basepoint contains all but finitely many of the wedge-summand spaces $X_j$ (see Definition \ref{swdef}). For example, $\bbe_n=\sw_{j\in\bbn}S^n$ is the shrinking wedge of $n$-spheres and satisfies the connectedness hypotheses \cite{EK00higher}. We will write $X_{>k}$ to mean the sub-shrinking wedge $\sw_{j=k+1}^{\infty}X_j$ consisting of all but the first $k$ wedge-summands. Infinite commutativity and the long exact sequence of the pair $(\prod_{j=1}^{\infty}X_j,X)$ give an analogous splitting of $\pi_{2n-1}(X)$ as the sum of $\pi_{2n-1}(\prod_{j=1}^{\infty}X_j)\cong \prod_{j=1}^{\infty}\pi_{2n-1}(X_j)$ and $\pi_{2n}(\prod_{j=1}^{\infty}X_j,X)\cong \ker(\sigma_{\#})$ where $\sigma:X\to \prod_{j=1}^{\infty}X_j$ is the inclusion map \cite{Kawamurasuspensions}. Hence, to compute $\pi_{2n-1}(X)$ in terms of the homotopy groups of the wedge-summand spaces $X_j$, we may focus our attention to $\ker(\sigma_{\#})$.

Recall from the finite-wedge situation that the elements of the relative homotopy group are given by Whitehead products $[\alpha,\beta]$ where $\alpha$ and $\beta$ come from \textit{distinct} wedge-summands. We require an infinite analogue of this idea. A pair $(\alpha,\beta)$ of elements of $\pi_{n}(X)$ are said to be \textit{wedge-disjoint} if there are disjoint subsets $F,G\subseteq \bbn$ such that $\alpha\in \pi_n(\bigcup_{j\in F}X_j)$ and $\beta\in \pi_n(\bigcup_{j\in G}X_j)$. We define a subgroup $\mathcal{W}_{2n-1}(X)$ of $\pi_{2n-1}(X)$ that turns out to be the smallest subgroup of $\pi_{2n-1}(X)$ that (1) is closed under infinite sums and (2) contains all Whitehead products $[\alpha,\beta]$ such that $\alpha,\beta\in\pi_n(X)$ and the pair $(\alpha,\beta)$ is wedge-disjoint. After showing that $\mathcal{W}_{2n-1}(X)$ is a subgroup of $\ker(\sigma_{\#})$ (Corollary \ref{inkercor}), we make the following computation.

\begin{theorem}\label{maintheorem}
If $n\geq 2$ and $X=\sw_{j\in\bbn}X_j$ is a shrinking wedge of finite $(n-1)$-connected CW-complexes, then the canonical homomorphism \[\Phi:\prod_{j=1}^{\infty}\left(\pi_{n}(X_j)\otimes \pi_{n}(X_{>j})\right)\to \pi_{2n-1}(X)\]given by $\Phi\left(\left(\sum_{i=1}^{r_j}[f_{j,i}]\otimes[g_{j,i}]\right)_{j\in\bbn}\right)= \left[\sum_{j=1}^{\infty}\sum_{i=1}^{r_j}[f_{j,i},g_{j,i}]\right]$ is injective and has image equal to $\mcw_{2n-1}(X)$. Moreover, $\mcw_{2n-1}(X)\cong \prod_{j=1}^{\infty}\left(\pi_{n}(X_j)\otimes \prod_{k>j}\pi_n(X_k)\right)$.
\end{theorem}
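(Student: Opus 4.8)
The plan is to verify in turn that $\Phi$ is a well-defined homomorphism, that it is injective, and that its image is exactly $\mcw_{2n-1}(X)$; the asserted isomorphism then drops out from an algebraic identification of the domain. For well-definedness, first note that the $j$-th summand $\sum_{i}[f_{j,i},g_{j,i}]$ is represented by a map with image in $\sw_{k\geq j}X_k$ (since $f_{j,i}$ lands in $X_j$ and $g_{j,i}$ in $X_{>j}$), and because this sequence of subspaces shrinks to the basepoint as $j\to\infty$, the infinite sum defining $\Phi$ converges. Independence of the choices of representatives and of the expression of each $a_j\in\pi_n(X_j)\otimes\pi_n(X_{>j})$ as a sum of simple tensors follows from the bilinearity of the Whitehead product, while the homomorphism property follows from the additivity of infinite sums, i.e. $\left[\sum_j(h_j+h_j')\right]=\left[\sum_j h_j\right]+\left[\sum_j h_j'\right]$, an instance of the rearrangement identities of Section \ref{sectionidentity}.

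For injectivity I would exploit the canonical retraction $p_m\colon X\to \bigvee_{j=1}^m X_j$ collapsing $X_{>m}$ to the basepoint. Suppose $\Phi((a_j)_j)=0$. Since $p_m$ is continuous it commutes with infinite sums, and it kills every summand with $j>m$, so by naturality of the Whitehead product one computes $(p_m)_\#\Phi((a_j)_j)=\sum_{j=1}^m\beta_j^{(m)}$, where $\beta_j^{(m)}$ is the image under the finite Whitehead-bracket map of the projection of $a_j$ into $\pi_n(X_j)\otimes\bigoplus_{k=j+1}^m\pi_n(X_k)$. By the finite-wedge computation \cite{WhiteheadEOH}, these brackets inject $\bigoplus_{j<k\leq m}\pi_n(X_j)\otimes\pi_n(X_k)$ as a direct summand of $\pi_{2n-1}(\bigvee_{j=1}^m X_j)$, so the vanishing of the sum forces the $\pi_n(X_j)\otimes\pi_n(X_k)$-component of $a_j$ to vanish for every $j<k$. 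To conclude $a_j=0$ I use the product description $\pi_n(X_{>j})\cong\prod_{k>j}\pi_n(X_k)$ \cite{EK00higher} together with the fact that $\pi_n(X_j)$ is finitely generated (as $X_j$ is a finite $(n-1)$-connected complex): tensoring a finitely generated abelian group with a product commutes with the product, so $\pi_n(X_j)\otimes\pi_n(X_{>j})\cong\prod_{k>j}\big(\pi_n(X_j)\otimes\pi_n(X_k)\big)$, and the vanishing of all components gives $a_j=0$.

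The inclusion $\im\Phi\subseteq\mcw_{2n-1}(X)$ is immediate, since every $[f_{j,i},g_{j,i}]$ is a wedge-disjoint bracket and $\mcw_{2n-1}(X)$ is closed under the finite and infinite sums appearing in $\Phi$. For the reverse inclusion I would verify that $\im\Phi$ itself satisfies the two defining closure properties and invoke minimality. It is a subgroup as the image of a homomorphism. To see it contains an arbitrary wedge-disjoint bracket $[\alpha,\beta]$, write $\alpha=\sum_{j\in F}\alpha_j$ and $\beta=\sum_{k\in G}\beta_k$ via the product structure, split the pairs $(j,k)$ according to whether $j<k$ or $j>k$, and apply graded symmetry $[\alpha,\beta]=(-1)^n[\beta,\alpha]$ together with the distributivity identity of Theorem \ref{mainidentitylemma} to reorganize $[\alpha,\beta]$ into the normal form $\Phi((a_m)_m)$ with $a_m$ carrying the pairs having $\min(j,k)=m$. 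Closure under infinite sums follows similarly: given $x_l=\Phi(a^{(l)})$ with $\sum_l x_l$ convergent, the shrinking of images forces, for each fixed $j<k$, all but finitely many of the $\pi_n(X_j)\otimes\pi_n(X_k)$-components of the $a^{(l)}$ to vanish, so the componentwise sums $a_j:=\sum_l a_j^{(l)}$ are well-defined in $\pi_n(X_j)\otimes\pi_n(X_{>j})$ and $\Phi((a_j)_j)=\sum_l x_l$ after rearranging the double sum. Combining the resulting isomorphism $\Phi\colon\prod_j(\pi_n(X_j)\otimes\pi_n(X_{>j}))\to\mcw_{2n-1}(X)$ with $\pi_n(X_{>j})\cong\prod_{k>j}\pi_n(X_k)$ then yields $\mcw_{2n-1}(X)\cong\prod_j\big(\pi_n(X_j)\otimes\prod_{k>j}\pi_n(X_k)\big)$.

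I expect the main obstacle to be the rigorous justification of the infinite-sum manipulations, namely the distributivity used to normalize a wedge-disjoint bracket and the rearrangement of the double infinite sum used for closure. Both must be deduced from the identities of Section \ref{sectionidentity} rather than taken for granted, precisely because, as noted in the introduction, naive bilinearity fails for infinite sums occurring inside a Whitehead product.
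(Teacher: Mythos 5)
Your proposal is correct in outline and shares the paper's architecture (well-definedness via the summation homomorphism, injectivity via retractions plus the tensor--product lemma, surjectivity onto $\mcw_{2n-1}(X)$ via Theorem \ref{mainidentitylemma}), but it reorganizes both halves in genuinely different ways. For injectivity, you retract onto finite wedges $\bigvee_{j=1}^m X_j$ and invoke the full finite-wedge computation (Corollary \ref{finitecasecor}), whereas the paper retracts onto two-summand wedges $X_K\vee X_j$ and needs only the base case (Lemma \ref{basecaselemma}) together with Lemma \ref{projectionlemma}; the two are equivalent, with the paper's version slightly leaner since one well-chosen pair $(K,j)$ already detects a nonzero element, and both hinge on the same point you correctly identify: $\pi_n(X_j)\otimes \prod_{k>j}\pi_n(X_k)\cong \prod_{k>j}\bigl(\pi_n(X_j)\otimes\pi_n(X_k)\bigr)$ via finite generation (Lemma \ref{tensorlemma}, with Serre supplying finite generation). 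For the image, the paper takes a generic element of $\mcw_{2n-1}(X)$, which Lemma \ref{closurelemma} already puts in the form $\left[\sum_{j}[f_j,g_j]\right]$ with wedge-disjoint pairs, and normalizes it in one shot using the doubly-indexed identity of Theorem \ref{mainidentitylemma}; you instead show $\im\Phi$ contains all wedge-disjoint brackets (the single-index identity, Lemma \ref{bilinearitylemma2}, suffices there) and is closed under infinite sums, then appeal to minimality via $\mcw_{2n-1}(X)=cl_{\Sigma}(S)$, which is exactly the second statement of Lemma \ref{closurelemma}. Your route buys a conceptually cleaner ``verify the closure axioms'' structure; the paper's buys a single computation and avoids having to prove infinite-sum closure of $\im\Phi$ separately.

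One caution on the closure step, which is the most compressed part of your sketch. The phrase ``$\Phi((a_j)_j)=\sum_l x_l$'' quietly treats an infinite sum of homotopy \emph{classes}, which the paper warns is not well-defined in general (Example \ref{archipelagoexample}); the statement must be made for the specific representing maps $h_l$ converging to $x_0$ that the closure condition hands you. Your component-vanishing claim is correct and can be made rigorous cleanly: since $[h_l]=\Phi(a^{(l)})$ and $h_l$ is homotopic into $X_{\geq m_l}$ with $m_l\to\infty$, applying the retraction $p_{\geq m_l}$ gives $\Phi(a^{(l)})=\Phi\bigl(\text{truncation of }a^{(l)}\bigr)$, and injectivity of $\Phi$ (already established) forces $a^{(l)}_j=0$ for $j<m_l$; this also yields normal-form representatives of the $x_l$ with images in $X_{\geq m_l}$, which is precisely what you need to build a sequential homotopy and legitimately rearrange the double sum (Lemma \ref{doublesumlemma}). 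This is the same retraction maneuver the paper performs inside Lemma \ref{closurelemma}, so your argument is completable at the paper's level of rigor, but it must be spelled out rather than attributed to ``shrinking of images'' alone. Finally, your sign $(-1)^n$ in the graded symmetry step agrees with the paper's $(-1)^{n^2}$ since $n^2\equiv n \pmod 2$, so that discrepancy is harmless.
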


Proving that $\im(\Phi)=\mcw_{2n-1}(X)$ requires a combination of several identities from Sections \ref{sectionprelim} and \ref{sectionidentity}. Proving that $\Phi$ is injective requires a combination of techniques from shape theory \cite{MS82} and a non-trivial computation of Eda-Kawamura \cite{EK00higher}. The hypothesis that the CW-complexes $X_j$ are finite appears to be required for both parts of the proof since the groups $\pi_n(X_j)$ must be finitely generated in order to apply a special situation where tensor products preserve certain infinite direct products (Lemma \ref{tensorlemma}).

\begin{example}
In the case of the $n$-dimensional earring space $\bbe_n=\sw_{k\in\bbn}S^n$, Theorem \ref{maintheorem} gives that the subgroup $\ds\mcw_{2n-1}(\bbe_n)$ of $\pi_{2n-1}(\bbe_n)$ is canonically isomorphic to \[\prod_{j=1}^{\infty}\left(\pi_{n}(S^n)\otimes \prod_{k>j}\pi_{n}(S^n)\right)\cong \prod_{j=1}^{\infty}\left(\bbz\otimes \prod_{k>j}\bbz\right)\cong \prod_{j=1}^{\infty} \prod_{k>j}\bbz,\] which is abstractly isomorphic to the Baer-Specker group $\bbz^{\bbn}$. However, the representation $\prod_{j=1}^{\infty} \prod_{k>j}\bbz$ is most insightful since it illustrates a ``standard form" for elements of $\mcw_{2n-1}(\bbe_n)$. Namely, if $\ell_j:S^n\to \bbe_n$ is the inclusion of the $j$-th sphere, then every element of $\mcw_{2n-1}(\bbe_n)$ may be uniquely represented by a map of the form $\sum_{j=1}^{\infty}\left[\ell_j,\sum_{k>j}m_{j,k}\ell_k\right]$ for integers $m_{j,k}$ indexed over pairs $(j,k)$ where $k>j\geq 1$.
\end{example}

The techniques developed in \cite{BrazSequentialnconn} provide a method for deforming an arbitrary map $f:S^m\to \sw_{j\in\bbn}X_j$ into one with much simpler structure. When combined with the insights from the current paper, it appears likely that there are, in fact, no unaccounted-for elements of $\pi_{2n-1}(\bbe_n)$. Indeed, it is not difficult to visualize the required deformations in the case of $\pi_{3}(\bbe_2)$. However, new methods will be necessary in order for this intuition to be formalized. Hence, the author is prepared to make the following conjecture.

\begin{conjecture}
If $n\geq 2$ and $X=\sw_{k\in\bbn}X_k$ is a shrinking wedge of finite $(n-1)$-connected CW-complexes and $\sigma:X\to \prod_{j=1}^{\infty}X_j$ is the inclusion map, then $\mcw_{2n-1}(X)=\ker(\sigma_{\#}:\pi_{2n-1}(X)\to \pi_{2n-1}(\prod_{j=1}^{\infty}X_j))$.
\end{conjecture}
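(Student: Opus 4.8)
The plan is to leverage what Theorem \ref{maintheorem} already establishes and reduce the conjecture to a single surjectivity statement. By Corollary \ref{inkercor} we have $\mcw_{2n-1}(X)\subseteq \ker(\sigma_\#)$, and by Theorem \ref{maintheorem} the homomorphism $\Phi$ is injective with image exactly $\mcw_{2n-1}(X)$. Combining this with the identification $\ker(\sigma_\#)\cong \pi_{2n}(\prod_{j}X_j,X)$ coming from the long exact sequence of the pair \cite{Kawamurasuspensions}, the conjecture is equivalent to the reverse inclusion $\ker(\sigma_\#)\subseteq \mcw_{2n-1}(X)$, i.e.\ to showing that every relative class is carried by an infinite sum of wedge-disjoint Whitehead products. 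So the entire problem is to prove surjectivity of $\Phi$ onto $\ker(\sigma_\#)$.

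The conceptual backbone I would use is the dimension count that already governs the finite case. A weight-$w$ iterated Whitehead bracket of classes in $\pi_n$ lands in degree $w(n-1)+1$, which equals $2n-1$ only when $w=2$ (using $n\geq 2$); equivalently, in a finite wedge the quotient $\prod_{j=1}^{m}X_j/\bigvee_{j=1}^{m}X_j$ splits as a wedge of smash products $\bigwedge_{j\in S}X_j$ over subsets $S$, and only the two-element subsets contribute in degree $2n$, since a smash of three or more $(n-1)$-connected spaces is $(3n-1)$-connected. Thus in this one critical degree there are no higher-order interaction terms, and every relative class of $(\prod_{j=1}^{m}X_j,\bigvee_{j=1}^{m}X_j)$ is a sum of pairwise Whitehead products via Blakers--Massey together with the Hurewicz isomorphism $\pi_{2n}(X_j\wedge X_k)\cong \pi_n(X_j)\otimes\pi_n(X_k)$ \cite{WhiteheadEOH}.

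To transport this to the shrinking wedge, I would represent a class $[f]\in\ker(\sigma_\#)$ by $f\colon S^{2n-1}\to X$ and apply the sequential $n$-connectivity deformations of \cite{BrazSequentialnconn} to put $f$ into a normal form: an infinite concatenation of pieces, each factoring through a finite sub-wedge $\bigvee_{j\in F_i}X_j$, with the images shrinking to the basepoint as $i\to\infty$. On each finite piece the finite-wedge analysis above expresses that piece, modulo single-summand contributions, as a finite sum of wedge-disjoint Whitehead products; the hypothesis $[f]\in\ker(\sigma_\#)$ is exactly what forces the single-summand contributions to vanish after projection to each $X_j$, so only genuine cross-terms remain. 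Finally I would reassemble the resulting infinite family of pairwise brackets into one element of the target product using the infinite-sum identities of Section \ref{sectionidentity}---particularly the rearrangement and pinching moves codified in Theorem \ref{mainidentitylemma}---and match it to $\mcw_{2n-1}(X)$ through the isomorphism of Theorem \ref{maintheorem}, with Lemma \ref{tensorlemma} keeping the tensor/product bookkeeping exact.

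The hard part will be the infinite coherence of this reduction. The deformations of \cite{BrazSequentialnconn} control a single map into the shrinking wedge, but here one must perform the finite-wedge splitting simultaneously across infinitely many overlapping sub-wedges and guarantee that the ``no weight $\geq 3$'' phenomenon persists uniformly in the limit, where $\prod_{j}X_j/\sw_{j}X_j$ is no longer a wedge of finite smash products and could in principle harbor wild infinite-smash limit elements. Ruling these out, and simultaneously controlling the convergence of the assembled sum so that the normal form is genuinely a single well-defined element of $\prod_{j=1}^{\infty}\left(\pi_n(X_j)\otimes\prod_{k>j}\pi_n(X_k)\right)$, is precisely where the existing techniques stop short; I expect this step to require the genuinely new methods the author anticipates, with the finiteness of the $X_j$ remaining essential throughout.
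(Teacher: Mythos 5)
You are attempting a statement that the paper itself leaves open: it is stated as a Conjecture, the author explicitly says that ``new methods will be necessary'' to formalize the intuition behind it, and no proof appears anywhere in the paper. So there is nothing of the author's to compare your argument against, and a complete proof here would resolve an open problem. Your reductions are correct as far as they go, and they match the setup the paper provides: Corollary \ref{inkercor} gives $\mcw_{2n-1}(X)\leq\ker(\sigma_{\#})$, Theorem \ref{maintheorem} gives $\mcw_{2n-1}(X)=\im(\Phi)$ with $\Phi$ injective, and via the split short exact sequence from \cite{Kawamurasuspensions} the conjecture is exactly surjectivity of $\Phi$ onto $\ker(\sigma_{\#})\cong\pi_{2n}\left(\prod_{j\in\bbn}X_j,\sw_{j\in\bbn}X_j\right)$. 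Your finite-wedge dimension count (only two-fold smash products contribute in degree $2n$, since a smash of three or more $(n-1)$-connected complexes is $(3n-1)$-connected) is likewise the standard argument behind the computation the introduction cites from \cite{WhiteheadEOH}.

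The gap is the one you yourself flag, but it is worth stressing that it is not merely the last step of your outline --- it is the entire content of the conjecture, and one intermediate claim fails as stated. You assert that $[f]\in\ker(\sigma_{\#})$ ``forces the single-summand contributions to vanish after projection to each $X_j$.'' It does not, piecewise: after putting $f$ into the normal form of \cite{BrazSequentialnconn}, the hypothesis only says that for each fixed $j$ the finitely many pieces projecting nontrivially to $X_j$ have contributions in $\pi_{2n-1}(X_j)$ whose \emph{sum} is zero; no individual contribution need vanish, and you cannot delete or correct them one piece at a time, because infinite sums of homotopy classes are not well-defined (Example \ref{archipelagoexample}) --- every cancellation must be implemented by explicit homotopies chosen coherently enough to converge to the basepoint, in the sense of the sequential homotopies of Section \ref{sectionprelim}. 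The same coherence problem infects the main step: the Blakers--Massey/Hurewicz splitting of each finite piece into pairwise Whitehead products involves choices, and you must make these choices simultaneously over infinitely many overlapping sub-wedges so that the resulting homotopies assemble into a single convergent one; the identities of Section \ref{sectionidentity}, including Theorem \ref{mainidentitylemma}, manipulate sums that are \emph{already} in Whitehead-product form and do not supply this. In short, your proposal is a sensible research program --- essentially the one the author sketches just before stating the conjecture --- together with an honest identification of where it stops, but it is not a proof, and the paper contains none.
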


As mentioned above, $\sigma_{\#}:\pi_{2n-1}(X)\to \pi_{2n-1}(\prod_{j=1}^{\infty}X_j)$ is a split epimorphism. Thus, a positive answer to this conjecture would establish that $\pi_{2n-1}(X)$ is isomorphic to \[\prod_{j=1}^{\infty}\pi_{2n-1}(X_j)\oplus\mcw_{2n-1}(X)\cong \prod_{j=1}^{\infty}\pi_{2n-1}(X_j)\oplus \prod_{j=1}^{\infty}\left(\pi_{n}(X_j)\otimes \prod_{k>j}\pi_n(X_k)\right).\]In the case of the $n$-dimensional earring space, the conjecture would imply that $\pi_{2n-1}(\bbe_n)\cong \pi_{2n-1}(S^n)^{\bbn}\oplus \bbz^{\bbn}$.

\section{Preliminaries and notation}\label{sectionprelim}

Generally, all (based) topological spaces will be assumed to be Hausdorff and the term \textit{map} will mean a (based) continuous function. Throughout, $\ui$ will denote the closed unit interval, $S^n$ will denote the unit $n$-sphere with basepoint $(1,0,0,\dots,0)$. For a based topological space $(X,x_0)$, we write $\Omega^{n}(X,x_0)$ to denote the space of relative maps $(I^n,\partial I^n)\to (X,x_0)$ and $\pi_n(X,x_0)=\{[f]\mid f\in \Omega^n(X,x_0)\}$ to denote the $n$-th homotopy group. When the basepoint is clear from context, we may simplify this notation to $\Omega^n(X)$ and $\pi_n(X)$. By fixing a based homeomorphism $I^n/\partial I^n\cong S^n$, we identify $\Omega^{n}(X)$ with the space of based maps $S^n\to X$. We only consider higher homotopy groups in this paper and so we generally assume $n\geq 2$. 

For $f\in \Omega^{n}(X)$, we let $-f$ denote the reverse map given by $-f(t_1,t_2,\dots,t_n)=f(1-t_1,t_2,\dots,t_n)$ so that $-[f]=[-f]$. We write $f_1+f_2+\dots+f_m$ for the usual first-coordinate concatenation of maps $f_i\in \Omega^n(X)$ so that $[\sum_{i=1}^{m}f_i]=\sum_{i=1}^{m}[f_i]$ in $\pi_n(X)$. If $m$ is a positive integer, then $m f$ denotes the $m$-fold sum $f+f+\cdots +f$, $0f$ denotes the constant map, and if $m$ is a negative integer, then $m f$ denotes the $(-m)$-fold sum $(-f)+(-f)+\cdots+(-f)$.

For an ordered pair of natural numbers $(p,q)\in\bbn^2$, we view $S^p\vee S^q$ naturally as a subspace of $S^p\times S^q$. Let $a_{p,q}:S^{p+q-1}\to S^p\vee S^q$ be a fixed attaching map of the unique $(p+q)$-cell in $S^p\times S^q$. If $f:S^p\to X$ and $g:S^q\to X$ are based maps, we refer to the composition $[ f,g]=(f\vee g)\circ a_{p,q}:S^{p+q-1}\to X$ as the \textit{Whitehead product of the maps $f$ and $g$}. The bracket operation $\Omega^p(X)\times \Omega^q(X)\to \Omega^{p+q-1}(X)$ on maps descends to a well-defined bracket operation $\pi_p(X)\times \pi_q(X)\to \pi_{p+q-1}(X)$, $(\alpha,\beta)\mapsto [\alpha,\beta]$ on homotopy classes, which we also refer to as the Whitehead product.

\begin{remark}\label{relations}
It is well-known that Whitehead products satisfy a variety of relations. For based space $X$, integers $p,q,r\geq 2$, and elements $a,a'\in\pi_p(X)$, $b,b'\in \pi_q(X)$, and $c\in \pi_r(X)$, the following relations hold, c.f. \cite[Chp. X.7]{WhiteheadEOH}.
\begin{enumerate}
\item (identity) $[a,0]=[0,a]=0$,
\item (bilinearity) $[a+a',b]=[a,b]+[a',b]$ and $[a,b+b']=[a,b]+[a,b']$,
\item (graded symmetry) $[a,b]=(-1)^{pq}[b,a]$,
\item (graded Jacobi identity) \[(-1)^{pr}[[a,b],c]+(-1)^{pq}[[b,c],a]+(-1)^{rq}[[c,a],b]=0.\]
\end{enumerate}
\end{remark}

The bilinearity of Whitehead products ensures that, for based space $X$, there is a natural homomorphism $\pi_p(X)\otimes \pi_q(X)\to \pi_{p+q-1}(X)$ that maps a simple tensor $\alpha\otimes \beta$ to the Whitehead product $[\alpha,\beta]$.

\begin{remark}\label{inducedmapremark}
If $h:X\to Y$ is a map, $f\in\Omega^p(X)$, and $g\in \Omega^q(X)$, then $h\circ[ f,g]=[ h\circ f,h\circ g]$. Thus, if $h_{\#}:\pi_{p+q-1}(X)\to \pi_{p+q-1}(Y)$ is the induced homomorphism, then $h_{\#}([\alpha,\beta])=[h_{\#}(\alpha),h_{\#}(\beta)]$ on homotopy classes.
\end{remark}

\subsection{Infinite sums in higher homotopy groups}

\begin{definition}
Let $J$ be a countably infinite set and $\{f_j\}_{j\in J}$ be a $J$-indexed collection of maps $f_j:W_j\to X$ of topological spaces. 
\begin{enumerate}
\item We say that the collection $\{f_j\}_{j\in J}$ \textit{clusters at} $x_0\in X$ if for every neighborhood $U$ of $x_0$, we have $\im(f_j)\subseteq U$ for all but finitely many $j\in J$. Moreover, if $J=\bbn$, we say that $\{f_j\}_{j\in \bbn}$ \textit{converges to} $x_0$.
\item Let $\Omega^{n}_{J}(X,x_0)$ denote the set of $J$-indexed collections $\{f_j\}_{j\in J}$ where $f_j\in \Omega^n(X,x_0)$ for all $j\in J$ and such that $\{f_j\}_{j\in J}$ clusters at $x_0$.
\item A sequence $\{\alpha_j\}_{j\in\bbn}$ where $\alpha_j\in \pi_n(X,x_0)$ for all $j\in \bbn$ is said to be \textit{summable} if there exists $\{f_j\}_{j\in \bbn}\in\loopn_{\bbn}(X,x_0)$ such that $\alpha_j=[f_j]$ for all $j\in\bbn$.
\end{enumerate}
\end{definition}

\begin{definition}
The \textit{infinite concatenation} (or \textit{infinite sum} if $n\geq 2$) of a sequence $\{f_j\}_{j\in \bbn}\in \loopn_{\bbn}(X,x_0)$ is the map $\sum_{j=1}^{\infty}f_j\in \loopn(X,x_0)$ defined as $f_j$ on $\left[\frac{j-1}{j},\frac{j}{j+1}\right]\times I^{n-1}$ for all $j\in\bbn$ and which maps $\{1\}\times I^{n-1}$ to $x_0$.
\end{definition}

Note that $\im(\sum_{j=1}^{\infty}f_j)=\bigcup_{j\in\bbn}\im(f_j)$ and if $h:X\to Y$ is a based map, then $h\circ \sum_{j=1}^{\infty}f_j=\sum_{j=1}^{\infty}h\circ f_j$.

\begin{remark}
We avoid writing infinite sums of homotopy classes, i.e. $\sum_{j=1}^{\infty}[f_j]$, since there exist spaces admitting sequences $\{f_j\}_{j\in\bbn},\{g_j\}_{j\in\bbn}\in \Omega^{n}_{\bbn}(X,x_0)$ such that $[f_j]=[g_j]$ for all $j\in\bbn$ but such that $\sum_{j=1}^{\infty}[f_j]\neq \sum_{j=1}^{\infty}[g_j]$ (see Example \ref{archipelagoexample}). Hence, the function $\Omega_{n}^{\bbn}(X,x_0)\mapsto \pi_n(X,x_0)$ given by $\{f_j\}_{j\in\bbn}\mapsto \left[\sum_{j=1}^{\infty}f_j\right]$ is well-defined, but the corresponding operation $\{\alpha_j\}_{j\in\bbn}\mapsto \sum_{j=1}^{\infty}\alpha_j$ on summable sequences of homotopy classes may not be well-defined. 

The identities in this paper will therefore be stated in terms of homotopies of representing maps. For example, one could make sense of an identity  $\sum_{j=1}^{\infty}[\alpha_j,\beta_j]=(-1)^{pq}\sum_{j=1}^{\infty}[\alpha_j,\beta_j]$ for summable sequences $\{\alpha_j\}_{j\in\bbn}$ and $\{\beta_j\}_{j\in\bbn}$ of homotopy classes. However, the value of both the left and right sides of this equality depends on a choice of a sequence of representing maps. Thus, we represent this identity more carefully by saying that there is a homotopy $\sum_{j=1}^{\infty}[f_j,g_j]\simeq(-1)^{pq}\sum_{j=1}^{\infty}[f_j,g_j]$ for any sequences $\{f_j\}_{j\in\bbn}$ and $\{g_j\}_{j\in\bbn}$ in $\loopn_{\bbn}(X,x_0)$. Moreover, we will often keep track of the images of the homotopies required for such a relation. This leads us to the next definition.
\end{remark}

\begin{definition}
Let $f,g:X\to Y$ be based maps of topological spaces. A based homotopy $H:X\times \ui\to Y$ from $f$ to $g$ is an \textit{image-relative homotopy} if $\im(H)=\im(f)\cup \im(g)$. When such a homotopy exists, we say the maps $f$ and $g$ are \textit{image-relative homotopic} and we write $f\simeqir g$.
\end{definition}

Image-relative homotopy $\simeqir$ is a reflexive and symmetric relation on the set of based maps $X\to Y$ that is much finer than the usual based homotopy equivalence relation $\simeq$. Note that if $c,f,f',g,g'\in \Omega^n(X)$ where $c$ is the constant map, then
\begin{enumerate}
\item $f+c\simeqir f\simeqir c+f$,
\item $f+(-f)\simeqir c\simeqir (-f)+f$,
\item if $f\simeq_{ir}f'$ and $g\simeq_{ir}g'$, then $f+g\simeq_{ir}f'+g'$.
\end{enumerate}

\begin{remark}\label{weaktransitiveremark}
Although, $\simeqir$ is not transitive, if $f\simeqir g$ and $g\simeqir h$ where $\im(g)\subseteq \im(f)\cup \im(h)$, then $f\simeqir h$.
\end{remark}

\subsection{Infinite compositions of homotopies}

\begin{definition}
Let $\{f_j\}_{j\in\bbn}$ and $\{g_j\}_{j\in\bbn}$ be elements of $\loopn_{\bbn}(X,x_0)$. A \textit{sequential homotopy} from $\{f_j\}_{j\in\bbn}$ to $\{g_j\}_{j\in\bbn}$ is a sequence $\{H_j\}_{j\in\bbn}$ of maps $H_j:S^n\times \ui\to X$ that converges to $x_0$ and where $H_j$ is a based homotopy from $f_j$ to $g_j$ for each $j\in\bbn$. If such a sequence exists, we say that $\{f_j\}_{j\in\bbn}$ and $\{g_j\}_{j\in\bbn}$ are \textit{sequentially homotopic} and we write $\{f_j\}_{j\in\bbn}\simeq\{g_j\}_{j\in\bbn}$. If $g_j$ is constant at $x_0$ for all $j\in\bbn$, we say $\{f_j\}_{j\in\bbn}$ is sequentially \textit{sequentially null-homotopic} and call $\{H_j\}_{j\in\bbn}$ a \textit{sequential null-homotopy}.
\end{definition}

Sequential homotopy is an equivalence relation on $\Omega^{n}_{\bbn}(X,x_0)$. We refer to the equivalence class of $\{f_j\}_{j\in\bbn}$ as its \textit{sequential homotopy class}.

\begin{remark}[Infinite horizontal concatenation]\label{infconcatenationremark}
If $\{H_j\}_{j\in\bbn}$ is a sequential homotopy from $\{f_j\}_{j\in\bbn}$ to $\{g_j\}_{j\in\bbn}$ (where each is an element of $\loopn_{\bbn}(X,x_0)$), then we may define a homotopy $H:I^n\times \ui\to X$ to be $H_j$ on $\left[\frac{j-1}{j},\frac{j}{j+1}\right]\times I^{n-1}\times I$ and so that $H(\{1\}\times I^{n-1}\times I)=x_0$. The map $H$ is continuous and defines a based homotopy $\sum_{j=1}^{\infty}H_j$ from $\sum_{j=1}^{\infty}f_j$ to $\sum_{j=1}^{\infty}g_j$, which we refer to as the \textit{infinite horizontal concatenation} of the sequence $\{H_j\}_{j\in\bbn}$.
\end{remark}

The infinite horizontal concatenation construction leads to the following, which we will often use without reference.

\begin{proposition}\label{irsumsprop}
Suppose $\{f_j\}_{j\in \bbn}$ and $\{g_j\}_{j\in \bbn}$ are elements of $\loopn_{\bbn}(X,x_0)$.
\begin{enumerate}
\item if $\{f_j\}_{j\in\bbn}\simeq\{g_j\}_{j\in\bbn}$, then $\sum_{j=1}^{\infty}f_j\simeq\sum_{j=1}^{\infty}g_j$,
\item if $\{f_j\}_{j\in\bbn}$ is sequentially null-homotopic, then $\sum_{j=1}^{\infty}f_j$ is null-homotopic,
\item if $f_j\simeqir g_j$ for all $j\in\bbn$, then $\{f_j\}_{j\in\bbn}$ and $\{g_j\}_{j\in\bbn}$ are sequentially homotopic and $\sum_{j=1}^{\infty}f_j\simeqir\sum_{j=1}^{\infty}g_j$.
\end{enumerate}
\end{proposition}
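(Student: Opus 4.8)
The plan is to derive all three statements from the infinite horizontal concatenation construction of Remark \ref{infconcatenationremark}, whose only input is a sequential homotopy, i.e. a sequence $\{H_j\}_{j\in\bbn}$ of based homotopies that converges to $x_0$. The recurring theme is that the hypotheses needed for that construction are governed entirely by images, so the image-bookkeeping built into $\simeqir$ is exactly what is needed both to verify convergence and to read off image-relativity of the output.

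For (1), a sequential homotopy $\{H_j\}_{j\in\bbn}$ from $\{f_j\}_{j\in\bbn}$ to $\{g_j\}_{j\in\bbn}$ is provided by hypothesis, so Remark \ref{infconcatenationremark} immediately yields the based homotopy $\sum_{j=1}^{\infty}H_j$ from $\sum_{j=1}^{\infty}f_j$ to $\sum_{j=1}^{\infty}g_j$; nothing further is required. Statement (2) is then the special case of (1) in which each $g_j$ is the constant map at $x_0$: a sequential null-homotopy is precisely a sequential homotopy to the constant sequence, and the infinite concatenation of constant maps is itself the constant map, so the homotopy from (1) exhibits $\sum_{j=1}^{\infty}f_j$ as null-homotopic.

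The substance is in (3). First I would choose, for each $j\in\bbn$, an image-relative homotopy $H_j$ from $f_j$ to $g_j$, so that $\im(H_j)=\im(f_j)\cup\im(g_j)$. To see that $\{H_j\}_{j\in\bbn}$ is a sequential homotopy, I must check it converges to $x_0$: given a neighborhood $U$ of $x_0$, convergence of both $\{f_j\}_{j\in\bbn}$ and $\{g_j\}_{j\in\bbn}$ gives $\im(f_j)\subseteq U$ and $\im(g_j)\subseteq U$ for all but finitely many $j$, whence $\im(H_j)\subseteq U$ for all but finitely many $j$. This already establishes that $\{f_j\}_{j\in\bbn}$ and $\{g_j\}_{j\in\bbn}$ are sequentially homotopic. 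Applying Remark \ref{infconcatenationremark} to $\{H_j\}_{j\in\bbn}$ then produces a based homotopy $\sum_{j=1}^{\infty}H_j$ from $\sum_{j=1}^{\infty}f_j$ to $\sum_{j=1}^{\infty}g_j$, and it remains only to verify image-relativity. Since each $f_j$ is based, $x_0\in\im(f_j)\subseteq\im(H_j)$, so the basepoint collar contributes nothing new and $\im(\sum_{j=1}^{\infty}H_j)=\bigcup_{j\in\bbn}\im(H_j)$. Combining $\im(H_j)=\im(f_j)\cup\im(g_j)$ with the identity $\im(\sum_{j=1}^{\infty}f_j)=\bigcup_{j\in\bbn}\im(f_j)$ (and likewise for the $g_j$), I would compute $\im(\sum_{j=1}^{\infty}H_j)=\bigcup_{j\in\bbn}(\im(f_j)\cup\im(g_j))=\im(\sum_{j=1}^{\infty}f_j)\cup\im(\sum_{j=1}^{\infty}g_j)$, which is exactly the image-relative condition.

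The only genuine obstacle is the convergence check for $\{H_j\}_{j\in\bbn}$ in (3): an arbitrary choice of based homotopies $H_j$ from $f_j$ to $g_j$ need not cluster at $x_0$, and then the concatenation would fail to be continuous along $\{1\}\times I^{n-1}$. It is precisely the image constraint of $\simeqir$ --- that $\im(H_j)$ is no larger than $\im(f_j)\cup\im(g_j)$ --- that both forces the convergence of $\{H_j\}_{j\in\bbn}$ and delivers image-relativity of the concatenated homotopy. This is why the hypothesis of (3) must be the image-relative relation rather than mere based homotopy.
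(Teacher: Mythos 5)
Your proposal is correct and follows essentially the same route as the paper's proof: (1) directly from the infinite horizontal concatenation of Remark \ref{infconcatenationremark}, (2) as a special case, and (3) by choosing homotopies $H_j$ with $\im(H_j)=\im(f_j)\cup\im(g_j)$, deducing convergence of $\{H_j\}_{j\in\bbn}$ from the convergence of $\{f_j\}_{j\in\bbn}$ and $\{g_j\}_{j\in\bbn}$, and then computing $\im\left(\sum_{j=1}^{\infty}H_j\right)=\bigcup_{j\in\bbn}\im(H_j)=\im\left(\sum_{j=1}^{\infty}f_j\right)\cup\im\left(\sum_{j=1}^{\infty}g_j\right)$. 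Your extra observation that $x_0\in\im(f_j)$ makes the basepoint set $\{1\}\times I^{n-1}\times I$ contribute nothing new to the image is a small detail the paper leaves implicit, but the argument is the same.
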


\begin{proof}
(1) follows from the infinite horizontal concatenation construction and (2) is a special case of (1). For (3), note if $f_j\simeqir g_j$ for all $j\in\bbn$, then we may choose each based homotopy $H_j$ from $f_j$ to $g_j$ so that $\im(H_j)=\im(f_j)\cup \im(g_j)$. Since $\{f_j\}_{j\in\bbn}$ and $\{g_j\}_{j\in\bbn}$ both converge to $x_0$ and $\im(H_j)=\im(f_j)\cup \im(g_j)$ for all $j$, the sequence $\{H_j\}_{j\in\bbn}$ converges to $x_0$. Hence, $\{f_j\}_{j\in\bbn}$ and $\{g_j\}_{j\in\bbn}$ are sequentially homotopic. Finally, since $\im(H)=\bigcup_{j\in\bbn}\im(H_j)=\bigcup_{j\in\bbn}\im(f_j)\cup \im(g_j)=\im(\sum_{j=1}^{\infty}f_j)\cup \im(\sum_{j=1}^{\infty}g_j)$, the two sums are image-relative homotopic.
\end{proof}

\begin{remark}[Infinite vertical concatenation]\label{infverticalremark}
We will also be confronted by a situation which involves a sequence $f_j\simeq g_j+f_{j+1}$, $j\in\bbn$ of recursive homotopy relations in $\Omega^n(X,x_0)$ realized by a sequential homotopy $\{L_j\}_{j\in\bbn}$ from $\{f_j\}_{j\in\bbn}$ to $\{g_j+f_{j+1}\}_{j\in\bbn}$. We define a homotopy $G$ from $f_1$ to $\sum_{j=1}^{\infty}g_j$ piecewise on the sets $A_j=I^{n}\times \left[\frac{j-1}{j},\frac{j}{j+1}\right]$ and so that $G({\bf s},1)= \left(_{j=1}^{\infty}\right)({\bf s})$. In particular, on $A_1$, $G$ is defined to be the homotopy $L_1$ from $f_{1}$ to $g_1+ f_2$. On $A_k$, $k\geq 2$, $G$ is defined to be the homotopy from $g_1+ (g_2+ (g_3(\cdots (g_{k-1}+ f_{k}))))$ to $ g_1+ (g_2+ (g_3(\cdots g_{k-1}+ ( g_k+ f_{k+1} ))))$, which is the constant homotopy on the domains of $g_1,\dots,g_{k-1}$ and which is $L_{k}$ on the domain of $f_k$. Composing $G$ with a reparameterization homotopy $g_1+ (g_2+ (g_3+\cdots ))\simeq \sum_{j=1}^{\infty}g_j$ (which can be defined explicitly or using Remark \ref{infassocremark} below), we obtain $f_1\simeq\sum_{j=1}^{\infty}g_j$ by a homotopy $G'$ with $\im(G')=\bigcup_{k=1}^{\infty}\im(L_k)$. We refer to $G'$ as an \textit{infinite vertical concatenation of homotopies}. Note that if each $L_j$ is an image-relative homotopy, then so is $G$.
\end{remark}

\subsection{Infinite commutativity identities}

We recall several ``infinite-commutativity" identities, all of which are derived from an infinite higher-cube-shuffling result that first appeared in \cite{EK00higher} and was reformulated for broader application in \cite{Brazasncubeshuffle}. The term \textit{$n$-cube} will refer to sets of the form $\prod_{i=1}^{n}[a_i,b_i]$ in $\ui^n$. An \textit{$n$-domain} is a countable collection of $n$-cubes in $\ui^n$ with pairwise-disjoint interiors. If $\scrr$ is an $n$-domain and $\{f_R\}_{R\in\scrr}$ is an $\scrr$-indexed collection of maps $f_R\in\Omega^n(X,x_0)$ that clusters at $x_0$, then the \textit{$\scrr$-concatenation} of $\{f_R\}_{R\in\scrr}$ is the map $\prod_{R\in\scrr}f_R$ which is defined to be $f_R$ on $R$ and which maps $\ui^n\backslash\bigcup_{R\in \scrr}\int(R)$ to $x_0$.

\begin{theorem}\cite[Theorem 1.1]{Brazasncubeshuffle}\label{cubeshuffletheorem}
Let $\scrr$ and $\scrs$ be $n$-domains and $\phi:\scrs\to\scrr$ be a bijection. If $\{f_R\}_{R\in\scrr}$ and $\{g_S\}_{S\in\scrs}$ are collections of elements in $\Omega^n(X,x_0)$ that cluster at $x_0$ and such that $f_R=g_{S}$ whenever $\phi(S)=R$, then we have $\prod_{R\in\scrr}f_R\simeqir \prod_{S\in\scrs}g_S$.
\end{theorem}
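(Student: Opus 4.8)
\emph{The plan is to reduce the statement to a single canonical-form lemma and then to build the required image-relative homotopy by a ``motion of cubes'' argument, using the infinite-concatenation machinery of Remarks \ref{infconcatenationremark} and \ref{infverticalremark} to glue infinitely many local moves together.}

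First I would relabel. Enumerate $\scrr=\{R_k\}_{k\in\bbn}$, set $h_k:=f_{R_k}$, and put $S_k:=\phi^{-1}(R_k)$, so that $g_{S_k}=h_k$ and both $\prod_{R\in\scrr}f_R$ and $\prod_{S\in\scrs}g_S$ are placements of the \emph{same} clustering family $\{h_k\}_{k\in\bbn}$ on $n$-cubes (constant off the cubes). Fix once and for all a ``standard'' $n$-domain $\{C_k\}_{k\in\bbn}$ with $C_k\subseteq[\tfrac{k-1}{k},\tfrac{k}{k+1}]\times I^{n-1}$, and let $N$ be the canonical concatenation placing $h_k$ on $C_k$. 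Since $\simeqir$ is an equivalence relation, it suffices to prove one reduction lemma: \emph{for any enumerated $n$-domain with a clustering family of maps, the concatenation placing $h_k$ on $R_k$ is image-relative homotopic to $N$.} Applying this to $\scrr$ and to $\scrs$ (enumerated by $\{S_k\}$, with $g_{S_k}=h_k$) gives $\prod_{R\in\scrr}f_R\simeqir N\simeqir\prod_{S\in\scrs}g_S$, because the two canonical forms are literally the \emph{same} map $N$.

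The observation that makes image-relativity automatic is the \textbf{motion principle}: if a homotopy $H\colon I^n\times I\to X$ is obtained by moving the indexing cubes continuously inside $I^n$ along a path of pairwise-disjoint embeddings while carrying each $h_k$ rigidly (and sending the complement of the moving cubes to $x_0$), then $\im(H)=\bigcup_k\im(h_k)\cup\{x_0\}=\im(\prod_{R\in\scrr}f_R)$, so $H$ is image-relative by construction. Thus the reduction lemma becomes the purely geometric assertion that $\{R_k\}$ can be carried to $\{C_k\}$ through pairwise-disjoint configurations by a \emph{continuous} ambient motion. Its geometric heart is the finite case: for finitely many disjoint $n$-cubes in a cube with $n\ge 2$, any two configurations are joined by an isotopy fixing the boundary and passing through disjoint configurations (the configuration space of disjoint labelled cubes is path-connected for $n\ge 2$)---this is exactly the classical ``cube shuffle'' underlying commutativity of $\pi_n$, and for $n=1$ it genuinely fails, so the hypothesis $n\ge 2$ is essential.

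The main obstacle is \textbf{continuity when infinitely many cubes move at once}, in particular at the points where the cubes cluster and along the final slice $t=1$; this is precisely where the clustering hypothesis is indispensable. I would control it using the defining feature of clustering: for every neighborhood $U$ of $x_0$, all but finitely many $h_k$ have image inside $U$. I would then organize the motion in stages so that, relative to any such $U$, only the finitely many cubes with $\im(h_k)\not\subseteq U$ undergo a macroscopic routed motion---carried out by a tame finite isotopy as in the finite case and kept away from the clustering region---while the entire tail stays inside $U$ throughout and hence contributes only values in $U$ to $H$. Assembling the stages as an infinite vertical concatenation (Remark \ref{infverticalremark}) with supports and images shrinking toward $x_0$, and gluing moves localized in disjoint slabs by infinite horizontal concatenation (Remark \ref{infconcatenationremark} and Proposition \ref{irsumsprop}(3)), should yield a single continuous homotopy whose limiting behavior at the cluster points is the constant map at $x_0$. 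The remaining bookkeeping---choosing disjoint routings for the finitely many ``large'' cubes at each stage and verifying that the shrinking supports force continuity at $t=1$ and at accumulation points---is the technical core; once it is in place, image-relativity is free by the motion principle.
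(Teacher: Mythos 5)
This paper never proves Theorem \ref{cubeshuffletheorem}: it is imported as a citation from \cite[Theorem 1.1]{Brazasncubeshuffle}, so your proposal can only be measured against the cited paper's strategy, which it does resemble in outline --- reduce both concatenations to a common canonical placement, realize the reduction by a motion of cubes so that image-relativity comes for free, and tame the infinitude of moves with the clustering hypothesis. However, two steps fail as literally written. The ``motion principle'' with cubes ``carried rigidly'' cannot be right: the cubes $R_k$ and $C_k$ generally have different edge lengths, so carriage must be by affine reparameterization; worse, the interiors of the cubes of an $n$-domain may have union dense in $\ui^n$ (e.g.\ a dyadic tiling), in which case there is no room at all for ``pairwise-disjoint routings,'' and your fixed-size configuration-space claim is false as stated (two large cubes in a tight box cannot exchange positions through disjoint configurations). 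Any correct execution needs a preliminary ``pinching'' homotopy shrinking each $f_R$ into a small concentric subcube of $R$ --- itself continuous only by a further appeal to clustering --- and this step is absent from your plan.

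Second, the deferred ``bookkeeping'' is the theorem, and your control mechanism is not yet sufficient. The infinite vertical concatenation of Remark \ref{infverticalremark} is formulated for the special recursive pattern $f_j\simeq g_j+f_{j+1}$, where continuity at $t=1$ holds because the still-moving block shrinks toward a face of $\ui^n$; it does not apply off the shelf to stage-wise motions whose supports need not shrink in the domain. Moreover, ``the entire tail stays inside $U$'' does not by itself give continuity at the final slice: if a cube carrying a fixed non-small map $h_k$ keeps being nudged as an obstacle at stages arbitrarily close to $t=1$, then points of $\int(C_k)$ are uncovered at times tending to $1$ and take the value $x_0$, while the limiting value is $h_k(a)\neq x_0$, and continuity fails. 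You need a scheduling discipline: each cube settles permanently at a finite stage, settled cubes are never again displaced (pairwise-disjointness then protects their interiors), and at stage $j$ only cubes of index greater than $j$ may be moved as obstacles; with this, for any neighborhood $U$ of $x_0$, all motion after stage $\max\{k \mid \im(h_k)\not\subseteq U\}$ takes values in $U$, which is what actually forces continuity at $t=1$ and at accumulation points of the moving cubes. These repairs are available, but without them your proposal is an outline of the argument in \cite{Brazasncubeshuffle} rather than a proof.
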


A first application of Theorem \ref{cubeshuffletheorem} tells us that infinite sums (up to image-relative homotopy) behave like absolutely convergent series in that their homotopy classes do not depend on the order in which their summands are added.

\begin{corollary}\label{infcomm1}\cite{Brazasncubeshuffle}
If $\{f_j\}_{j\in\bbn}$ is an element of $\Omega^{n}_{\bbn}(X,x_0)$ and $\phi:\bbn\to\bbn$ is any bijection, then $\sum_{j=1}^{\infty}f_j\simeqir \sum_{j=1}^{\infty}f_{\phi(j)}$.
\end{corollary}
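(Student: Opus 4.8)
The plan is to recognize both infinite sums as $\scrr$-concatenations over a single $n$-domain and then apply the cube-shuffling Theorem \ref{cubeshuffletheorem} with a bijection built from $\phi$. By the definition of infinite concatenation, the map $\sum_{j\in\bbn}f_j$ is equal to $f_j$ on the $n$-cube $R_j=\left[\frac{j-1}{j},\frac{j}{j+1}\right]\times I^{n-1}$ and sends the complement $\ui^n\setminus\bigcup_j\int(R_j)$ to $x_0$. Setting $\scrr=\{R_j\}_{j\in\bbn}$, which is a countable family of $n$-cubes with pairwise-disjoint interiors and hence an $n$-domain, and putting $f_{R_j}=f_j$, I would observe that $\sum_{j\in\bbn}f_j=\prod_{R\in\scrr}f_R$. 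The reindexed sum uses the very same cubes in the very same positions, only with relabeled summands, so taking $\scrs=\scrr$ as a second copy of the domain and $g_{R_j}=f_{\phi(j)}$ gives $\sum_{j\in\bbn}f_{\phi(j)}=\prod_{S\in\scrs}g_S$.

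Next I would produce the bijection required by Theorem \ref{cubeshuffletheorem}. Define $\psi:\scrs\to\scrr$ by $\psi(R_j)=R_{\phi(j)}$; this is a bijection precisely because $\phi:\bbn\to\bbn$ is one. The hypothesis of the theorem asks that $f_R=g_S$ whenever $\psi(S)=R$, and this is immediate from the construction: for $S=R_j$ we have $f_{\psi(R_j)}=f_{R_{\phi(j)}}=f_{\phi(j)}=g_{R_j}$. With this matching in place, the theorem yields $\prod_{R\in\scrr}f_R\simeqir\prod_{S\in\scrs}g_S$, which is exactly the desired image-relative homotopy $\sum_{j\in\bbn}f_j\simeqir\sum_{j\in\bbn}f_{\phi(j)}$.

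The only hypothesis of Theorem \ref{cubeshuffletheorem} that is not purely formal is that \emph{both} indexed families cluster at $x_0$. The family $\{f_R\}_{R\in\scrr}$ clusters by assumption, since $\{f_j\}_{j\in\bbn}\in\Omega^n_{\bbn}(X,x_0)$. For $\{g_S\}_{S\in\scrs}$ I would note that clustering is a property of the \emph{set} of images together with their multiplicities and is therefore invariant under bijective reindexing: for any neighborhood $U$ of $x_0$, all but finitely many of the $\im(f_j)$ lie in $U$, and applying the bijection $\phi$ leaves the cofinite condition intact, so all but finitely many $\im(f_{\phi(j)})=\im(g_{R_j})$ lie in $U$ as well.

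I do not expect a genuine obstacle here: the cube-shuffling theorem carries all of the topological content, and the corollary amounts to unwinding the definition of infinite concatenation and selecting the correct correspondence of cubes. The one point meriting a sentence of care is the clustering of the reindexed family, which is what licenses the application of Theorem \ref{cubeshuffletheorem}; everything else is bookkeeping, and the graded-symmetry or associativity style identities play no role in this particular statement.
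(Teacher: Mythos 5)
Your proof is correct and matches the paper's route exactly: the corollary is stated there as a direct application of Theorem \ref{cubeshuffletheorem} (cited to \cite{Brazasncubeshuffle}), and your argument just spells out the bookkeeping of realizing both sums as $\scrr$-concatenations over the cubes $R_j=\left[\tfrac{j-1}{j},\tfrac{j}{j+1}\right]\times I^{n-1}$ with the bijection $R_j\mapsto R_{\phi(j)}$. Your extra care about clustering of the reindexed family is exactly the right (and only nontrivial) hypothesis to verify.
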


We also wish to make sense of a sum $\sum_{j\in J}f_j$ when $J$ is a more general linearly ordered set. For a given countable linearly ordered set $(J,\leq)$, we fix a collection $\{(a_j,b_j)\}_{j\in J}$ of pairwise-disjoint open intervals in $\ui$ such that $a_j<a_{j'}$ whenever $j<j'$ in $J$. If $\{f_j\}_{j\in J}\in \loopn_{J}(X,x_0)$, we define $\sum_{j\in J}f_j$ to be the map $f:(\ui^n,\partial \ui^n)\to (X,x_0)$, which is $f_j$ on $[a_j,b_j]\times\ui^{n-1}$ and which maps $(\ui\backslash \bigcup_{j\in J}[a_j,b_j])\times\ui^{n-1}$ to $x_0$.

\begin{corollary}\label{infcomm2}\cite{Brazasncubeshuffle}
Let $J$ be a countable linearly ordered set. If $\{f_j\}_{j\in J}$ is an element of $\Omega^{n}_{J}(X,x_0)$ and $\phi:\bbn\to J$ is any bijection, then $\sum_{j\in J}f_j\simeqir \sum_{i=1}^{\infty}f_{\phi(i)}$.
\end{corollary}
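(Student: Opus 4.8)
The plan is to realize both infinite sums as concatenations indexed by $n$-domains and then invoke the cube-shuffling theorem (Theorem~\ref{cubeshuffletheorem}) directly. First I would observe that $\sum_{j\in J}f_j$ is, by its very construction, the $\scrr$-concatenation $\prod_{R\in\scrr}f_R$ for the $n$-domain $\scrr=\{[a_j,b_j]\times \ui^{n-1}\}_{j\in J}$: these cubes have pairwise-disjoint interiors precisely because the intervals $(a_j,b_j)$ are pairwise disjoint, the map placed on $[a_j,b_j]\times \ui^{n-1}$ (after the affine rescaling of the first coordinate) is $f_j$, and on the complement of the union of the open cubes the map is constant at $x_0$. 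The last point uses that each $f_j\in\loopn(X,x_0)$ is based, so it already agrees with $x_0$ on the shared cube boundaries, making the two clauses ``$f_R$ on $R$'' and ``$x_0$ off the interiors'' consistent. In the same way, $\sum_{i\in\bbn}f_{\phi(i)}$ is the $\scrs$-concatenation $\prod_{S\in\scrs}f_S$ for the standard $n$-domain $\scrs=\{[\tfrac{i-1}{i},\tfrac{i}{i+1}]\times \ui^{n-1}\}_{i\in\bbn}$, with $f_{\phi(i)}$ placed on the $i$-th cube and the leftover face $\{1\}\times \ui^{n-1}$ sent to $x_0$.

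Next I would use $\phi$ to transport indices: define $\Psi:\scrs\to\scrr$ by sending the $i$-th standard cube $S_i=[\tfrac{i-1}{i},\tfrac{i}{i+1}]\times \ui^{n-1}$ to $R_{\phi(i)}=[a_{\phi(i)},b_{\phi(i)}]\times \ui^{n-1}$. Since $\phi$ is a bijection and the cubes of each domain are indexed bijectively by $\bbn$ and $J$, the map $\Psi$ is a bijection of $n$-domains. The matching hypothesis of Theorem~\ref{cubeshuffletheorem} then holds on the nose, since the map carried by $S_i$ is $f_{\phi(i)}$ and the map carried by $\Psi(S_i)=R_{\phi(i)}$ is also $f_{\phi(i)}$.

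It remains to verify the clustering hypothesis for both families. The family $\{f_j\}_{j\in J}$ clusters at $x_0$ by the assumption $\{f_j\}_{j\in J}\in\loopn_{J}(X,x_0)$. Clustering is a condition on the set of images and is insensitive to reindexing: for any neighborhood $U$ of $x_0$ we have $\im(f_j)\subseteq U$ for all but finitely many $j\in J$, hence $\im(f_{\phi(i)})\subseteq U$ for all but finitely many $i\in\bbn$, so $\{f_{\phi(i)}\}_{i\in\bbn}$ clusters at $x_0$ as well. With both hypotheses in place, Theorem~\ref{cubeshuffletheorem} yields $\prod_{S\in\scrs}f_S\simeqir\prod_{R\in\scrr}f_R$, which is exactly the claimed relation $\sum_{i\in\bbn}f_{\phi(i)}\simeqir\sum_{j\in J}f_j$.

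I expect no genuine obstacle here: the argument is essentially an application of the cube-shuffling theorem, and the only point needing care is the bookkeeping that identifies the two explicitly-defined sums with the abstract $\scrr$- and $\scrs$-concatenations, where one confirms that the boundary behavior matches (both send cube boundaries and the face $\{1\}\times \ui^{n-1}$ to $x_0$). It is worth noting that the order structure of $J$ plays no role in the proof itself---it enters only in fixing the placement of the intervals $(a_j,b_j)$ when $\sum_{j\in J}f_j$ is defined---because Theorem~\ref{cubeshuffletheorem} permits an arbitrary bijection between the two $n$-domains.
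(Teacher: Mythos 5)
Your proof is correct and is exactly the intended derivation: the paper states this corollary without proof, citing \cite{Brazasncubeshuffle}, and presents it as a direct consequence of Theorem \ref{cubeshuffletheorem}, which is precisely your argument (both sums are on-the-nose $\scrr$- and $\scrs$-concatenations, the bijection $S_i\mapsto R_{\phi(i)}$ matches the maps, and clustering is reindexing-invariant). Nothing is missing.
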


\begin{remark}[Infinite Associativity]\label{infassocremark}
Theorem \ref{cubeshuffletheorem} ensures that the choice of the intervals $(a_j,b_j)$ in the definition of $\sum_{j\in J}f_j$ does not change $\sum_{j\in J}f_j$ up to image-relative homotopy. This implies ``infinite associativity for infinite sums." For example, the following maps are all homotopic to each other by image-relative homotopies. We refer to such homotopies as \textit{reparameterization homotopies}.
\begin{enumerate}
\item $\sum_{j=1}^{\infty}f_j$,
\item $\sum_{k\in\bbn}(f_{2k-1}+f_{2k})=(f_1+f_2)+(f_3+f_4)+(f_5+f_6)+\cdots$,
\item $f_1+(f_2+(f_3+(f_4+\cdots \,\,\cdots)))$.
\end{enumerate}
The use of more general linearly ordered indexing sets also gives a clear way to define sums of the form $\sum_{k>j}f_{j,k}$ for natural numbers $j$ and $k$. Formally, this sum is indexed by the suborder $\{(j,k)\in\bbn^2\mid k>j\}$ of $\bbn^2$ with the lexicographical ordering.
\end{remark}

\begin{lemma}\label{negationlemma}
If $J$ is a countable linearly ordered set and $\{f_j\}_{j\in J}\in\loopjn(X,x_0)$, then $-\sum_{j\in J}f_j\simeq_{ir} \sum_{j\in J}-f_j$.
\end{lemma}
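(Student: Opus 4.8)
The plan is to unwind the definition of the negation $-(\cdot)$ on a concatenated map and observe that $-\sum_{j\in J}f_j$ is, on the nose, a reparameterized concatenation of the individually reversed summands $-f_j$, placed on the \emph{reflected} subintervals of $\ui$. Once this identification is made precise, both sides of the asserted relation become $\scrr$-concatenations of the \emph{same} indexed family $\{-f_j\}_{j\in J}$ over two different $n$-domains, so the statement follows at once from the cube-shuffling Theorem \ref{cubeshuffletheorem}. Equivalently, one may read $-\sum_{j\in J}f_j$ as the sum of the family $\{-f_j\}$ taken over the reversed order $J^{\mathrm{op}}$ and then invoke the reordering Corollary \ref{infcomm2}; I would present the cube-shuffle version as the primary argument since it avoids relabeling by a bijection with $\bbn$.

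First I would fix the pairwise-disjoint open intervals $\{(a_j,b_j)\}_{j\in J}$ defining $\sum_{j\in J}f_j$, so that this map is the reparameterization of $f_j$ on $[a_j,b_j]\times\ui^{n-1}$ that is constant at $x_0$ off $\bigcup_{j\in J}(a_j,b_j)\times\ui^{n-1}$. The central computation is to chase coordinates through the reflection $t_1\mapsto 1-t_1$ defining $-f$. Since $-f(t_1,\dots,t_n)=f(1-t_1,t_2,\dots,t_n)$, a point with $t_1\in[1-b_j,1-a_j]$ has $1-t_1\in[a_j,b_j]$, and a short linear check shows that the affine reparameterization carrying $[1-b_j,1-a_j]$ onto $[0,1]$ sends the first coordinate $s\mapsto 1-s$; consequently the restriction of $-\sum_{j\in J}f_j$ to $[1-b_j,1-a_j]\times\ui^{n-1}$ becomes exactly $-f_j$. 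Thus $-\sum_{j\in J}f_j$ is precisely the concatenation placing $-f_j$ on the reflected cube $[1-b_j,1-a_j]\times\ui^{n-1}$ and sending the complement of $\bigcup_{j\in J}\int([1-b_j,1-a_j]\times\ui^{n-1})$ to $x_0$. I would also note that $\{-f_j\}_{j\in J}$ clusters at $x_0$, since $\im(-f_j)=\im(f_j)$, so both concatenations are genuinely defined elements of $\loopn(X,x_0)$.

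With this in hand, set $\scrs=\{[a_j,b_j]\times\ui^{n-1}\}_{j\in J}$ to be the $n$-domain for $\sum_{j\in J}(-f_j)$ and $\scrr=\{[1-b_j,1-a_j]\times\ui^{n-1}\}_{j\in J}$ to be the $n$-domain for $-\sum_{j\in J}f_j$; both are genuine $n$-domains because the reflected intervals remain pairwise disjoint. Let $\phi:\scrs\to\scrr$ be the bijection sending the cube indexed by $j$ to the cube indexed by $j$. By the previous paragraph the map carried by each matched pair of cubes is the \emph{same} element $-f_j\in\Omega^n(X,x_0)$, so the hypotheses of Theorem \ref{cubeshuffletheorem} hold and it yields $-\sum_{j\in J}f_j\simeqir\sum_{j\in J}(-f_j)$, which is the claim.

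I expect the only genuine obstacle to be the bookkeeping in the coordinate computation: one must verify that the reflection produces \emph{precisely} $-f_j$ after the standard reparameterization, not merely something homotopic to it, because Theorem \ref{cubeshuffletheorem} requires the matched maps to be literally equal. Everything after this identification is a direct appeal to the infinite-commutativity machinery already in place.
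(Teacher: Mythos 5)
Your proof is correct, but it takes a genuinely different route from the paper's. The paper never identifies $-\sum_{j\in J}f_j$ geometrically; it argues algebraically: after invoking Theorem \ref{cubeshuffletheorem} to reduce to $J=\bbn$ and to obtain $\sum_{j}f_j+\sum_{j}(-f_j)\simeq \sum_{j}\bigl(f_j+(-f_j)\bigr)$ inside $Y=\bigcup_{j}\im(f_j)$, it null-homotopes the latter via an infinite horizontal concatenation of the pairwise cancellation homotopies $f_j+(-f_j)\simeqir c$, and then concludes by the uniqueness-of-inverses manipulation $-\sum_j f_j\simeqir -\sum_j f_j+\bigl(\sum_j f_j+\sum_j(-f_j)\bigr)\simeqir \sum_j(-f_j)$, all within $Y$. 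You instead unwind the reflection $t_1\mapsto 1-t_1$ and show that $-\sum_{j\in J}f_j$ is \emph{on the nose} the concatenation of $\{-f_j\}_{j\in J}$ over the reflected cubes $[1-b_j,1-a_j]\times\ui^{n-1}$; your coordinate check is right (setting $t_1=1-b_j+u(b_j-a_j)$ gives $\frac{(1-t_1)-a_j}{b_j-a_j}=1-u$, so the reparameterized restriction is exactly $-f_j$, which is precisely the literal equality Theorem \ref{cubeshuffletheorem} demands), the reflected cubes have pairwise-disjoint interiors, the complement of their interiors is sent to $x_0$, and $\{-f_j\}$ clusters at $x_0$ since $\im(-f_j)=\im(f_j)$ — so a single application of the cube-shuffle theorem finishes, and it directly outputs an image-relative homotopy. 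Your version is shorter and more self-contained: it avoids the reduction to $J=\bbn$ and the bookkeeping of infinitely many null-homotopies. The paper's version buys independence from coordinate formulas and exhibits a cancellation pattern it reuses later (the same $-a+(a+b)$ manipulation reappears in Lemma \ref{whnegationlemma} for $[-f,g]\simeqir -[f,g]$, where no on-the-nose identification is available), which is presumably why the author preferred it.
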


\begin{proof}
Applying Theorem \ref{cubeshuffletheorem} to both sums in the statement of the lemma, we may assume that $J=\bbn$. Theorem \ref{cubeshuffletheorem} also implies that $\sum_{j\in \bbn}f_j+\sum_{j\in \bbn}-f_j$ is homotopic to $\sum_{j\in J}(f_j+(-f_j))$ in $Y=\bigcup_{j\in\bbn}\im(f_j)$. Since $f_j+(-f_j)\simeqir c$ where $c$ is the constant map, we fix such null-homotopies and take their infinite horizontal concatenation. Hence, $\sum_{j\in J}(f_j+(-f_j))$ is null-homotopic in $Y$. It follows that $\sum_{j\in \bbn}f_j+\sum_{j\in \bbn}-f_j$ is null-homotopic in $Y$. Also, $-\sum_{j\in \bbn}f_j+\sum_{j\in \bbn}f_j$ is null-homotopic in $Y$. Hence, $-\sum_{j\in \bbn}f_j$ is image-relative homotopic to the first map in the following chain of image-relative homotopies.
\[-\sum_{j\in \bbn}f_j+\left(\sum_{j\in \bbn}f_j+\sum_{j\in \bbn}-f_j\right)\simeqir\left(-\sum_{j\in \bbn}f_j+\sum_{j\in \bbn}f_j\right)+\sum_{j\in \bbn}-f_j\simeqir \sum_{j\in \bbn}-f_j\]
Since all maps in this chain have the same image, the weak notion of transitivity from Remark \ref{weaktransitiveremark} applies to give $-\sum_{j\in \bbn}f_j\simeq_{ir} \sum_{j\in \bbn}-f_j$.
\end{proof}

Doubly-indexed infinite sums also fall within the scope of Theorem \ref{cubeshuffletheorem}.

\begin{lemma}\label{doublesumlemma}
Suppose $n\geq 2$, $J,K$ are countable linearly ordered sets, and $J\times K$ is given the lexicographical ordering. If $\{f_{j,k}\}_{(j,k)\in J\times K}$ is a doubly-indexed collection in $\loopn_{J\times K}(X,x_0)$, then 
\[\sum_{j\in J}\left(\sum_{k\in K}f_{j,k}\right)\simeq_{ir}\sum_{k\in K}\left(\sum_{j\in J}f_{j,k}\right).\]
\end{lemma}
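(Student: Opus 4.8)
The plan is to present both sides as $n$-domain concatenations of the single clustering family $\{f_{j,k}\}_{(j,k)\in J\times K}$ and then apply the cube-shuffling result, Theorem \ref{cubeshuffletheorem}, to the tautological reindexing bijection; no separate appeal to Corollary \ref{infcomm2} is then needed, as the cube-shuffle theorem already subsumes reordering. Before that, I would verify that all four nested sums are legitimate elements of $\loopn(X,x_0)$, i.e. that the relevant subfamilies cluster at $x_0$. Since $\{f_{j,k}\}_{(j,k)\in J\times K}\in\loopn_{J\times K}(X,x_0)$, for every neighborhood $U$ of $x_0$ the set of pairs $(j,k)$ with $\im(f_{j,k})\not\subseteq U$ is finite. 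In particular, for each fixed $j$ all but finitely many $k\in K$ satisfy $\im(f_{j,k})\subseteq U$, so $\{f_{j,k}\}_{k\in K}$ clusters and $\sum_{k\in K}f_{j,k}$ is defined; moreover only finitely many $j$ admit some $k$ with $\im(f_{j,k})\not\subseteq U$, so for all but finitely many $j$ we get $\im\bigl(\sum_{k\in K}f_{j,k}\bigr)=\bigcup_{k\in K}\im(f_{j,k})\subseteq U$. Hence $\{\sum_{k\in K}f_{j,k}\}_{j\in J}$ clusters and the left-hand side is defined; the symmetric argument handles the right-hand side.

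Next I would unwind the nested concatenation. For a fixed choice of the interval systems governing the two levels of summation, the composite first-coordinate reparametrizations are affine, so $\sum_{j\in J}\bigl(\sum_{k\in K}f_{j,k}\bigr)$ is, on the nose, the map that equals an affine reparametrization of $f_{j,k}$ on a subcube $R_{j,k}=[a_{j,k},b_{j,k}]\times\ui^{n-1}$ and sends the rest of $\ui^n$ to $x_0$. The cubes $\{R_{j,k}\}$ are pairwise interior-disjoint, being already disjoint in the first coordinate, so $\scrr=\{R_{j,k}\}_{(j,k)}$ is an $n$-domain, the family $\{f_R\}_{R\in\scrr}$ with $f_{R_{j,k}}=f_{j,k}$ clusters at $x_0$ by the paragraph above, and the left-hand side is precisely the $\scrr$-concatenation $\prod_{R\in\scrr}f_R$. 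Identically, the right-hand side is the $\scrs$-concatenation $\prod_{S\in\scrs}g_S$, where $\scrs=\{S_{j,k}\}_{(j,k)}$ records summing $k$ before $j$ and $g_{S_{j,k}}=f_{j,k}$; the only difference between $\scrr$ and $\scrs$ is the placement and width of the subcubes along the first coordinate, reflecting the two lexicographic orders. (Should one prefer not to fix matching interval systems, Remark \ref{infassocremark} supplies image-relative reparametrization homotopies identifying each side with these concatenations.)

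Finally I would apply Theorem \ref{cubeshuffletheorem} to the bijection $\phi:\scrs\to\scrr$ given by $\phi(S_{j,k})=R_{j,k}$. Since $f_{R_{j,k}}=f_{j,k}=g_{S_{j,k}}$, we have $f_R=g_S$ whenever $\phi(S)=R$, which is exactly the hypothesis of the theorem; it yields $\prod_{R\in\scrr}f_R\simeqir\prod_{S\in\scrs}g_S$, the asserted identity. I expect the only real content to lie in the clustering bookkeeping—ensuring every iterated sum is a genuine map of $\loopn(X,x_0)$—together with the observation that a nested concatenation is literally an $n$-domain concatenation; once both sides are displayed as concatenations of the same clustering family, Theorem \ref{cubeshuffletheorem} supplies the rest.
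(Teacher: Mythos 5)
Your proposal is correct and matches the paper's intended argument: the paper states this lemma with no written proof beyond the remark that doubly-indexed sums ``fall within the scope of Theorem \ref{cubeshuffletheorem},'' which is precisely your strategy of exhibiting both nested sums as $n$-domain concatenations of the same clustering family and shuffling via the tautological bijection. Your explicit verification that the iterated sums cluster at $x_0$, and the fallback to Remark \ref{infassocremark} for mismatched interval systems, simply fill in bookkeeping the paper leaves implicit.
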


A relevant special case is the following linearity condition, which follows from Lemmas \ref{negationlemma} and \ref{doublesumlemma}.

\begin{corollary}\label{linearityofsumscor}
Let $J$ be a countable linearly ordered set and $\{f_j\}_{j\in J}$ and $\{g_j\}_{j\in J}$ be elements of $\loopjn(X,x_0)$. If  $k_j,m_j$ are integers for all $j\in J$, then \[\sum_{j\in J}k_jf_j+m_jg_j\simeq_{ir}k_j\sum_{j\in J}f_j+m_j\sum_{j\in J}g_j.\]
\end{corollary}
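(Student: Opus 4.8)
The plan is to exhibit both sides of the claimed relation as the two iterated orders of a single doubly-indexed infinite sum and then to invoke Lemma \ref{doublesumlemma} directly, reserving Lemma \ref{negationlemma} only for absorbing the signs that arise from negative coefficients.

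First I would fix the two-element ordered index set $K=\{1,2\}$ (with $1<2$) and set $h_{j,1}=k_jf_j$ and $h_{j,2}=m_jg_j$ for each $j\in J$, where the integer multiples are the finite concatenations defined in Section \ref{sectionprelim}. Before applying any lemma I must verify that $\{h_{j,k}\}_{(j,k)\in J\times K}$ is a legitimate element of $\loopn_{J\times K}(X,x_0)$, that is, that it clusters at $x_0$. This is the one genuine check: since $\im(k_jf_j)\subseteq\im(f_j)$ and $\im(m_jg_j)\subseteq\im(g_j)$ (reversal and finite concatenation never enlarge the image, and the $0$-fold term is constant at $x_0$), the hypothesis that $\{f_j\}_{j\in J}$ and $\{g_j\}_{j\in J}$ cluster at $x_0$ forces each of $\{h_{j,1}\}_{j\in J}$ and $\{h_{j,2}\}_{j\in J}$ to cluster; as $K$ is finite, the union of two finite exceptional sets is finite, so the combined family clusters as well and every infinite sum appearing in the statement is defined.

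With the setup in place, the two iterated sums are, up to reparameterization (Remark \ref{infassocremark}), exactly the two sides of the relation: $\sum_{j\in J}\sum_{k\in K}h_{j,k}=\sum_{j\in J}(k_jf_j+m_jg_j)$ is the left-hand side, while $\sum_{k\in K}\sum_{j\in J}h_{j,k}=\sum_{j\in J}k_jf_j+\sum_{j\in J}m_jg_j$ is the right-hand side. Lemma \ref{doublesumlemma} then yields the image-relative homotopy between them. Lemma \ref{negationlemma} enters only to reconcile sign conventions: when $k_j<0$ the block $k_jf_j$ is a concatenation of copies of $-f_j$, and negationlemma guarantees that interpreting the corresponding summand as the negation of $|k_j|f_j$ inside the infinite sum gives the same class up to $\simeq_{ir}$, so that the negative-coefficient blocks are treated on the same footing as the positive ones.

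I do not expect a serious obstacle, since the content is entirely a rearrangement already subsumed by Theorem \ref{cubeshuffletheorem} through Lemma \ref{doublesumlemma}. The only point requiring care, and the step I would actually write out, is the clustering verification above, as it is precisely what licenses treating the finite integer-multiple concatenations $k_jf_j$ and $m_jg_j$ as single summands of a doubly-indexed family instead of re-expanding them.
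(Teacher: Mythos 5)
Your proposal is correct and takes essentially the same route as the paper, which derives this corollary in a single line from Lemmas \ref{negationlemma} and \ref{doublesumlemma} --- exactly the two ingredients you invoke, with Lemma \ref{doublesumlemma} applied to the family $\{h_{j,k}\}$ indexed by $J\times\{1,2\}$ (read with the lexicographic orderings in the two possible orders) doing the real work. Your clustering verification, the reparameterization appeal to Remark \ref{infassocremark}, and the observation that Lemma \ref{negationlemma} serves only to reconcile the sign convention for negative coefficients are precisely the details the paper leaves implicit.
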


\section{Combining Infinite Sums and Whitehead Products}\label{sectionidentity}

We wish to apply the standard relations in Remark \ref{relations} infinitely many times within a single step of algebraic reasoning. To do so, we show that each one is realized by an image-relative homotopy. Certainly, this could be achieved by analyzing the original proofs of these relations, c.f. \cite{whiteheadjhc,whiteheadGWproducts}. However, since we are already assuming the standard relations hold (particularly, in wedges of spheres), we can avoid restating the classical proofs by analyzing the formula for the Whitehead bracket on maps. Throughout this subsection, $X$ will denote a topological space with basepoint $x_0$ and we assume that $p,q,r\geq 2$.

\begin{proposition}\label{welldefinedprop}
Let $f_1,f_2\in\Omega^p(X)$, $g_1,g_2\in\Omega^q(X)$, $H:S^p\times\ui\to X$ be a based homotopy from $f_1$ to $f_2$, and $G:S^q\times\ui\to X$ be a based homotopy from $g_1$ to $g_2$. Then $[ f_1,g_1] \simeq [ f_2,g_2] $ by a based homotopy with image equal to $\im(H)\cup \im(G)$. In particular, if $f_1\simeqir f_2$ and $g_1\simeqir g_2$, then $[ f_1,g_1] \simeqir [ f_2,g_2] $.
\end{proposition}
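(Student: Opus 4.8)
The plan is to exploit the explicit formula $[f,g]=(f\vee g)\circ a_{p,q}$ for the Whitehead product. Since the attaching map $a_{p,q}:S^{p+q-1}\to S^p\vee S^q$ is fixed once and for all, any homotopy of the pair $(f,g)$ should be realizable by first assembling a homotopy of the wedge maps $f\vee g$ and then precomposing with $a_{p,q}$.

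Concretely, first I would combine $H$ and $G$ into a single homotopy of wedge maps. Define $F:(S^p\vee S^q)\times\ui\to X$ by $F(x,t)=H(x,t)$ for $x\in S^p$ and $F(y,t)=G(y,t)$ for $y\in S^q$. Because $H$ and $G$ are \emph{based} homotopies, both send the basepoint to $x_0$ at every time $t$, so the two formulas agree on the wedge point and $F$ is well-defined and continuous. By construction $F$ is a based homotopy from $f_1\vee g_1$ to $f_2\vee g_2$, and its image is exactly $\im(H)\cup\im(G)$.

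Next I would precompose with the attaching map. Setting $K(z,t)=F(a_{p,q}(z),t)$ gives a based homotopy $K:S^{p+q-1}\times\ui\to X$ with $K(\cdot,0)=(f_1\vee g_1)\circ a_{p,q}=[f_1,g_1]$ and $K(\cdot,1)=[f_2,g_2]$. The key point for the image computation is that $a_{p,q}$ is \emph{surjective} onto $S^p\vee S^q$: writing $S^{p+q-1}=\partial(D^p\times D^q)=(S^{p-1}\times D^q)\cup(D^p\times S^{q-1})$ and recalling that $a_{p,q}$ is the restriction of the product $q_p\times q_q$ of the collapse maps $q_p:D^p\to S^p$ and $q_q:D^q\to S^q$, the first piece covers $\{x_0\}\times S^q$ and the second covers $S^p\times\{x_0\}$, so together their image is all of $S^p\vee S^q$. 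Consequently $\im(K)=F\big((S^p\vee S^q)\times\ui\big)=\im(F)=\im(H)\cup\im(G)$, proving the first assertion.

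For the ``in particular'' clause, I would choose $H$ and $G$ to be image-relative, so $\im(H)=\im(f_1)\cup\im(f_2)$ and $\im(G)=\im(g_1)\cup\im(g_2)$. Since the same surjectivity of $a_{p,q}$ gives $\im([f_i,g_i])=(f_i\vee g_i)(S^p\vee S^q)=\im(f_i)\cup\im(g_i)$, we obtain $\im(K)=\im(H)\cup\im(G)=\im([f_1,g_1])\cup\im([f_2,g_2])$, which is exactly the condition for $K$ to witness $[f_1,g_1]\simeqir[f_2,g_2]$. I do not anticipate a serious obstacle here; the only points requiring care are the well-definedness of $F$ on the wedge (guaranteed by the basedness of $H$ and $G$) and the surjectivity of $a_{p,q}$, which is what forces the image to be \emph{equal} to $\im(H)\cup\im(G)$ rather than merely contained in it.
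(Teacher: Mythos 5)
Your proposal is correct and follows essentially the same route as the paper: both glue $H$ and $G$ into a based homotopy of wedge maps $f_1\vee g_1\simeq f_2\vee g_2$ with image $\im(H)\cup\im(G)$, precompose with $a_{p,q}\times id_{\ui}$, and invoke the surjectivity of $a_{p,q}$ to get equality (not just containment) of images. Your write-up merely fills in two details the paper leaves implicit, namely the verification that $a_{p,q}$ is onto via the decomposition $S^{p+q-1}=(S^{p-1}\times D^q)\cup(D^p\times S^{q-1})$ and the explicit image computation for the image-relative clause.
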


\begin{proof}
There is a based homotopy $J:(S^p\vee S^q)\times\ui\to X$ from $f_1\vee g_1$ to $f_2\vee g_2$ with image equal to $\im(H)\cup \im (G)$. Let $K=J\circ (a_{p,q}\times id_{\ui})$ where $a_{p,q}:S^{p+q-1}\to S^p\vee S^q$ is the attaching map defined previously and note that $K$ is a based homotopy from $[ f_1,g_1]$ to $[ f_2,g_2]$. Since $a_{p,q}\times id_{\ui}$ is onto, we have $\im(K)=\im(H)\cup \im (G)$. The second statement follows immediately from the first.
\end{proof}

Since we may glue the terms of a sequential homotopy into a single homotopy via horizontal concatenation, we have the following consequence.

\begin{corollary}\label{irwhiteheadcor}
Suppose $\{H_j\}_{j\in\bbn}$ is a sequential homotopy from $\{f_j\}_{j\in\bbn}$ to $\{f_{j}'\}_{j\in\bbn}$, where $\{f_j\}_{j\in\bbn},\{f_{j}'\}_{j\in\bbn}$ are elements of $\Omega^{p}_{\bbn}(X,x_0)$ and $\{G_j\}_{j\in\bbn}$ is a sequential homotopy from $\{g_j\}_{j\in\bbn}$ to $\{g_{j}'\}_{j\in\bbn}$ where $\{g_j\}_{j\in\bbn},\{g_{j}'\}_{j\in\bbn}$ are elements of $\Omega^{q}_{\bbn}(X,x_0)$. Then $\{[ f_j,g_j]\}_{j\in\bbn}$ and $\{[ f_{j}',g_{j}']\}_{j\in\bbn}$ are elements of $\Omega^{p+q-1}_{\bbn}(X,x_0)$ and we have
\[\sum_{j=1}^{\infty}[ f_j,g_j] \simeq \sum_{j=1}^{\infty}[ f_j',g_j']\] by a homotopy in $\bigcup_{j\in\bbn}\im(H_j)\cup \im(G_j)$. In particular, if $f_j\simeqir f_{j'}$ and $g_j\simeq_{ir} g_{j}'$ for all $j\in\bbn$, then $\ds\sum_{j=1}^{\infty}[ f_j,g_j] \simeqir \sum_{j=1}^{\infty}[ f_j',g_j']$.
\end{corollary}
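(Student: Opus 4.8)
The plan is to reduce the statement to the single-bracket case handled by Proposition \ref{welldefinedprop} and then assemble infinitely many of the resulting homotopies using the infinite horizontal concatenation from Proposition \ref{irsumsprop}.

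First I would confirm the clustering claim. Since $[f_j,g_j]=(f_j\vee g_j)\circ a_{p,q}$ and $a_{p,q}$ is onto $S^p\vee S^q$, we have $\im([f_j,g_j])=\im(f_j)\cup\im(g_j)$. Because $\{f_j\}_{j\in\bbn}$ and $\{g_j\}_{j\in\bbn}$ each converge to $\xo$, every neighborhood $U$ of $\xo$ contains $\im(f_j)\cup\im(g_j)$ for all but finitely many $j$; hence $\{[f_j,g_j]\}_{j\in\bbn}$ converges to $\xo$ and lies in $\Omega^{p+q-1}_{\bbn}(X,\xo)$. The identical argument handles $\{[f_j',g_j']\}_{j\in\bbn}$.

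Next, for each fixed $j$ I would apply Proposition \ref{welldefinedprop} to $H_j\colon f_j\simeq f_j'$ and $G_j\colon g_j\simeq g_j'$, producing a based homotopy $K_j\colon [f_j,g_j]\simeq[f_j',g_j']$ with $\im(K_j)=\im(H_j)\cup\im(G_j)$. The crucial point is that $\{K_j\}_{j\in\bbn}$ is again a sequential homotopy: since $\{H_j\}$ and $\{G_j\}$ are sequential homotopies, both sequences converge to $\xo$, and the image identity $\im(K_j)=\im(H_j)\cup\im(G_j)$ then forces $\{K_j\}$ to converge to $\xo$ as well. Applying Proposition \ref{irsumsprop}(1) to $\{K_j\}$ yields $\sum_{j=1}^{\infty}[f_j,g_j]\simeq\sum_{j=1}^{\infty}[f_j',g_j']$ by a homotopy with image $\bigcup_{j\in\bbn}\im(K_j)=\bigcup_{j\in\bbn}\left(\im(H_j)\cup\im(G_j)\right)$, which is the asserted bound.

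For the final image-relative statement, I would use the hypotheses $f_j\simeqir f_j'$ and $g_j\simeqir g_j'$ to choose $H_j$ and $G_j$ image-relative, so that $\im(H_j)=\im(f_j)\cup\im(f_j')$ and $\im(G_j)=\im(g_j)\cup\im(g_j')$. The concatenated homotopy then has image $\bigcup_{j\in\bbn}\left(\im(f_j)\cup\im(f_j')\cup\im(g_j)\cup\im(g_j')\right)$, and combining $\im([f_j,g_j])=\im(f_j)\cup\im(g_j)$ with the identity $\im(\sum_j[f_j,g_j])=\bigcup_j\im([f_j,g_j])$ (and likewise for the primed maps) identifies this set with $\im(\sum_{j}[f_j,g_j])\cup\im(\sum_{j}[f_j',g_j'])$, giving $\simeqir$. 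The whole argument is bookkeeping once Proposition \ref{welldefinedprop} is available; the only step needing genuine care is the convergence of $\{K_j\}$ to $\xo$, which is exactly why that proposition is stated with explicit control of $\im(K_j)$ rather than merely up to homotopy.
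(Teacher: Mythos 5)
Your proposal is correct and matches the paper's intended argument exactly: the paper derives this corollary in one line by applying Proposition \ref{welldefinedprop} termwise and gluing the resulting homotopies via the infinite horizontal concatenation of Remark \ref{infconcatenationremark}, which is precisely your construction of the $K_j$ and their sum. Your added verifications (surjectivity of $a_{p,q}$ giving $\im([f_j,g_j])=\im(f_j)\cup\im(g_j)$, convergence of $\{K_j\}$, and the image bookkeeping for the $\simeqir$ case) are the details the paper leaves implicit, and they are all sound.
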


\begin{proposition}[Bilinearity]\label{bilinearlityprop}
Let $f_1,f_2\in \Omega^p(X)$ and $g_1,g_2\in \Omega^q(X)$. Then 
\begin{enumerate}
\item $[ f_1+ f_2,g_1]\simeq_{ir} [ f_1,g_1]+ [ f_2,g_1]$,
\item $[ f_1,g_1+ g_2]\simeq_{ir} [ f_1,g_1]+ [ f_1,g_2]$.
\end{enumerate}
\end{proposition}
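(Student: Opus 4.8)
The plan is to deduce both identities from the classical bilinearity relation (Remark~\ref{relations}(2)) by transporting it from a universal example---a wedge of spheres, where we are already assuming the standard relations hold---along an induced map, using Remark~\ref{inducedmapremark}. The key point is that a based homotopy realizing bilinearity inside a wedge of spheres automatically becomes image-relative once it is pushed forward into $X$, since the target map $\phi$ below has image exactly the union of the images of the relevant summands.

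For statement (1), I would first set $W=S^p\vee S^p\vee S^q$ with the three summand inclusions $\iota_1,\iota_2\in\Omega^p(W)$ and $\iota_3\in\Omega^q(W)$, and form the based map $\phi=f_1\vee f_2\vee g_1:W\to X$. Since composition with a based map distributes over first-coordinate concatenation, $\phi\circ(\iota_1+\iota_2)=f_1+f_2$ and $\phi\circ\iota_3=g_1$; combining this with Remark~\ref{inducedmapremark} gives $\phi\circ[\iota_1+\iota_2,\iota_3]=[f_1+f_2,g_1]$ and $\phi\circ([\iota_1,\iota_3]+[\iota_2,\iota_3])=[f_1,g_1]+[f_2,g_1]$. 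The classical relation $[[\iota_1]+[\iota_2],[\iota_3]]=[[\iota_1],[\iota_3]]+[[\iota_2],[\iota_3]]$ holds in $\pi_{p+q-1}(W)$, so there is a based homotopy $K$ in $W$ from $[\iota_1+\iota_2,\iota_3]$ to $[\iota_1,\iota_3]+[\iota_2,\iota_3]$. Then $\phi\circ K$ is a based homotopy from $[f_1+f_2,g_1]$ to $[f_1,g_1]+[f_2,g_1]$.

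The main thing to verify---and essentially the only subtle point---is that $\phi\circ K$ is image-relative, even though $K$ itself need not be surjective onto $W$. Here I would observe that $\im(\phi\circ K)\subseteq\phi(W)=\im(f_1)\cup\im(f_2)\cup\im(g_1)$, while, because $a_{p,q}$ is onto, the two endpoint maps already satisfy $\im([f_1+f_2,g_1])=\im([f_1,g_1]+[f_2,g_1])=\im(f_1)\cup\im(f_2)\cup\im(g_1)$. Since the initial map of the homotopy $\phi\circ K$ is part of $\phi\circ K$, its image forces $\im(\phi\circ K)$ to equal the whole union, so $\im(\phi\circ K)=\im([f_1+f_2,g_1])\cup\im([f_1,g_1]+[f_2,g_1])$, which is exactly the image-relative condition. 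This shows $[f_1+f_2,g_1]\simeqir[f_1,g_1]+[f_2,g_1]$. Statement (2) follows by the symmetric argument using $W=S^p\vee S^q\vee S^q$ and $\phi=f_1\vee g_1\vee g_2$.
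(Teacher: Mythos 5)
Your proof is correct and takes essentially the same route as the paper: the paper likewise realizes the classical bilinearity relation by a based homotopy in the universal wedge $Y=S^p\vee S^p\vee S^q$ and pushes it forward along $F=f_1\vee f_2\vee g_1$, obtaining the homotopy $F\circ H$ (the second statement again by symmetry). The only difference is cosmetic: where the paper simply asserts the homotopy in $Y$ is surjective to conclude $\im(F\circ H)=\im(f_1)\cup\im(f_2)\cup\im(g_1)$, you reach the same conclusion by sandwiching $\im(\phi\circ K)$ between $\phi(W)$ and the image of an endpoint map (which already equals $\phi(W)$ since $a_{p,q}$ is onto), which is a slightly more careful justification of the same point.
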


\begin{proof}
We prove the first statement. The second is proved by a symmetric argument. Let $Y=S^p\vee S^p\vee S^q$ and let $\gamma_1,\gamma_2:S^p\to Y$ and $\delta:S^q\to Y$ be the inclusions of the wedge-summands. The standard bilinearity relation applied within $Y$ gives a map $H:S^{p+q-1}\times\ui\to Y$ that serves as a based homotopy $[ \gamma_1+ \gamma_2,\delta]\simeq [\gamma_1,\delta]+[ \gamma_2,\delta]$. Here, $[\gamma_1+ \gamma_2,\delta]=(\nabla_{p}\vee id_{S^q})\circ a_{p,q}$ where $\nabla_p:S^p\to S^p\vee S^p$ is the map used to define concatenation of $p$-loops. Also, \[[ \gamma_1,\delta]+ [ \gamma_2,\delta]=((\gamma_1\vee \delta)\vee (\gamma_2\vee \delta))\circ (a_{p,q}\vee a_{p,q})\circ \nabla_{p+q-1}.\]
Define $F=f_1\vee f_2\vee g_1$ as a map $Y\to X$. The definition of Whitehead product gives \[F\circ [ \gamma_1+ \gamma_1,\delta]= (((f_1\vee f_2)\circ\nabla_{p})\vee g_1)\circ a_{p,q}=[ f_1+ f_2,g_1].\]
Since $F\circ (\gamma_i\vee \delta)\circ a_{p,q}=[ f_i,g_1]$, we have $F\circ ([ \gamma_1,\delta]+ [ \gamma_2,\delta]) = [ f_1,g_1]+ [f_2,g_1]$. Therefore, $F\circ H$ is the desired homotopy. Moreover, since $H$ is surjective, we have $\im (F\circ H)=\im(f_1)\cup\im(f_2)\cup \im(g_1)$, 
\end{proof}

\begin{lemma}\label{bilinearitylemma1}
Let $\{f_j\}_{j\in\bbn}$, $\{f_j'\}_{j\in\bbn}$ be elements of $\Omega^{p}_{\bbn}(X,x_0)$ and $\{g_j\}_{j\in\bbn}$, $\{g_j'\}_{j\in\bbn}$ be elements of $\Omega^{q}_{\bbn}(X,x_0)$. Then
\begin{enumerate}
\item $\ds \sum_{j=1}^{\infty}[ f_j+f_{j}',g_j] \simeq_{ir}\sum_{j=1}^{\infty}[ f_j,g_j] +\sum_{j=1}^{\infty}[ f_{j}',g_j] $,
\item $\ds \sum_{j=1}^{\infty}[ f_j,g_j+g_{j}'] \simeq_{ir}\sum_{j=1}^{\infty}[ f_j,g_j] +\sum_{j=1}^{\infty}[ f_{j},g_j']$.
\end{enumerate}
\end{lemma}

\begin{proof}
By Proposition \ref{bilinearlityprop}, for each $j\in\bbn$, we have an image-relative homotopy $[ f_j+f_{j}',g_j] \simeqir [ f_j,g_j]+[f_{j}',g_j]$. The infinite horizontal concatenation of these homotopies gives image-relative homotopy $\ds \sum_{j=1}^{\infty}[ f_j+f_{j}',g_j]\simeq_{ir} \sum_{j=1}^{\infty}[ f_j,g_j] +[ f_{j}',g_j]$. Vertically composing this with an infinite commutativity homotopy $\sum_{j=1}^{\infty}[ f_j,g_j]+[ f_{j}',g_j] \simeq_{ir}\sum_{j=1}^{\infty}[ f_j,g_j] +\sum_{j=1}^{\infty}[ f_{j}',g_j] $ from Corollary \ref{linearityofsumscor} constructs the desired image-relative homotopy (recall the weak transitivity of $\simeqir$ from Remark \ref{weaktransitiveremark}). A symmetric argument gives the homotopy in (2).
\end{proof}

\begin{lemma}\label{whnegationlemma}
For all $f\in\Omega^p(X)$ and $g\in \Omega^q(X)$, any two of the following three maps are image-relative homotopic to each other: $[-f,g]$, $-[ f,g]$, and $[ f,-g]$.
\end{lemma}

\begin{proof}
We show that $[-f,g]\simeqir-[ f,g]$ holds. The second relation $[f,-g]\simeqir-[ f,g]$ follows from a symmetric argument and then the third relation $[-f,g]\simeqir[ f,-g]$ holds by the weak transitivity of $\simeqir$ described in Remark \ref{weaktransitiveremark}. 

By Proposition \ref{bilinearlityprop}, we have $[ f,g]+[ -f,g]\simeqir [ f+ (-f),g]$ and if $c$ denotes the constant map, then $c\simeqir f+ (-f)$. Thus by Proposition \ref{welldefinedprop}, we have $[f+(-f),g]\simeqir [ c,g]$. We check that $[ c,g]$ is null-homotopic in $\im(g)$. Let $\gamma:S^q\to S^p\vee S^q$ and $\delta:S^q\to S^p\vee S^q$ be the inclusion maps and $c':S^p\to S^p\vee S^q$ be the constant map. The usual identity relation from Remark \ref{relations} gives that $ [ c',\delta]=(c'\vee \delta)\circ [ \gamma,\delta]:S^{p+q-1}\to S^{p}\vee S^{q}$ is null-homotopic in $S^p\vee S^q$. Thus $[ c,g]=(c\vee g)\circ [ \gamma,\delta]=(id\vee g)\circ (c'\vee \delta)\circ[ \gamma,\delta]$ is null-homotopic in $\im(c)\cup\im(g)=\im(g)$. Vertical composition of the above homotopies shows that $[ f,g]+[ -f,g]\simeqir c$. Thus we have the following chain of image-relative homotopies: \[-[ f,g]\simeqir -[f,g]+([f,g]+[ -f,g])\simeqir(-[f,g]+[f,g])+[ -f,g]\simeqir[-f,g].\]Since all maps in this chain have the image equal to $\im(f)\cup\im(g)$, we conclude that $[-f,g]\simeqir-[ f,g]$.
\end{proof}

Combining the previous two lemmas, we have the following.

\begin{corollary}\label{linearitywhiteheadcor}
Let $J$ be a countable linearly ordered set, $\{f_j\}_{j\in J}$ be an element of $\Omega_{J}^{p}(X,x_0)$, and $\{g_j\}_{j\in J}$ be an element of $\Omega_{J}^{q}(X,x_0)$. If $\{m_j\}_{j\in J}$ is a collection of integers, then any two of the following three maps are image-relative homotopic to each other: $\ds\sum_{j\in J}m_j[f_j,g_j]$, $\ds\sum_{j \in J}[m_jf_j,g_j]$, and $\ds\sum_{j\in J}[f_j,m_jg_j]$.
\end{corollary}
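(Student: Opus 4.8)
The plan is to reduce the statement to a single index, prove it there for an arbitrary integer coefficient by assembling the finite bilinearity of Proposition \ref{bilinearlityprop} with the negation identity of Lemma \ref{whnegationlemma}, and then promote the resulting family of per-index homotopies to the infinite sums via infinite horizontal concatenation. Since Proposition \ref{irsumsprop} is phrased for $\bbn$-indexed sequences, I would first invoke Corollary \ref{infcomm2} to replace the sum over the general linearly ordered set $J$ by a sum over $\bbn$ through any bijection $\phi:\bbn\to J$; the reindexed collections still cluster at $x_0$, so it suffices to establish, for each $i\in\bbn$, the image-relative homotopy $m_{\phi(i)}[f_{\phi(i)},g_{\phi(i)}]\simeqir [m_{\phi(i)}f_{\phi(i)},g_{\phi(i)}]$ together with its analogue in the second variable.

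The core step is the single-index claim: for any integer $m$, any $f\in\Omega^p(X)$ and $g\in\Omega^q(X)$, one has $m[f,g]\simeqir[mf,g]\simeqir[f,mg]$. I would split this into three cases. For $m\geq 1$, iterating the first part of Proposition \ref{bilinearlityprop} exactly $m-1$ times gives $[mf,g]=[f+\cdots+f,g]\simeqir[f,g]+\cdots+[f,g]=m[f,g]$, and symmetrically for $[f,mg]$. For $m=0$, both $0[f,g]$ and $[0f,g]=[c_p,g]$ (with $c_p$ the constant $p$-map) are null, and I would check image-relativeness exactly as in the proof of Lemma \ref{whnegationlemma}: $[c_p,g]$ factors as $g\circ(\rho\circ a_{p,q})$ where $\rho:S^p\vee S^q\to S^q$ collapses $S^p$, and the identity relation of Remark \ref{relations} makes $\rho\circ a_{p,q}$ null-homotopic inside $S^q$; composing such a null-homotopy with $g$ yields a null-homotopy of $[c_p,g]$ with image exactly $\im(g)=\im([c_p,g])\cup\{x_0\}$. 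For $m\leq -1$, writing $mf=|m|(-f)$ and combining the positive case with Lemma \ref{whnegationlemma} (which gives $[-f,g]\simeqir-[f,g]$ image-relatively) yields $[mf,g]\simeqir|m|[-f,g]\simeqir|m|(-[f,g])=m[f,g]$.

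With the single-index homotopies in hand, I would assemble them by infinite horizontal concatenation. The point to verify is that the family of per-index homotopies clusters at $x_0$: each has image contained in $\im(f_{\phi(i)})\cup\im(g_{\phi(i)})$, because $\im([mf,g])$ and $\im(m[f,g])$ both lie in $\im(f)\cup\im(g)$, and since $\{f_j\}_{j\in J}$ and $\{g_j\}_{j\in J}$ cluster at $x_0$, so do these homotopies. Proposition \ref{irsumsprop}(3) then delivers the image-relative homotopy between the two infinite sums; running the same construction in the second variable gives the remaining relation of the chain, and reindexing back through $\phi$ via Corollary \ref{infcomm2} recovers the statement for the original order on $J$.

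The main obstacle here is bookkeeping rather than conceptual difficulty: one must confirm that every per-index homotopy is genuinely image-relative—most delicately in the $m=0$ case, where a constant summand must be image-relative homotopic to the non-constant map $[c_p,g]$ of image $\im(g)$—so that the image of the assembled homotopy matches the union of the images of the two sums and the clustering required by Proposition \ref{irsumsprop}(3) is preserved. Everything else is, per index, a finite combination of the already-established identities.
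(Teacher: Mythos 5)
Your proposal is correct and follows essentially the same route as the paper, which derives this corollary simply by ``combining the previous two lemmas'' --- i.e.\ the bilinearity identities (Proposition \ref{bilinearlityprop}/Lemma \ref{bilinearitylemma1}) and the negation identity (Lemma \ref{whnegationlemma}), applied per index and assembled by infinite horizontal concatenation, exactly as you do. Your explicit treatment of the $m_j=0$ case and the reduction from a general countable linear order $J$ to $\bbn$ via Corollary \ref{infcomm2} merely fills in details the paper leaves implicit.
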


\begin{remark}
One should not expect the following infinite version of bilinearity to hold: $[\sum_{j=1}^{\infty}f_j,g]\simeq\sum_{j=1}^{\infty}[f_j,g]$. Indeed, for the sum $\sum_{j=1}^{\infty}[f_j,g]$ to be a well-defined map, $g$ must be the constant map (if $g$ is non-constant, the sequence $\{[f_j,g]\}_{j\in\bbn}$ will not converge to the basepoint). The next two lemmas provide the appropriate versions of an infinite application of bilinearity.
\end{remark}

\begin{lemma}[Infinite Bilinearity]\label{bilinearitylemma2}
If $\{f_j\}_{j\in\bbn}$ is an element of $\Omega_{\bbn}^{p}(X,x_0)$ and $\{g_j\}_{j\in\bbn}$ is an element of $\Omega_{\bbn}^{q}(X,x_0)$, then \[\left[\sum_{j=1}^{\infty}f_j,\sum_{j=1}^{\infty}g_j \right]\simeq_{ir}\sum_{j=1}^{\infty}\left[ f_j,g_j\right] +\sum_{j=1}^{\infty}\left[ f_j, \sum_{k>j}g_k\right] +\sum_{j=1}^{\infty}\left[ \sum_{k>j}f_k,g_j\right] .\]
\end{lemma}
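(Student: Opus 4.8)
The plan is to realize the infinite bilinear expansion as a telescoping sequence of \emph{finite} bilinearity moves, peeling off one wedge-summand index at a time, and then to assemble these moves by the infinite vertical concatenation of Remark \ref{infverticalremark}. Write $F_j=\sum_{k\geq j}f_k$ and $G_j=\sum_{k\geq j}g_k$, so that $F_1=\sum_{j=1}^{\infty}f_j$, $G_1=\sum_{j=1}^{\infty}g_j$, and, up to reparameterization (Remark \ref{infassocremark}), $F_j\simeqir f_j+F_{j+1}$ and $G_j\simeqir g_j+G_{j+1}$. The crucial observation is that, although $F_{j+1}=\sum_{k>j}f_k$ and $G_{j+1}=\sum_{k>j}g_k$ are built from infinitely many pieces, each is a \emph{single} map in $\Omega^p(X)$ or $\Omega^q(X)$, so the finite bilinearity of Proposition \ref{bilinearlityprop} applies to them verbatim. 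First I would use Proposition \ref{welldefinedprop} with the above reparameterizations to replace $[F_j,G_j]$ by $[f_j+F_{j+1},\,g_j+G_{j+1}]$ up to image-relative homotopy, then apply Proposition \ref{bilinearlityprop} once in the first slot and once in the second slot of each resulting term (using the compatibility of $\simeqir$ with concatenation to operate inside the sum). This yields, for every $j$, an image-relative homotopy
\[[F_j,G_j]\simeqir [f_j,g_j]+[f_j,G_{j+1}]+[F_{j+1},g_j]+[F_{j+1},G_{j+1}].\]
Setting $g_j'=[f_j,g_j]+\left[f_j,\sum_{k>j}g_k\right]+\left[\sum_{k>j}f_k,g_j\right]$, this reads $[F_j,G_j]\simeqir g_j'+[F_{j+1},G_{j+1}]$, which is exactly the recursive shape $f_j\simeq g_j+f_{j+1}$ required to run an infinite vertical concatenation.

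To apply Remark \ref{infverticalremark}, I must check that the homotopies $L_j$ realizing these recursions form a genuine sequential homotopy, i.e. that they cluster at $x_0$. Since each $L_j$ is a composite of image-relative homotopies, its image equals $\im([F_j,G_j])=\bigcup_{k\geq j}\im(f_k)\cup\bigcup_{k\geq j}\im(g_k)$; because $\{f_k\}_{k\in\bbn}$ and $\{g_k\}_{k\in\bbn}$ converge to $x_0$, every neighborhood of $x_0$ contains all but finitely many $\im(f_k)$ and $\im(g_k)$, so $\im(L_j)\to x_0$. The same computation shows $\{[F_j,G_j]\}_{j\in\bbn}$, $\{g_j'+[F_{j+1},G_{j+1}]\}_{j\in\bbn}$, and $\{g_j'\}_{j\in\bbn}$ all lie in $\Omega^{p+q-1}_{\bbn}(X,x_0)$. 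The infinite vertical concatenation then produces a homotopy $[F_1,G_1]\simeq\sum_{j=1}^{\infty}g_j'$, and image-relativity is preserved because each $L_j$ is image-relative (final sentence of Remark \ref{infverticalremark}). I expect this convergence bookkeeping to be the main obstacle: it is precisely the point at which the naive ``pull the infinite sum out of the bracket'' fails (cf. the remark preceding the lemma) and must be replaced by this controlled, one-index-at-a-time procedure whose peeling homotopies and their tails provably shrink to the basepoint.

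Finally, I would expand $\sum_{j=1}^{\infty}g_j'=\sum_{j=1}^{\infty}\left([f_j,g_j]+\left[f_j,\sum_{k>j}g_k\right]+\left[\sum_{k>j}f_k,g_j\right]\right)$ into three separate infinite sums. This is an instance of infinite commutativity: two applications of the linearity of sums in Corollary \ref{linearityofsumscor} with all coefficients equal to $1$ (or, equivalently, a single reordering via Corollary \ref{infcomm2}) give
\[\sum_{j=1}^{\infty}g_j'\simeqir\sum_{j=1}^{\infty}[f_j,g_j]+\sum_{j=1}^{\infty}\left[f_j,\sum_{k>j}g_k\right]+\sum_{j=1}^{\infty}\left[\sum_{k>j}f_k,g_j\right].\]
Each regrouping is legitimate because the three interleaved families cluster at $x_0$, by the same convergence estimates as above, so Theorem \ref{cubeshuffletheorem} applies. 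Composing this with the homotopy from the previous step yields the asserted image-relative homotopy, completing the argument.
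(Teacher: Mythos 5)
Your proposal is correct and takes essentially the same route as the paper's proof: the identical telescoping step $[F_{\geq j},G_{\geq j}]\simeqir [f_j,g_j]+[f_j,G_{\geq j+1}]+[F_{\geq j+1},g_j]+[F_{\geq j+1},G_{\geq j+1}]$ via Propositions \ref{welldefinedprop} and \ref{bilinearlityprop}, assembled by the infinite vertical concatenation of Remark \ref{infverticalremark} (with the same image-relative convergence check on the $L_j$) and finished with the infinite-commutativity shuffle of Corollary \ref{linearityofsumscor}. The only difference --- expanding both slots before applying bilinearity rather than alternating slots as the paper does --- is immaterial.
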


\begin{proof}
Let $F_{\geq j}=\sum_{k\geq j}f_k$ and $G_{\geq j}=\sum_{k\geq j}g_k$ and note that $F_{\geq 1}=\sum_{j=1}^{\infty}f_j$ and $G_{\geq 1}=\sum_{j=1}^{\infty}g_j$. Fix $j\in\bbn$. Applying Proposition \ref{welldefinedprop} and Proposition \ref{bilinearlityprop} multiple times, we obtain the following chain of image-relative homotopies which all may be chosen to have image in $Y_j=\im(F_{\geq j})\cup \im(G_{\geq j})$. 
\begin{eqnarray*}
[ F_{\geq j},G_{\geq j}]  & \simeqir & [ f_j+ F_{\geq j+1}, G_{\geq j}] \\
 &\simeqir & [ f_j, G_{\geq j}]+[ F_{\geq j+1},G_{\geq j}]\\
 &\simeqir & [ f_j, g_j+ G_{\geq j+1}]+[ F_{\geq j+1},g_j+ G_{\geq j+1}]\\ 
 &\simeqir & [ f_j,g_j]+[ f_j, G_{\geq j+1}]+[ F_{\geq j+1},g_j] +[ F_{\geq j+1},G_{\geq j+1}]
\end{eqnarray*}
For each $j\in\bbn$, let $L_{j}$ denote the resulting image-relative homotopy from $[ F_{\geq j},G_{\geq j}]$ to $([ f_j,g_j]+[ f_j, G_{\geq j+1}]+[ F_{\geq j+1},g_j]) +[ F_{\geq j+1},G_{\geq j+1}]$. Since the sequences $\{F_{\geq j}\}_{j\in\bbn}$ and $\{G_{\geq j}\}_{j\in\bbn}$ converge to $x_0$, so does $\{L_j\}_{j\in\bbn}$. The infinite vertical concatenation (recall Remark \ref{infverticalremark}) of the sequence $\{L_j\}_{j\in\bbn}$ gives
\[[ F_{\geq 1},G_{\geq 1}]\simeqir \sum_{j=1}^{\infty}[ f_j,g_j]+[ f_j, G_{\geq j+1}] +[ F_{\geq j+1},g_j].\]
Finally, we apply an infinite commutativity homotopy from Corollary \ref{linearityofsumscor} to see that \[\sum_{j=1}^{\infty}[ f_j,g_j]+[ f_j, G_{\geq j+1}] +[ F_{\geq j+1},g_j]\simeqir \sum_{j=1}^{\infty}[ f_j,g_j] +\sum_{j=1}^{\infty}[f_j, G_{\geq j+1}] +\sum_{j=1}^{\infty}[ F_{\geq j+1},g_j].\]
Since these homotopies all have image in $Y_1$ their vertical composition gives
\[[ F_{\geq 1},G_{\geq 1}]\simeqir \sum_{j=1}^{\infty}[ f_j,g_j] +\sum_{j=1}^{\infty}[f_j, G_{\geq j+1}] +\sum_{j=1}^{\infty}[ F_{\geq j+1},g_j].\]
\end{proof}

If we perform an infinite sum (indexed over $i\in \bbn$) of the image-relative homotopies from Lemma \ref{bilinearitylemma2} and apply linearity of infinite sums (Corollary \ref{linearityofsumscor}), then we obtain the following.

\begin{corollary}\label{bilinearitycor1}
If $\{f_{i,j}\}_{(i,j)\in\bbn^2}$ is an element of $\Omega^{p}_{\bbn^2}(X,x_0)$ and $\{g_{i,j}\}_{(i,j)\in\bbn^2}$ is an element of $\Omega^{q}_{\bbn^2}(X,x_0)$, then $\ds\sum_{i\in\bbn}\left[\sum_{j=1}^{\infty}f_{i,j},\sum_{j=1}^{\infty}g_{i,j}\right]$ is image-relative homotopic to \[ \sum_{(i,j)\in\bbn^2}[f_{i,j},g_{i,j}]+
\sum_{(i,j)\in\bbn^2}\left[f_{i,j},\sum_{k>j}g_{i,k}\right]+\sum_{(i,j)\in\bbn^2}\left[\sum_{k>j}f_{i,k},g_{i,j}\right].\]
\end{corollary}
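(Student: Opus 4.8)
The plan is to run the single-index identity of Lemma \ref{bilinearitylemma2} once in each ``row'' $i$ and then glue the resulting homotopies together by horizontal concatenation. First, I would fix $i\in\bbn$ and apply Lemma \ref{bilinearitylemma2} to the sequences $\{f_{i,j}\}_{j\in\bbn}$ and $\{g_{i,j}\}_{j\in\bbn}$, obtaining an image-relative homotopy $L_i$ from $\left[\sum_{j=1}^{\infty}f_{i,j},\sum_{j=1}^{\infty}g_{i,j}\right]$ to $\sum_{j=1}^{\infty}[f_{i,j},g_{i,j}]+\sum_{j=1}^{\infty}\left[f_{i,j},\sum_{k>j}g_{i,k}\right]+\sum_{j=1}^{\infty}\left[\sum_{k>j}f_{i,k},g_{i,j}\right]$. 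Tracing the construction in that lemma (an infinite vertical concatenation of image-relative homotopies followed by image-relative commutativity homotopies), one sees that the image of $L_i$ is contained in $Y_i:=\bigcup_{j\in\bbn}\im(f_{i,j})\cup\bigcup_{j\in\bbn}\im(g_{i,j})$, since every intermediate term has image in this set.

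The crucial step is to verify that the sequence of homotopies $\{L_i\}_{i\in\bbn}$ converges to $x_0$, so that its infinite horizontal concatenation is continuous. Let $U$ be a neighborhood of $x_0$. Because $\{f_{i,j}\}_{(i,j)\in\bbn^2}$ and $\{g_{i,j}\}_{(i,j)\in\bbn^2}$ each cluster at $x_0$, there is a \emph{finite} set $S\subseteq\bbn^2$ with $\im(f_{i,j})\subseteq U$ and $\im(g_{i,j})\subseteq U$ whenever $(i,j)\notin S$. As $S$ is finite, only finitely many first coordinates occur among its elements; for every larger $i$ we have $(i,j)\notin S$ for all $j$, hence $Y_i\subseteq U$ and therefore $\im(L_i)\subseteq U$. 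This shows $\{L_i\}_{i\in\bbn}$ clusters at $x_0$, so it is a sequential homotopy, and its infinite horizontal concatenation (Remark \ref{infconcatenationremark}, Proposition \ref{irsumsprop}) yields the image-relative homotopy
\begin{equation*}
\sum_{i\in\bbn}\left[\sum_{j=1}^{\infty}f_{i,j},\sum_{j=1}^{\infty}g_{i,j}\right]\simeqir \sum_{i\in\bbn}\left(\sum_{j=1}^{\infty}[f_{i,j},g_{i,j}]+\sum_{j=1}^{\infty}\left[f_{i,j},\sum_{k>j}g_{i,k}\right]+\sum_{j=1}^{\infty}\left[\sum_{k>j}f_{i,k},g_{i,j}\right]\right).
\end{equation*}

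It then remains to rewrite the right-hand side in the stated doubly-indexed form. Applying the linearity of infinite sums (Corollary \ref{linearityofsumscor}) distributes the outer sum over $i$ across the three row-sums, producing $\sum_{i}\sum_{j}[f_{i,j},g_{i,j}]+\sum_{i}\sum_{j}\left[f_{i,j},\sum_{k>j}g_{i,k}\right]+\sum_{i}\sum_{j}\left[\sum_{k>j}f_{i,k},g_{i,j}\right]$. Finally, infinite associativity (Remark \ref{infassocremark}, equivalently a special case of Lemma \ref{doublesumlemma}) identifies each iterated sum $\sum_{i\in\bbn}\sum_{j\in\bbn}$ with the corresponding sum $\sum_{(i,j)\in\bbn^2}$ over the lexicographically ordered index set, up to image-relative homotopy, which gives exactly the claimed expression. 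I expect the only genuine obstacle to be the convergence bookkeeping in the second paragraph: one must exploit that clustering over $\bbn^2$ forces entire rows $\{f_{i,j}\}_{j}$ and $\{g_{i,j}\}_{j}$ into any prescribed neighborhood once the row index $i$ is large enough, which is precisely what legitimizes forming the horizontal concatenation of the $L_i$; the remaining manipulations are routine applications of the earlier image-relative identities.
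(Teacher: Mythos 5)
Your proposal is correct and follows exactly the paper's route: the paper obtains the corollary by forming the infinite (horizontal) sum over $i$ of the image-relative homotopies supplied by Lemma \ref{bilinearitylemma2} and then rearranging via the linearity of infinite sums (Corollary \ref{linearityofsumscor}). Your second paragraph merely makes explicit the convergence bookkeeping that the paper leaves implicit --- namely that image-relativity of the row homotopies $L_i$ plus clustering of the data over $\bbn^2$ forces $\{L_i\}_{i\in\bbn}$ to converge to $x_0$ --- which is a worthwhile detail but not a departure from the paper's argument.
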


\begin{proposition}[Graded symmetry]\label{gradedsymprop}
For all $f\in \Omega^p(X)$ and $g\in \Omega^q(X)$, we have $[ f,g]\simeqir (-1)^{pq}[ g,f]$.
\end{proposition}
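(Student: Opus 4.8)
The plan is to adapt the universal-model strategy already used for Proposition \ref{bilinearlityprop} and Lemma \ref{whnegationlemma}: realize the classical relation by a single based homotopy inside a wedge of spheres, and then push it forward along a surjection into $X$ so that image-relativity comes for free. Concretely, I would set $Y=S^p\vee S^q$ and let $\gamma\colon S^p\to Y$ and $\delta\colon S^q\to Y$ be the two wedge inclusions. The key preliminary observation is that $\gamma\vee\delta=\mathrm{id}_Y$, so that $[\gamma,\delta]=(\gamma\vee\delta)\circ a_{p,q}=a_{p,q}$, which is surjective (as already noted in the proof of Proposition \ref{welldefinedprop}). Thus the universal Whitehead product map $[\gamma,\delta]$ is onto $Y$.

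Next I would invoke the classical graded symmetry relation (3) of Remark \ref{relations} inside the CW-complex $Y$, which gives the equality $[\gamma,\delta]=(-1)^{pq}[\delta,\gamma]$ in $\pi_{p+q-1}(Y)$. Since these are homotopic based maps, there is a based homotopy $H\colon S^{p+q-1}\times\ui\to Y$ from the representative $[\gamma,\delta]=a_{p,q}$ to the representative $(-1)^{pq}[\delta,\gamma]$, where the sign is interpreted at the level of maps as reversal of the first coordinate. Because $H(\,\cdot\,,0)=a_{p,q}$ is already surjective onto $Y$, we automatically get $\im(H)=Y$; this is the point on which the whole argument turns.

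Finally I would push $H$ forward along $F=f\vee g\colon Y\to X$. Using $F\circ\gamma=f$ and $F\circ\delta=g$, one checks that $F\circ[\gamma,\delta]=(f\vee g)\circ a_{p,q}=[f,g]$, while $F\circ[\delta,\gamma]=(g\vee f)\circ a_{q,p}=[g,f]$. Since precomposition by $F$ commutes with first-coordinate reversal, the sign passes through, giving $F\circ\bigl((-1)^{pq}[\delta,\gamma]\bigr)=(-1)^{pq}[g,f]$. Hence $F\circ H$ is a based homotopy from $[f,g]$ to $(-1)^{pq}[g,f]$. For image-relativity, I would observe that $\im(F\circ H)=F(\im(H))=F(Y)=\im(f)\cup\im(g)$, and that this equals $\im([f,g])\cup\im((-1)^{pq}[g,f])$ since both brackets have image $\im(f)\cup\im(g)$ (again using surjectivity of the attaching maps and invariance of image under reversal). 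Therefore $[f,g]\simeqir(-1)^{pq}[g,f]$.

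I expect the main obstacle to be bookkeeping around the sign rather than any genuine topological difficulty: one must verify that the map-level identity $F\circ\bigl((-1)^{pq}[\delta,\gamma]\bigr)=(-1)^{pq}[g,f]$ holds on the nose, which requires the (routine) fact that reversal of the first coordinate is a domain reparameterization unaffected by postcomposition with $F$. The only other point needing care is the justification that the classical class-level relation yields an honest based homotopy between the specific chosen representatives; this is immediate in the CW-complex $Y$, but it is what lets the surjectivity of $a_{p,q}$ force $\im(H)=Y$ and hence deliver an image-relative, not merely a based, homotopy.
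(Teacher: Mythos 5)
Your proposal is correct and takes essentially the same route as the paper: the paper likewise realizes the classical relation by a based homotopy $K$ inside the universal model $S^p\vee S^q$ (phrasing the second representative as $(-1)^{pq}(\tau\circ[\delta,\gamma])$ via the twist homeomorphism $\tau$, a cosmetic difference from your direct use of $[\delta,\gamma]$) and then pushes it into $X$ by composing with $f\vee g$. Your explicit observation that surjectivity of $a_{p,q}$ forces the homotopy in the wedge to have full image, so that the pushed-forward homotopy has image exactly $\im(f)\cup\im(g)=\im([f,g])\cup\im((-1)^{pq}[g,f])$, is precisely the mechanism the paper uses to upgrade the based homotopy to an image-relative one.
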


\begin{proof}
Let $\tau:S^p\times S^q\to S^q\times S^p$ be the twist homeomorphism, which restricts to a homeomorphism $S^p\vee S^q\to S^q\vee S^p$. Let $\gamma:S^p\to S^p\vee S^q$ and $\delta:S^q\to S^p\vee S^q$ be the inclusion maps. Then $\tau\circ[ \delta,\gamma]:S^{p+q-1}\to S^p\vee S^q$ is the map satisfying $[ g,f]=(f\circ g) \circ \tau\circ[ \delta,\gamma]$. By the classical theory, we have $[\gamma,\delta]\simeq (-1)^{pq}(\tau\circ[ \delta,\gamma])$ by a homotopy $K:S^{p+q-1}\times \ui\to S^p\vee S^q$. Now $(f\vee g)\circ K$ is a homotopy from $[ f,g]$ to $(-1)^{pq}[ g,f]$ with image in $\im(f)\cup \im(g)$, proving the proposition.
\end{proof}

Applying a horizontal concatenation of graded symmetry homotopies from Proposition \ref{gradedsymprop}, we obtain the following.

\begin{corollary}\label{gradedsymcor}
Let $J$ be a countable linearly ordered set, $\{f_{j}\}_{j\in J}$ be an element of $\Omega_{J}^{p}(X,x_0)$, and $\{g_{j}\}_{j\in J}$ be an element of $\Omega_{J}^{q}(X,x_0)$. Then any two of the following three maps are image-relative homotopic to each other: $\sum_{j\in J}[f_j,g_j]$, $\sum_{j \in J}(-1)^{pq}[g_j,f_j]$, and $(-1)^{pq}\sum_{j \in J}[g_j,f_j]$.
\end{corollary}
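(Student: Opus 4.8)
The plan is to apply the single-bracket graded symmetry of Proposition \ref{gradedsymprop} term-by-term and glue the resulting homotopies together, after first reducing the general linearly ordered index set $J$ to $\bbn$. The one preliminary fact I would record is that $\im([f,g])=\im(f)\cup\im(g)$ for any $f\in\Omega^p(X)$ and $g\in\Omega^q(X)$: since $[f,g]=(f\vee g)\circ a_{p,q}$ and the attaching map $a_{p,q}$ is onto $S^p\vee S^q$, the image of a bracket is controlled by the images of its two inputs. Consequently, since $\{f_j\}_{j\in J}$ and $\{g_j\}_{j\in J}$ cluster at $x_0$, so do the bracket sequences $\{[f_j,g_j]\}_{j\in J}$ and $\{[g_j,f_j]\}_{j\in J}$; hence these are genuine elements of $\Omega^{p+q-1}_{J}(X,x_0)$ and the infinite-sum machinery applies.

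With this in hand, I would first invoke Corollary \ref{infcomm2}: fixing a bijection $\phi:\bbn\to J$, both $\sum_{j\in J}[f_j,g_j]$ and $\sum_{j\in J}(-1)^{pq}[g_j,f_j]$ are image-relative homotopic to their $\phi$-reindexed versions over $\bbn$, so it suffices to treat $J=\bbn$. For each $j\in\bbn$, Proposition \ref{gradedsymprop} gives an image-relative homotopy $[f_j,g_j]\simeqir(-1)^{pq}[g_j,f_j]$, and Proposition \ref{irsumsprop}(3) then permits the infinite horizontal concatenation of these term-wise homotopies, producing the first relation $\sum_{j\in\bbn}[f_j,g_j]\simeqir\sum_{j\in\bbn}(-1)^{pq}[g_j,f_j]$.

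For the second relation I would split on the parity of $pq$. If $pq$ is even then $(-1)^{pq}=1$ and the two sides coincide, so nothing is needed. If $pq$ is odd then $(-1)^{pq}[g_j,f_j]=-[g_j,f_j]$, and the claim $\sum_{j\in\bbn}-[g_j,f_j]\simeqir-\sum_{j\in\bbn}[g_j,f_j]$ is precisely Lemma \ref{negationlemma} applied to $\{[g_j,f_j]\}_{j\in\bbn}$. Composing the homotopies from the two relations finishes the chain.

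Since all of the heavy lifting is already packaged in Propositions \ref{gradedsymprop} and \ref{irsumsprop}, Corollary \ref{infcomm2}, and Lemma \ref{negationlemma}, the only point requiring genuine care is the image bookkeeping — verifying that the bracket sequences cluster at $x_0$ so that the infinite-concatenation results are applicable — together with correctly routing the sign through Lemma \ref{negationlemma} in the odd-$pq$ case. I do not anticipate any new ideas beyond term-wise graded symmetry combined with the earlier infinite-sum identities.
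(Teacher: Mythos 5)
Your proposal is correct and takes essentially the same route as the paper, which proves this corollary precisely by applying the graded symmetry homotopies of Proposition \ref{gradedsymprop} termwise and gluing them by infinite horizontal concatenation (Proposition \ref{irsumsprop}), with Lemma \ref{negationlemma} handling the sign in the second relation. Your extra bookkeeping---surjectivity of $a_{p,q}$ giving $\im([f,g])=\im(f)\cup\im(g)$ and hence clustering of the bracket sequences, and the reduction from $J$ to $\bbn$ via Corollary \ref{infcomm2}---simply makes explicit steps the paper leaves implicit.
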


The identity in the next theorem is a culmination of the previous identities in the case $p=q=n$ and will be used in the next section. Our use of image-relative homotopies makes the proof a straightforward combination of Lemma \ref{negationlemma}, Corollary \ref{bilinearitycor1}, and Corollary \ref{gradedsymcor}.

\begin{theorem}\label{mainidentitylemma}
If $\{f_{i,j}\}_{(i,j)\in\bbn^2}$ and $\{g_{i,j}\}_{(i,j)\in\bbn^2}$ are elements of $\Omega^{n}_{\bbn^2}(X)$, then $\ds\sum_{i=1}^{\infty}\left[\sum_{j=1}^{\infty}f_{i,j},\sum_{j=1}^{\infty}g_{i,j}\right]$ is an element of $\Omega^{2n-1}(X)$ that is image-relative homotopic to \[ \sum_{(i,j)\in\bbn^2}[f_{i,j},g_{i,j}]+
\sum_{(i,j)\in\bbn^2}\left[f_{i,j},\sum_{k>j}g_{i,k}\right]+(-1)^{n^2}\sum_{(i,j)\in\bbn^2}\left[g_{i,j},\sum_{k>j}f_{i,k}\right].\]
\end{theorem}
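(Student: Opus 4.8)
The plan is to recognize the asserted identity as an assembly of three results already established in this section, so that every convergence and concatenation argument has been pushed into the cited lemmas and into the image-relative bookkeeping. Since $p=q=n$ here, the starting point is Corollary \ref{bilinearitycor1} applied verbatim with $p=q=n$; this immediately yields an image-relative homotopy from $\sum_{i\in\bbn}\left[\sum_{j=1}^{\infty}f_{i,j},\sum_{j=1}^{\infty}g_{i,j}\right]$ to
\[\sum_{(i,j)\in\bbn^2}[f_{i,j},g_{i,j}]+\sum_{(i,j)\in\bbn^2}\left[f_{i,j},\sum_{k>j}g_{i,k}\right]+\sum_{(i,j)\in\bbn^2}\left[\sum_{k>j}f_{i,k},g_{i,j}\right].\]
Comparing this with the target, the first two summands already coincide, so the entire task reduces to transforming the third summand $\sum_{(i,j)}\left[\sum_{k>j}f_{i,k},g_{i,j}\right]$ into $(-1)^{n^2}\sum_{(i,j)}\left[g_{i,j},\sum_{k>j}f_{i,k}\right]$.

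First I would dispatch the well-definedness claim, namely that $\sum_{i\in\bbn}\left[\sum_{j}f_{i,j},\sum_{j}g_{i,j}\right]$ genuinely lies in $\Omega^{2n-1}(X,x_0)$. Using $\im[\phi,\psi]=\im(\phi)\cup\im(\psi)$ (the attaching map $a_{n,n}$ is onto, so $\phi\vee\psi$ covers $\im\phi\cup\im\psi$), the image of the $i$-th bracket is $\bigcup_j\im(f_{i,j})\cup\bigcup_j\im(g_{i,j})$. Since $\{f_{i,j}\}$ and $\{g_{i,j}\}$ cluster at $x_0$ over $\bbn^2$, only finitely many indices $i$ contribute a bracket whose image escapes a given neighborhood of $x_0$; hence the sequence of brackets converges to $x_0$ and the outer infinite sum is defined.

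The key step is the graded-symmetry transformation of the third summand. I would set $F_{i,j}:=\sum_{k>j}f_{i,k}$, observe that $\{F_{i,j}\}_{(i,j)\in\bbn^2}$ is an element of $\Omega^{n}_{\bbn^2}(X,x_0)$ (clustering again following from that of $\{f_{i,j}\}$), and apply Corollary \ref{gradedsymcor} to the doubly-indexed pair $\bigl(\{F_{i,j}\},\{g_{i,j}\}\bigr)$ over $J=\bbn^2$ with its lexicographical order and with $p=q=n$. This gives the image-relative homotopy
\[\sum_{(i,j)\in\bbn^2}\left[\sum_{k>j}f_{i,k},g_{i,j}\right]\simeq_{ir}(-1)^{n^2}\sum_{(i,j)\in\bbn^2}\left[g_{i,j},\sum_{k>j}f_{i,k}\right],\]
which is exactly the replacement needed. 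Corollary \ref{gradedsymcor} supplies both the termwise sign change and, through Lemma \ref{negationlemma}, the factoring of the global sign $(-1)^{n^2}$ out of the infinite sum; this factoring is the only substantive point and is nontrivial precisely when $n$ is odd.

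Finally I would compose. Substituting the homotopy above into the third summand while leaving the first two summands fixed produces an image-relative homotopy between the two full three-term expressions, using that image-relative homotopy is preserved under concatenation of maps (the third listed property of $\simeq_{ir}$): the first two summands carry constant homotopies and the third carries the graded-symmetry homotopy. Chaining this with the homotopy from Corollary \ref{bilinearitycor1} yields the asserted identity. I do not expect a genuine obstacle, since every convergence issue is discharged by the cited corollaries; the one point demanding care is the correct identification of which doubly-indexed sequences occupy the $f$- and $g$-slots of Corollary \ref{gradedsymcor}, namely $\{\sum_{k>j}f_{i,k}\}$ in the first slot and $\{g_{i,j}\}$ in the second.
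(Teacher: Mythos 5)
Your proposal is correct and follows essentially the same route as the paper, which offers no separate proof but states that the theorem is ``a straightforward combination of Lemma \ref{negationlemma}, Corollary \ref{bilinearitycor1}, and Corollary \ref{gradedsymcor}'' --- precisely your three ingredients: Corollary \ref{bilinearitycor1} with $p=q=n$, then Corollary \ref{gradedsymcor} applied to the pair $\bigl(\{\sum_{k>j}f_{i,k}\},\{g_{i,j}\}\bigr)$ over $J=\bbn^2$ with the lexicographic order, with Lemma \ref{negationlemma} factoring out the global sign $(-1)^{n^2}$. Your explicit verifications of the clustering of $\{\sum_{k>j}f_{i,k}\}_{(i,j)\in\bbn^2}$ and of the well-definedness of the outer sum are sound and simply make precise what the paper leaves implicit.
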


Although our application in the next section does not require the graded Jacobi identity, it is certainly important for future work in this area. Therefore, we complete this section by noting how it relates to infinite sums.
\begin{proposition}[Jacobi Identity]
For all $f\in\Omega^p(X)$, $g\in \Omega^q(X)$, $h\in \Omega^r(X)$, the map 
\[(-1)^{pr}[[ f,g],h]+ (-1)^{pq}[[ g,h] ,f]+(-1)^{rq}[[ h,f],g]\in \Omega^{p+q+r-2}(X)\] is null-homotopic in by an image-relative homotopy.
\end{proposition}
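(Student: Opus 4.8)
The plan is to imitate the proofs of graded symmetry (Proposition~\ref{gradedsymprop}) and bilinearity (Proposition~\ref{bilinearlityprop}): realize the classical relation by a homotopy that lives entirely inside a universal wedge of spheres, and then push it forward by a single map into $X$. Concretely, I would set $Y = S^p \vee S^q \vee S^r$ and let $\gamma:S^p\to Y$, $\delta:S^q\to Y$, $\epsilon:S^r\to Y$ be the inclusions of the three wedge-summands. The graded Jacobi identity from Remark~\ref{relations}(4), applied to the classes $[\gamma],[\delta],[\epsilon]$, states that the homotopy class
\[(-1)^{pr}[[\gamma,\delta],\epsilon] + (-1)^{pq}[[\delta,\epsilon],\gamma] + (-1)^{rq}[[\epsilon,\gamma],\delta]\]
is trivial in $\pi_{p+q+r-2}(Y)$. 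Hence the corresponding concatenation of representing maps, call it $A_Y\in\Omega^{p+q+r-2}(Y)$, is null-homotopic; I fix a based null-homotopy $K:S^{p+q+r-2}\times\ui \to Y$ from $A_Y$ to the constant map.

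Next I would push forward by $F = f\vee g\vee h:Y\to X$. Using Remark~\ref{inducedmapremark} repeatedly, $F\circ[[\gamma,\delta],\epsilon] = [[F\circ\gamma, F\circ\delta],F\circ\epsilon] = [[f,g],h]$, and similarly for the other two iterated brackets. Since precomposition with the attaching maps takes place in the domain, $F$ commutes with first-coordinate concatenation and with reversal, so $F\circ A_Y$ is exactly the map $A = (-1)^{pr}[[f,g],h] + (-1)^{pq}[[g,h],f] + (-1)^{rq}[[h,f],g]$ whose null-homotopy we seek. Because $F$ is basepoint-preserving, $F$ carries the constant map to the constant map at $x_0$. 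Therefore $F\circ K$ is a based null-homotopy of $A$.

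Finally I would verify the image-relative condition, which---exactly as in the two earlier propositions---comes for free. Since each attaching map $a_{-,-}$ is surjective, $\im([[f,g],h]) = \im(f)\cup\im(g)\cup\im(h)$, and likewise for the other two triple brackets; hence $\im(A) = \im(f)\cup\im(g)\cup\im(h)$. The homotopy $F\circ K$ satisfies $\im(F\circ K) = F(\im K) \subseteq F(Y) = \im(f)\cup\im(g)\cup\im(h) = \im(A)$, while $\im(F\circ K)\supseteq \im(A)$ because $F\circ K$ restricts to $A$ at time $0$. Thus $\im(F\circ K)=\im(A)$, which (as $x_0\in\im(A)$) is precisely the image-relative condition for a null-homotopy.

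I do not anticipate a genuine obstacle. The only points requiring care are (i) extracting an actual null-homotopy $K$, rather than merely the vanishing of a homotopy class, from Remark~\ref{relations}(4), and (ii) confirming that $F$ commutes with the sign and concatenation bookkeeping so that $F\circ A_Y = A$ on the nose; both are routine given the map-level formula for the Whitehead bracket together with Remark~\ref{inducedmapremark}. The image-relative refinement, which is the paper's central technical concern elsewhere, costs nothing here because the map $A$ being null-homotoped already attains the full image $\im(f)\cup\im(g)\cup\im(h)$.
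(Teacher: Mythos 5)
Your proposal is correct and follows essentially the same route as the paper's proof: realize the classical Jacobi relation by a null-homotopy $K$ inside the universal wedge $Y=S^p\vee S^q\vee S^r$ applied to the wedge-summand inclusions, then push forward by $f\vee g\vee h$. Your explicit verification that $\im\bigl((f\vee g\vee h)\circ K\bigr)=\im(f)\cup\im(g)\cup\im(h)$ (via surjectivity of the attaching maps) is a welcome detail that the paper leaves implicit, but it does not change the argument.
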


\begin{proof}
Let $Y=S^p\vee S^q\vee S^r$ and $\alpha:S^p\to Y$, $\beta:S^q\to Y$, and $\gamma:S^r\to Y$ be the inclusions of the wedge-summands. Then \[(-1)^{pr}[[ \alpha,\beta],\gamma]+ (-1)^{pq}[[ \beta,\gamma] ,\alpha]+ (-1)^{rq}[[ \gamma,\alpha],\beta]\]is null-homotopic in $Y$ by a hull-homotopy $K:S^{p+q+r-2}\times\ui\to Y$. Now $(f\vee g\vee h)\circ K$ is the desired null-homotopy.
\end{proof}

\begin{corollary}
Let $\{f_j\}_{j\in\bbn}$ be an element of $\Omega_{\bbn}^{p}(X,x_0)$, $\{g_j\}_{j\in\bbn}$ be an element of $\Omega_{\bbn}^{q}(X,x_0)$, $\{h_j\}_{j\in\bbn}$ be an element of $\Omega_{\bbn}^{r}(X,x_0)$. Then \[(-1)^{pr}\sum_{j\in\bbn}[[ f_j,g_j],h_j]+ (-1)^{pq}\sum_{j\in\bbn}[[ g_j,h_j] ,f_j]+(-1)^{rq}\sum_{j\in\bbn}[[ h_j,f_j],g_j]\] is null-homotopic by an image-relative homotopy.
\end{corollary}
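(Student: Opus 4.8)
The plan is to reduce the infinite statement to the single-triple Jacobi identity (the preceding proposition) applied termwise, and then to glue the resulting null-homotopies by an infinite horizontal concatenation. First I would fix $j\in\bbn$ and apply the preceding proposition to the triple $(f_j,g_j,h_j)$ to obtain an image-relative null-homotopy $K_j$ of the single map
\[(-1)^{pr}[[f_j,g_j],h_j]+(-1)^{pq}[[g_j,h_j],f_j]+(-1)^{rq}[[h_j,f_j],g_j]\in\Omega^{p+q+r-2}(X,x_0).\]
By the construction in that proposition the homotopy factors through $f_j\vee g_j\vee h_j$, so $\im(K_j)\subseteq \im(f_j)\cup\im(g_j)\cup\im(h_j)$.

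Next I would verify the clustering that makes every infinite sum in sight well-defined. Since $\im([f,g])\subseteq\im(f)\cup\im(g)$ for any Whitehead product of maps, iterating gives $\im([[f_j,g_j],h_j])\subseteq\im(f_j)\cup\im(g_j)\cup\im(h_j)$, and similarly for the other two bracketed terms. Because the sequences $\{f_j\}_{j\in\bbn}$, $\{g_j\}_{j\in\bbn}$, $\{h_j\}_{j\in\bbn}$ each converge to $x_0$, every neighborhood $U$ of $x_0$ contains $\im(f_j)\cup\im(g_j)\cup\im(h_j)$ for all but finitely many $j$. Hence each of the three bracket sequences lies in $\Omega^{p+q+r-2}_{\bbn}(X,x_0)$, and the homotopies $\{K_j\}_{j\in\bbn}$ converge to $x_0$ as well, so they form a sequential null-homotopy. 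Proposition \ref{irsumsprop}(2) then yields that the single grouped sum $\sum_{j\in\bbn}\bigl((-1)^{pr}[[f_j,g_j],h_j]+(-1)^{pq}[[g_j,h_j],f_j]+(-1)^{rq}[[h_j,f_j],g_j]\bigr)$ is null-homotopic, indeed by an image-relative homotopy.

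Finally I would reconcile this grouped-by-$j$ sum with the statement, in which the three sums appear separately with the signs pulled to the front. For this I would invoke linearity of infinite sums (Corollary \ref{linearityofsumscor}, applied to split the three-term summand) together with the negation lemma (Lemma \ref{negationlemma}) to move the constant signs $(-1)^{pr},(-1)^{pq},(-1)^{rq}\in\{\pm1\}$, producing an image-relative homotopy from the grouped sum to $(-1)^{pr}\sum_{j}[[f_j,g_j],h_j]+(-1)^{pq}\sum_{j}[[g_j,h_j],f_j]+(-1)^{rq}\sum_{j}[[h_j,f_j],g_j]$. Composing this with the null-homotopy from the previous step completes the argument.

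The hard part is not any single deep step but the bookkeeping around clustering: one must ensure that passing to iterated Whitehead products and then regrouping three interleaved infinite sums never destroys convergence to $x_0$. The image-containment estimate $\im([[f_j,g_j],h_j])\subseteq\im(f_j)\cup\im(g_j)\cup\im(h_j)$ is precisely what guarantees this, and once it is in place every remaining step is a direct citation of the image-relative homotopy machinery of Sections \ref{sectionprelim} and \ref{sectionidentity}.
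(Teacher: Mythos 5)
Your proposal is correct and matches the paper's (implicit) argument exactly: the corollary is stated as an immediate consequence of the Jacobi proposition, obtained by taking the termwise image-relative null-homotopies $K_j$ (whose images lie in $\im(f_j)\cup\im(g_j)\cup\im(h_j)$, guaranteeing convergence to $x_0$), forming their infinite horizontal concatenation, and regrouping via the infinite-sum linearity machinery. Your extra care with the clustering estimate and the regrouping step via Corollary \ref{linearityofsumscor} is precisely the bookkeeping the paper leaves to the reader.
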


\section{Infinite sum closures and Whitehead products in Shrinking Wedges}

\subsection{Shrinking wedges of spaces}

\begin{definition}\label{swdef}
For a countably infinite collection $\{(X_j,x_j)\}_{j\in J}$ of based spaces, let $\sw_{j\in J}X_j$ denote the wedge sum $\bigvee_{j\in J}X_j$, i.e. one-point union, where the points $x_j$ are identified to a canonical basepoint $x_0$. We give $\sw_{j\in J}X_j$ the topology consisting of sets $U$ such that $U\cap X_j$ is open in $X_j$ for all $j\in J$ and if $x_0\in U$, then $X_j\subseteq U$ for all but finitely many $j\in J$. We call $\sw_{j\in J}X_j$ the \textit{shrinking wedge} of the collection $\{(X_j,x_j)\}_{j\in J}$.
\end{definition}

\begin{definition}
The \textit{$n$-dimensional earring space} $\bbe_n$ is the shrinking wedge of $n$-spheres, that is $\bbe_n=\sw_{j\in\bbn}X_j$ where $X_j=S^n$ for all $j\in\bbn$.
\end{definition}

When a shrinking wedge $\sw_{j\in \bbn}X_j$ is indexed by the natural numbers, let $X_{\leq k}=\bigvee_{j=1}^{k}X_j$ be the finite wedge and let $X_{\geq k}=\sw_{j=k}^{\infty}X_j$ and $X_{>k}=\sw_{j=k+1}^{\infty}X_j$ denote cofinal shrinking-wedge subspaces. For any non-empty subset $F\subseteq \bbn$, the set $\bigcup_{j\in F}X_j$ is canonically a retract of $\sw_{j\in\bbn}X_j$ and we identify $\pi_n(\bigcup_{k\in F}X_k)$ with the corresponding subgroup of $\pi_n(\sw_{j\in\bbn}X_j)$.

If $p_j:\sw_{j\in\bbn}X_j\to X_j$ denotes the canonical retraction for each $j\in\bbn$, then there is an embedding $\sigma:\sw_{j\in\bbn}X_j\to \prod_{j\in\bbn}X_j$ into the infinite direct product given by $\sigma(x)=(p_j(x))_{j\in\bbn}$. We sometimes identify $\sw_{j\in\bbn}X_j$ as a subspace of $\prod_{j\in\bbn}X_j$ so that $\sigma$ is an inclusion map. Under this identification, the long exact sequence of the pair $\left(\prod_{j\in\bbn}X_j,\sw_{j\in\bbn}X_j\right)$ breaks into short exact sequences
\[\xymatrix{0 \ar[r] & \pi_{n+1}\left(\prod_{j\in\bbn}X_j,\sw_{j\in\bbn} X_j\right) \ar[r]^-{\partial} & \pi_n(\sw_{j\in\bbn}X_j) \ar[r]^-{\sigma_{\#}} & \pi_n(\prod_{j\in\bbn}X_j) \ar[r] & 0
}\]
when $n\geq 2$. Moreover, there is a canonical splitting homomorphism $\zeta:\pi_n(\prod_{j\in\bbn}X_j)\to \pi_n(\sw_{j\in\bbn}X_j)$ given by $\zeta([(f_j)_{j\in\bbn}])=\left[\sum_{j\in\bbn}f_j\right]$ where $(f_j)_{j\in \bbn}:S^n\to \prod_{j\in\bbn}f_j$ represents the map satisfying $p_k\circ(f_j)_{j\in\bbn}=f_k$ for all $k\in\bbn$ (see \cite{Kawamurasuspensions}). Hence, \[\ker(\sigma_{\#})\cong\pi_{n+1}\left(\prod_{j\in\bbn}X_j,\sw_{j\in\bbn} X_j\right)\] and we have a natural splitting of $\pi_n(\sw_{j\in\bbn}X_j)$:
\begin{eqnarray*}
\pi_n\left(\sw_{j\in\bbn}X_j\right) &\cong& \pi_{n+1}\left(\prod_{j\in\bbn}X_j,\sw_{j\in\bbn}X_j\right)\oplus \pi_n\left(\prod_{j\in\bbn}X_j\right)\\
&\cong& \pi_{n+1}\left(\prod_{j\in\bbn}X_j,\sw_{j\in\bbn}X_j\right)\oplus \prod_{j\in\bbn}\pi_n(X_j)
\end{eqnarray*}

The following theorem is a special case of the main result of \cite{EK00higher}. An alternative proof is given in \cite{BrazSequentialnconn}.

\begin{theorem}[Eda-Kawamura]\label{connectedtheorem}
If $n\geq 2$ and $X=\sw_{j\in\bbn}X_j$ is a shrinking wedge of $(n-1)$-connected CW-complexes, then $X$ is $(n-1)$-connected and $\sigma_{\#}:\pi_n(X)\to \pi_n(\prod_{j\in\bbn}X_j)$ is an isomorphism. Thus $\pi_n(X)\cong\prod_{j\in\bbn}\pi_n(X_j)$.
\end{theorem}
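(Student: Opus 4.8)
The plan is to prove the single unified statement that $\sigma_{\#}\colon\pi_k(X)\to\pi_k\left(\prod_{j\in\bbn}X_j\right)$ is an isomorphism for every $k$ with $1\le k\le n$. Write $P=\prod_{j\in\bbn}X_j$. Since homotopy groups commute with products, $\pi_k(P)\cong\prod_{j\in\bbn}\pi_k(X_j)$, so $P$ is $(n-1)$-connected and $\pi_n(P)\cong\prod_{j\in\bbn}\pi_n(X_j)$. Hence this one statement delivers everything at once: for $k\le n-1$ it forces $\pi_k(X)\cong\pi_k(P)=0$, giving $(n-1)$-connectedness, while the case $k=n$ is exactly the asserted isomorphism $\pi_n(X)\cong\prod_{j\in\bbn}\pi_n(X_j)$. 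First I would record that it suffices to treat injectivity and surjectivity of $\sigma_{\#}$ separately, and that $\sigma_{\#}[f]=0$ is equivalent to each projected map $p_j\circ f\colon S^k\to X_j$ being null-homotopic.

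Surjectivity is the easy half and is handled by the splitting $\zeta$. Given $\alpha\in\pi_k(P)$, represent it by a family $(g_j)_{j\in\bbn}$ with $g_j\in\Omega^k(X_j)$. Viewing each $g_j$ as a map into $X_j\subseteq X$, the shrinking-wedge topology forces the family $\{g_j\}_{j\in\bbn}$ to converge to $x_0$: any neighborhood of $x_0$ contains all but finitely many $X_j$, hence contains all but finitely many images $\im(g_j)$. Thus the infinite concatenation $\sum_{j\in\bbn}g_j$ is a well-defined element of $\Omega^k(X)$, and projecting by $p_i$ recovers $g_i$ up to reparameterization (all other summands land in $X_j$, $j\ne i$, which $p_i$ collapses). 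Therefore $\sigma_{\#}\left[\sum_{j\in\bbn}g_j\right]=\alpha$, so $\sigma_{\#}$ is split surjective.

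Injectivity is the crux, and I would reduce it to the null-homotopy claim: if $f\in\Omega^k(X)$ has $p_j\circ f\simeq\ast$ for every $j$, then $f\simeq\ast$. The idea is to kill $f$ one truncation at a time and then pass to the limit. For each $m$ let $r_m\colon X\to X_{\leq m}$ be the retraction collapsing the tail $X_{>m}$. The finite wedge $X_{\leq m}=\bigvee_{j=1}^{m}X_j$ is a wedge of $(n-1)$-connected complexes, so the classical Blakers--Massey / Hilton analysis of the pair $\left(\prod_{j\le m}X_j,X_{\leq m}\right)$ shows that $\pi_k(X_{\leq m})\to\prod_{j\le m}\pi_k(X_j)$ is injective in the range $k\le 2n-2$, which includes $k\le n$ since $n\ge 2$ \cite{WhiteheadEOH}. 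As the projections of $r_m\circ f$ are exactly the null classes $p_j\circ f$ for $j\le m$, each truncation $r_m\circ f$ is null-homotopic in $X_{\leq m}$. The remaining task is to assemble these finite-stage null-homotopies into a single null-homotopy of $f$ itself. Here the shrinking-wedge topology is decisive: $f$ and $r_m\circ f$ differ only on $f^{-1}(X_{>m})$, where $f$ has image inside the cofinal subwedge $X_{>m}$, and these tail images shrink to $x_0$ as $m\to\infty$. Choosing the null-homotopies compatibly across successive stages and interpolating across the tails, one obtains a convergent sequence of homotopies whose infinite vertical concatenation (Remark \ref{infverticalremark} and Proposition \ref{irsumsprop}) is a continuous null-homotopy of $f$.

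I expect the main obstacle to be precisely this last assembly step: the finite-stage null-homotopies must be chosen compatibly, and the interpolation across the tails must be controlled so that the infinitely many homotopies fit together into a single continuous map that is well-behaved on the preimage of $x_0$. This is the genuinely non-formal content of the theorem, morally the vanishing of the relevant derived-limit obstruction for the tower $\{X_{\leq m}\}$, and it is where the shape-theoretic viewpoint $X\cong\varprojlim_m X_{\leq m}$ \cite{MS82} together with the careful convergence estimates of Eda--Kawamura \cite{EK00higher} (or the sequential deformation techniques of \cite{BrazSequentialnconn}) are required. Once continuity of the assembled homotopy is secured, injectivity follows and the proof is complete.
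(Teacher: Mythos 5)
The first thing to say is that the paper does not prove Theorem \ref{connectedtheorem} at all: it is quoted as ``a special case of the main result of \cite{EK00higher},'' with an alternative proof in \cite{BrazSequentialnconn}, so there is no internal argument to compare yours against. Judged on its own merits, your proposal is correct and complete on the easy half: the surjectivity argument via the infinite concatenation $\sum_{j}g_j$ is exactly the splitting $\zeta$ of \cite{Kawamurasuspensions} that the paper records, and your finite-stage input is also right --- for a finite wedge of $(n-1)$-connected complexes the pair $(\prod_{j\le m}X_j, X_{\leq m})$ is $(2n-1)$-connected, so $\pi_k(X_{\leq m})\to\prod_{j\le m}\pi_k(X_j)$ is injective for $k\le 2n-2\supseteq\{1,\dots,n\}$, which correctly isolates why $n\ge 2$ is needed (at $n=1$ this finite stage already fails).

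The gap is the assembly step, and you should not regard it as a controllable technicality: it is the entire theorem. Concretely, two things block the route as written. First, the infinite vertical concatenation of Remark \ref{infverticalremark} requires a recursive decomposition $f\simeq g_1+f_2$, $f_2\simeq g_2+f_3,\dots$ realized by homotopies $L_j$ whose images converge to $x_0$; an arbitrary $f\in\Omega^k(X)$ admits no such decomposition a priori, since the values of $f$ in the various $X_j$ interleave arbitrarily over the domain, and producing a deformation of $f$ into a standard form $\sum_j f_j$ with $f_j\in\Omega^k(X_j)$ is essentially equivalent to the statement being proved (this deformation is the actual content of \cite{BrazSequentialnconn}). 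Second, the null-homotopies of $r_m\circ f$ in $X_{\leq m}$ supplied by the finite-stage argument come with no control whatsoever on their images, and without images shrinking to $x_0$ the proposed limit of homotopies is not continuous; Example \ref{archipelagoexample} in the paper shows that ``each stage null-homotopic'' does not formally imply ``globally null-homotopic,'' so genuine geometric input about the shrinking wedge must enter, not just compatibility bookkeeping. Your $\lim^1$/Mittag-Leffler remark does not repair this: the tower argument computes homotopy groups of the \emph{homotopy} inverse limit of $\{X_{\leq m}\}$, and identifying those with the homotopy groups of the actual limit $X$ is again precisely what Eda--Kawamura prove. Since you ultimately defer exactly this step to \cite{EK00higher} and \cite{BrazSequentialnconn}, your proposal in effect reduces to citing the theorem --- which is what the paper itself does --- but as a standalone proof it is incomplete at its only hard point.
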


Theorem \ref{connectedtheorem} implies that if each $X_j$ is $(n-1)$-connected and $\alpha\in\pi_n\left(\sw_{j\in\bbn}X_j\right)$, then there are unique elements $\alpha_j\in \pi_n(X_j)$ such that $\alpha=\left[\sum_{j=1}^{\infty}f_j\right]$ for any choice of representing maps $f_j\in \alpha_j$. In the case of the $n$-dimensional earring, we have $\pi_{n}(\bbe_n)\cong \pi_n\left(\prod_{j\in\bbn}S^n\right)\cong \bbz^{\bbn}$. Moreover, if $\ell_j:S^n\to \bbe_n$ denotes the inclusion of the $j$-th sphere, then every element of $\pi_n(\bbe_n)$ may be represented uniquely as $\left[\sum_{j=1}^{\infty}m_j\ell_j\right]$ for a sequence of integers $\{m_{j}\}_{j\in\bbn}$.

\begin{example}\label{archipelagoexample}
Let $\bbe_n$ be the $n$-dimensional earring space for some $n\geq 2$ and let $\ell_j:S^n\to \bbe_n$ denote the inclusion of the $j$-th sphere. Take $\bba=\bbe_n\cup_{\ell_j} \bigcup_{j\in\bbn}e_{j}^{n+1}$ to be the space obtained by attaching, for each $j\in\bbn$, an $(n+1)$-cell $e_{j}^{n+1}$ to $\bbe_n$ with the attaching map $\ell_j$. Then $\ell_j$ is null-homotopic in $\bba$ for all $j\in\bbn$. However, $f=\sum_{j=1}^{\infty}\ell_j$ is not null-homotopic in $\bba$. Indeed, every map $H:S^n\times \ui\to \bba$ has compact image and therefore lies in one of the subspaces $\bba_k=\bbe_n\cup \bigcup_{j=1}^{k}e_{j}^{n+1}$ but $f$ is not null-homotopic in any $\bba_k$ since $(\bbe_n)_{>k}$ is a retract of $\bba_k$ and the image $\sum_{j=k+1}^{\infty}\ell_j$ of $f$ under this retraction is not null-homotopic in $(\bbe_n)_{>k}$. Thus, in $\pi_n(\bba)$, the infinite sum operation on homotopy classes is not well defined. While this possibility may appear startling at first glance it does not hinder our ability to describe the group since Theorem \ref{connectedtheorem} makes the computation $\pi_n(\bba)\cong \prod_{j\in\bbn}\bbz/\bigoplus_{j\in\bbn}\bbz$ straightforward.
\end{example}

\subsection{Infinite sum closures}

\begin{definition}\label{infiniteclosuredef1}
Suppose $(X,x_0)$ is a based space and $n\geq 2$.
\begin{enumerate}
\item We say that a subset $S\subseteq \pi_n(X,x_0)$ is \textit{closed under infinite-sums} if for all $\{f_j\}_{j\in\bbn}\in\loopn_{\bbn}(X,x_0)$ such that $[f_j]\in S$ for all $j\in\bbn$, we have $\left[\sum_{j=1}^{\infty}f_j\right]\in S$. 
\item The \textit{infinite-sum closure of a subset} $S\subseteq \pi_n(X,x_0)$ is the smallest subgroup $cl_{\Sigma}(S)$ of $\pi_n(X,x_0)$ that contains $S$ and which is closed under infinite sums, that is,
\[cl_{\Sigma}(S)=\bigcap\{H\leq \pi_n(X,x_0)\mid S\subseteq H\text{ and }H\text{ is closed under infinite sums}\}.\]
\item Given a subset $S\subseteq \pi_n(X,x_0)$ and the subgroup $\lb S\rb$ of $\pi_n(X,x_0)$ generated by $S$, we write $\llbracket S\rrbracket$ to denote the subset of $\pi_n(X,x_0)$ consisting of infinite sums $\left[\sum_{j=1}^{\infty}f_j\right]$ where $\{f_j\}_{j\in\bbn}\in\loopn_{\bbn}(X,x_0)$ and $[f_j]\in \lb S\rb $ for all $j\in\bbn$.
\end{enumerate}
\end{definition}

\begin{remark}
It is important to note that ``closure under infinite sums" refers to representing maps and not infinite sums of homotopy classes (which we have not defined). Recall that $\loopn_{\bbn}(X,x_0)$ is the set of sequences $\{f_j\}_{j\in\bbn}$ that converge to $x_0$. Hence, a set $S\subseteq \pi_n(X,x_0)$ is closed under infinite-sums if and only if it is closed under homotopy classes of well-defined infinite sum maps $\sum_{j=1}^{\infty}f_j$ where the summands $f_j$ represent elements of $S$. If a sequence $\{f_j\}_{j\in\bbn}$ of maps representing elements of $S$ does not converge to $x_0$, then it is not relevant to whether or not $S$ is closed under infinite sums. However, even if $\{f_j\}_{j\in\bbn}$ does not converge to $x_0$, it is possible that $\{g_j\}_{j\in\bbn}$ is another sequence which \textit{does} converge to $x_0$ and for which $[f_j]=[g_j]\in S$ for all $j\in\bbn$. In this case, we must have $\left[\sum_{j\in\bbn}g_j\right]\in S$ if $S$ is closed under infinite sums. Recall that is also possible that two well-defined infinite sum maps $\sum_{j=1}^{\infty}f_j$ and $\sum_{j=1}^{\infty}g_j$ have distinct homotopy classes even if $[f_j]=[g_j]\in S$ for all $j$. In this case, for $S$ to be closed under infinite-sums, it must be that $S$ contains both of the homotopy classes $\left[\sum_{j=1}^{\infty}f_j\right]$ and $\left[\sum_{j=1}^{\infty}g_j\right]$.
\end{remark}

Certainly, $\pi_n(X,x_0)$ is closed under infinite sums and the property of being ``closed under infinite sums" is closed under forming arbitrary intersections of subgroups of $\pi_n(X,x_0)$. Thus, for any given subset $S$ of $\pi_n(X,x_0)$, $cl_{\Sigma}(S)$ is a well-defined subgroup of $\pi_n(X,x_0)$.

\begin{proposition}\label{inclusionprop}
For any non-empty subset $S\subseteq \pi_n(X,x_0)$, $\llbracket S\rrbracket$ is a subgroup of $\pi_n(X,x_0)$ and we have subgroup inclusions $\langle S\rangle\leq \llbracket S\rrbracket\leq cl_{\Sigma}(S)\leq \pi_n(X,x_0)$.
\end{proposition}

\begin{proof}
Sequences $\{f_j\}_{j\in\bbn}$ for which $[f_j]\in \langle S\rangle$ and for which all but finitely many $f_j$ are constant maps show that $\langle S\rangle\subseteq \llbracket S\rrbracket$. Also, $cl_{\Sigma}(S)$ is a subgroup of $\pi_n(X,x_0)$ by definition and we have $\llbracket S\rrbracket\subseteq cl_{\Sigma}(S)$ by the definition of $\llbracket S\rrbracket$. Lastly, we check that $\llbracket S\rrbracket$ is a subgroup of $\pi_n(X,x_0)$. If $\{f_j\}_{j\in\bbn}$ and $\{g_j\}_{j\in\bbn}$ are elements of $\loopn_{\bbn}(X,x_0)$ and $[f_j],[g_j]\in \lb S\rb $ for all $j\in\bbn$, then the fact that $[f_j+(-1)g_j]\in \lb S\rb$ for all $j\in\bbn$ and that $\sum_{j\in\bbn}f_j+(-1)\sum_{j\in\bbn}g_j\simeq\sum_{j\in\bbn}(f_j+(-1)g_j)$ shows that the difference $\left[\sum_{j=1}^{\infty}f_j\right]-\left[\sum_{j=1}^{\infty}g_j\right]$ lies in $\llbracket S\rrbracket$. We conclude that $\llbracket S\rrbracket$ is a subgroup of $\pi_n(X,x_0)$.
\end{proof}

It is not obviously true that the subgroups $\llbracket S\rrbracket$ and $cl_{\Sigma}(S)$ are equal. In fact, this appears to be false in some scenarios. Rather, one should consider the operation $H\mapsto \llbracket H\rrbracket$ on subgroups $H\leq \pi_n(X,x_0)$ to be more akin to the sequential closure operator on sets in a topological space, which might need to be applied recursively until one arrives at the topological closure of the set. That being said, there are many natural situations in which $\llbracket S\rrbracket$ is closed under infinite sums and is therefore equal to $cl_{\Sigma}(S)$.

\begin{remark}
The constructions in Definition \ref{infiniteclosuredef1} are intended to be applied to spaces where infinite-sums can be formed only at a single ``wild" point. Indeed, the operation $S\mapsto cl_{\Sigma}(S)$ may not commute with path-conjugation when switching basepoints because a path-conjugated sequence $\{p\ast f_j\}_{j\in\bbn}$ of a sequence of maps $\{f_j\}_{j\in\bbn}\in \Omega^{n}_{\bbn}(X,x_0)$ does not converge to any point unless the path $p$ is constant. If one wishes to consider spaces in which non-trivial infinite sums can be formed at more than one point, then a modified definition of ``infinite-sum closure" might be needed.
\end{remark}

\begin{lemma}\label{sigmaclosedlemma}
Let $X=\sw_{j\in\bbn}X_j$ be a shrinking wedge of CW-complexes and $n\geq 2$. Then $\ker(\sigma_{\#}:\pi_n(X)\to \pi_n(\prod_{j\in\bbn}X_j))$ is closed under infinite sums.
\end{lemma}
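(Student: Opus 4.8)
The plan is to push the sum through $\sigma_\#$ and then work one coordinate of the product at a time. Suppose $\{f_j\}_{j\in\bbn}\in\loopn_\bbn(X,x_0)$ satisfies $[f_j]\in\ker(\sigma_\#)$ for every $j$. Since $\sigma$ is a based map, composition distributes over infinite concatenation, so $\sigma\circ\sum_{j=1}^\infty f_j=\sum_{j=1}^\infty(\sigma\circ f_j)$ and hence $\sigma_\#\left[\sum_{j=1}^\infty f_j\right]=\left[\sum_{j=1}^\infty\sigma\circ f_j\right]$. It therefore suffices to prove that $\sum_{j=1}^\infty\sigma\circ f_j$ is null-homotopic in $\prod_{m\in\bbn}X_m$.

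Because homotopy groups commute with products, a map into $\prod_{m\in\bbn}X_m$ is null-homotopic exactly when each of its coordinate maps is, and any choice of coordinate null-homotopies assembles into a single null-homotopy of the product map with no compatibility needed between coordinates. Writing $q_m:\prod_{k\in\bbn}X_k\to X_m$ for the projection and using $q_m\circ\sigma=p_m$, the $m$-th coordinate of $\sum_{j=1}^\infty\sigma\circ f_j$ is $\sum_{j=1}^\infty p_m\circ f_j$. The problem thus reduces to showing, for each fixed $m$, that $\sum_{j=1}^\infty p_m\circ f_j$ is null-homotopic in $X_m$. Note the hypothesis gives $\sigma_\#[f_j]=([p_m\circ f_j])_{m\in\bbn}=0$, so that $[p_m\circ f_j]=0$ in $\pi_n(X_m)$ for all $m$ and $j$.

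Fix $m$. Since $\{f_j\}_{j\in\bbn}$ converges to $x_0$ and $p_m(x_0)=x_m$, the images $\im(p_m\circ f_j)=p_m(\im f_j)$ shrink to $x_m$, so $\{p_m\circ f_j\}_{j\in\bbn}$ converges to $x_m$ in $X_m$. My plan is to choose, for each $j$, a based null-homotopy $H_j$ of $p_m\circ f_j$ so that the sequence $\{H_j\}_{j\in\bbn}$ also converges to $x_m$. With such a choice, $\{p_m\circ f_j\}_{j\in\bbn}$ is sequentially null-homotopic, and Proposition \ref{irsumsprop}(2) yields that $\sum_{j=1}^\infty p_m\circ f_j$ is null-homotopic in $X_m$. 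Assembling these coordinate null-homotopies produces the desired null-homotopy of $\sum_{j=1}^\infty\sigma\circ f_j$, giving $\left[\sum_{j=1}^\infty f_j\right]\in\ker(\sigma_\#)$.

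The main obstacle is precisely the selection of the null-homotopies $H_j$ with images converging to $x_m$: a null-homotopic map with small image need not admit a null-homotopy with small image. This is the exact phenomenon isolated in Example \ref{archipelagoexample}, where an infinite sum of individually null-homotopic maps fails to be null-homotopic because the required fillings cannot be made to cluster at the basepoint. I would resolve it by exploiting the regularity of the summand spaces---for the finite CW-complexes of the intended application, a map into $X_m$ whose image lies in a sufficiently small neighborhood of $x_m$ is null-homotopic within a controlled enlargement of that neighborhood---so that the cofinal tail of the chosen fillings automatically clusters at $x_m$ and the horizontal concatenation underlying Proposition \ref{irsumsprop}(2) is legitimate.
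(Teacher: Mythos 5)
Your reduction coincides with the paper's proof up to the crux: both distribute $\sigma$ (equivalently, each retraction $p_m$) across the infinite concatenation and reduce to showing that $\sum_{k=1}^{\infty}p_m\circ f_k$ is null-homotopic in $X_m$ for each fixed $m$. But at exactly that point the paper does not select null-homotopies at all. It asserts that, for fixed $m$, convergence of $\{f_k\}_{k\in\bbn}$ to $x_0$ forces $p_m\circ f_k$ to be \emph{constant} for all but finitely many $k$, so that the coordinate sum is a concatenation of finitely many null-homotopic maps with constant maps and is null-homotopic for elementary reasons --- no sequential null-homotopy, no appeal to Proposition \ref{irsumsprop}(2), and no hypothesis on the summands. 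Your proposal misses this finiteness mechanism, and the substitute offered in your final paragraph is a genuine gap as a proof of the lemma \emph{as stated}: the statement allows arbitrary based spaces $X_j$, so ``regularity of the summand spaces'' (small maps into finite CW-complexes bounding small null-homotopies) is not available to you. Even in the CW case you only sketch the key selection: to get null-homotopies $H_j$ whose images cluster at $x_m$ you need something like a nested basis of contractible neighborhoods of $x_m$ (available for finite CW-complexes via metrizability and local contractibility), and a uniform rule such as taking $H_j$ inside the smallest basis element of index $\leq j$ containing $\im(p_m\circ f_j)$; as written, the choice is not made.

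That said, your instinct about where the danger lies is sound, and it probes the paper's own step. A neighborhood $U$ of $x_0$ in $\sw_{j\in\bbn}X_j$ traces on a fixed summand to an \emph{arbitrary} open neighborhood $U\cap X_m$ of $x_0$ in $X_m$ (only the cofinitely many remaining summands must lie wholly inside $U$), so convergence of $\{f_k\}$ to $x_0$ yields only that $\{p_m\circ f_k\}_k$ converges to $x_0$ in $X_m$ --- not that it is eventually constant. Eventual constancy is automatic in the situations the paper actually sums over (sequences whose images lie in tail sub-wedges $X_{\geq m_k}$ with $m_k\to\infty$, as in Lemma \ref{closurelemma}), but for general summands the phenomenon you cite from Example \ref{archipelagoexample} genuinely intrudes: take $X_1=\bba$ and $X_j=S^n$ for $j\geq 2$, and let $f_k$ be the composite of $\ell_k$ with the inclusions $\bbe_n\subseteq \bba=X_1\subseteq X$. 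Every neighborhood of $x_0$ in $X$ traces to a neighborhood of $x_0$ in $\bba$, which contains all but finitely many of the spheres $\im(\ell_k)$, so $\{f_k\}_{k\in\bbn}$ converges to $x_0$; each $[f_k]=0\in\ker(\sigma_{\#})$ since $\ell_k$ bounds a cell in $\bba$; yet $p_1\circ\sum_{k=1}^{\infty}f_k=\sum_{k=1}^{\infty}\ell_k$ is not null-homotopic in $\bba$, so $\left[\sum_{k=1}^{\infty}f_k\right]\notin\ker(\sigma_{\#})$. So your proposed restriction to well-behaved summands is not a cosmetic retreat --- some such hypothesis, or the tail-support condition, is actually needed --- but it repairs a different argument than the paper's, and it does not deliver the lemma in the generality in which both you and the paper state it.
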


\begin{proof}
Let $p_j:X\to X_j$ be the canonical retraction for each $j\in\bbn$ so that $\sigma(x)=(p_j(x))_{j\in\bbn}$. Let $\{f_k\}_{k\in\bbn}$ be an element of $\Omega^{n}_{\bbn}(X)$ such that $[f_k]\in \ker(\sigma_{\#})$ for all $k\in\bbn$. Thus $p_j\circ f_k$ is null-homotopic in $X_j$ for all pairs $(j,k)\in\bbn^2$. Since $X_j$ is a CW-complex, the sequence $\{p_j\circ f_k\}_{k\in\bbn}$ is sequentially null-homotopic in $X_j$. It follows that $p_j\circ \sum_{k=1}^{\infty}f_k=\sum_{k=1}^{\infty}p_j\circ f_k$ is null-homotopic in $X_j$ for each $j\in\bbn$. Since $\sigma$ trivializes the homotopy class of $\sum_{k=1}^{\infty}f_k$, we have $\left[\sum_{k=1}^{\infty}f_k\right]\in\ker (\sigma_{\#})$.
\end{proof}

\begin{proposition}\label{sumhomomoprop}
Let $X=\sw_{j\in\bbn}X_j$ be a shrinking wedge of based spaces and $n\geq 2$. The summation operation $\mathbf{S}_n:\prod_{k\in\bbn}\pi_{n}(X_{\geq k})\to \pi_n(X)$, $\mathbf{S}_n(([f_k])_{k\in\bbn})= \left[\sum_{k=1}^{\infty}f_k\right]$ is a well-defined homomorphism.
\end{proposition}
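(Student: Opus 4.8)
The plan is to verify in turn that $\mathbf{S}_n$ lands in $\loopn(X,x_0)$, that its value is independent of the chosen representing maps, and that it respects the coordinatewise group operation; the only genuine content is a clustering argument forced by the use of the cofinal subwedges $X_{\geq k}$ rather than $X$ itself. First I would check that for any choice of representatives $f_k\in\alpha_k$ the sequence $\{f_k\}_{k\in\bbn}$ lies in $\loopn_{\bbn}(X,x_0)$, so that the infinite concatenation $\sum_{k=1}^{\infty}f_k$ is defined. Since $\alpha_k\in\pi_n(X_{\geq k})$, we have $\im(f_k)\subseteq X_{\geq k}=\sw_{j=k}^{\infty}X_j$. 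Given any neighborhood $U$ of $x_0$, the shrinking-wedge topology (Definition \ref{swdef}) provides an $N$ with $X_j\subseteq U$ for all $j\geq N$, whence $X_{\geq k}\subseteq U$ and $\im(f_k)\subseteq U$ for all $k\geq N$. Thus $\{f_k\}_{k\in\bbn}$ converges to $x_0$.

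For independence of representatives, suppose $f_k,g_k\in\alpha_k$ for each $k$, so that $[f_k]=[g_k]$ holds in $\pi_n(X_{\geq k})$. I would choose based homotopies $H_k:S^n\times\ui\to X_{\geq k}$ from $f_k$ to $g_k$ taking place \emph{inside} $X_{\geq k}$; this is the crucial use of the indexing by $\pi_n(X_{\geq k})$ rather than $\pi_n(X)$, as it guarantees $\im(H_k)\subseteq X_{\geq k}$. The same neighborhood argument then shows $\{H_k\}_{k\in\bbn}$ converges to $x_0$, so it is a sequential homotopy from $\{f_k\}_{k\in\bbn}$ to $\{g_k\}_{k\in\bbn}$, and Proposition \ref{irsumsprop}(1) gives $\sum_{k=1}^{\infty}f_k\simeq\sum_{k=1}^{\infty}g_k$. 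Hence $\mathbf{S}_n(([f_k])_{k\in\bbn})$ does not depend on the chosen maps.

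For the homomorphism property, I would represent $\alpha=(\alpha_k)_{k\in\bbn}$ and $\beta=(\beta_k)_{k\in\bbn}$ by maps $f_k,g_k\in\loopn(X_{\geq k})$; since concatenation stays inside $X_{\geq k}$, the coordinatewise sum $\alpha+\beta=(\alpha_k+\beta_k)_{k\in\bbn}$ is represented by $\{f_k+g_k\}_{k\in\bbn}$, which again converges to $x_0$. Applying the linearity identity of Corollary \ref{linearityofsumscor} with all coefficients equal to $1$ yields $\sum_{k=1}^{\infty}(f_k+g_k)\simeqir\sum_{k=1}^{\infty}f_k+\sum_{k=1}^{\infty}g_k$, and passing to homotopy classes gives $\mathbf{S}_n(\alpha+\beta)=\mathbf{S}_n(\alpha)+\mathbf{S}_n(\beta)$. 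I expect no serious obstacle: the single recurring point requiring care is the convergence to $x_0$ of the sequences of maps and of homotopies, which is precisely what the cofinal indexing $X_{\geq k}$ is designed to supply; once clustering is in hand, every remaining step is immediate from the infinite-commutativity machinery of Section \ref{sectionprelim}.
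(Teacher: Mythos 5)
Your proof is correct and follows essentially the same route as the paper's: sequential homotopies within the cofinal subwedges $X_{\geq k}$ plus Proposition \ref{irsumsprop} for well-definedness, and Corollary \ref{linearityofsumscor} for additivity. The only difference is that you spell out the clustering argument from the shrinking-wedge topology, which the paper leaves implicit.
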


\begin{proof}
Suppose for each $k\in\bbn$, we have $[f_k]=[g_k]$ in $\pi_n(X_{\geq k})$ for maps $f_k,g_k\in \Omega^n(X_{\geq k})$. Then $\{f_k\}_{k\in\bbn}$ and $\{g_k\}_{k\in\bbn}$ converge to the basepoint and are sequentially homotopic. It follows from Proposition \ref{irsumsprop} that $\left[\sum_{k=1}^{\infty}f_k\right]=\left[\sum_{k=1}^{\infty}g_k\right]$. Thus $\mathbf{S}_n$ is well-defined. We have 
\begin{eqnarray*}
\mathbf{S}_n(([f_k])_{k\in\bbn}+([g_k])_{k\in\bbn}) &=& \mathbf{S}_n(([f_k+g_k])_{k\in\bbn})\\
&=&\left[\sum_{k=1}^{\infty}f_k+g_k\right]\\
&=& \left[\sum_{k=1}^{\infty}f_k\right]+\left[\sum_{k=1}^{\infty}g_k\right]\\
&=& \mathbf{S}_n(([f_k])_{k\in\bbn})+\mathbf{S}_n(([g_k])_{k\in\bbn})
\end{eqnarray*}
where the third equality follows from the linearity of infinite sums (Corollary \ref{linearityofsumscor}). Thus $\mathbf{S}_n$ is a homomorphism.
\end{proof}

\begin{definition}\label{wdef}
Suppose $X=\sw_{j\in\bbn}X_j$ is a shrinking wedge of based spaces, $n\geq 2$, and $f,g\in \Omega^n(X)$. We say the pair $(f,g)$ is \textit{wedge-disjoint} if there exist disjoint subsets $F,G$ of $\bbn$ such that $\im(f)\subseteq \bigcup_{j\in F}X_j$ and $\im(g)\subseteq \bigcup_{j\in G}X_j$. If $\alpha,\beta\in\pi_n(X)$, we say the pair $(\alpha,\beta)$ is \textit{wedge-disjoint} if there exist $f,g\in \Omega^n(X)$ such that $(f,g)$ is wedge-disjoint, $[f]=\alpha$, and $[g]=\beta$. Let \[\mcw_{2n-1}(X)=\left\llbracket\left\{[\alpha,\beta]\in \pi_{2n-1}(X)\mid (\alpha,\beta)\in\pi_{n}(X)^2\text{ is wedge-disjoint}\right\}\right\rrbracket.\]
\end{definition}

\begin{lemma}\label{replacementlemma}
Let $X=\sw_{j\in\bbn}X_j$ be a shrinking wedge of spaces where each $X_j$ has a countable neighborhood base of contractible sets at its basepoint. If $K$ is a compact based space and $\{f_k\}_{k\in\bbn}$ is a sequence of maps $f_k:K\to X$ that converges to the basepoint $x_0$ of $X$, then there exists a sequence $\{g_k\}_{k\in\bbn}$ of based maps $g_k:K\to X$ and a sequence of integers $\{m_k\}_{k\in\bbn}$ such that
\begin{enumerate}
\item $\{m_k\}_{k\in\bbn}\to\infty$,
\item $\im(g_k)\subseteq X_{\geq m_k}$ for all $k\in\bbn$,
\item $\{f_k\}_{k\in\bbn}$ and $\{g_k\}_{k\in\bbn}$ are sequentially homotopic.
\end{enumerate}
\end{lemma}

\begin{proof}
For each $j\in\bbn$, let $U_{j,2}\supseteq U_{j,3}\supseteq U_{j,4}\supseteq\cdots$ be a neighborhood base of contractible sets at the basepoint of $X_j$. Let $V_1=X$ and, for each $r\geq 2$, let $V_{r}=X_{\geq r}\vee \bigvee_{j=1}^{r-1}U_{j,r}$ so that $\{V_r\}_{r\geq 2}$ is a neighborhood base in $X$ at the wedgepoint. Note that the retraction $q_{r}:V_r\to X_{\geq r}$, which is the restriction of $p_{\geq k}$, is a based homotopy equivalence for each $r\geq 2$. Now, for each $k\in\bbn$, let $m_k$ be the smallest positive integer such that $\im(f_k)\subseteq V_{m_k}$ and note that $\{m_k\}_{k\in\bbn}\to\infty$ since $\{f_k\}_{k\in\bbn}$ converges to the basepoint. Let $g_k=q_{m_k}\circ f_k$ for each $k\in\bbn$. Then $\{g_k\}_{k\in\bbn}$ is the desired sequence since $f_k\simeq g_k$ in $V_{m_k}$ and $\im(g_k)\subseteq X_{\geq m_k}$.
\end{proof}

\begin{corollary}\label{replacementcorollary}
Lemma \ref{replacementlemma} also holds if each $X_j$ is a CW-complex.
\end{corollary}

\begin{proof}
Suppose $\{f_k\}_{k\in\bbn}$ is a sequence of based maps $K\to X$ that converges to the basepoint of a shrinking wedge $X=\sw_{j\in\bbn}X_j$ of CW-complexes $X_j$. Let $p_j:X\to X_j$ be the canonical retraction. For each $j\in\bbn$, the sequence $\{p_j\circ f_k\}_{k\in\bbn}$ of maps $K\to X_j$ converges to the basepoint of $X_j$. Therefore, $\bigcup_{k\in\bbn}\im(p_j\circ f_k)$ is compact and lies in a finite subcomplex $Y_j$ of $X_j$. Let $Y=\sw_{j\in\bbn}Y_j$. Now $\{f_k\}_{k\in\bbn}$ is a sequence of maps $K\to Y$ where $Y$ meets the hypotheses of Lemma \ref{replacementlemma} and the Corollary follows. 
\end{proof}

We will use Corollary \ref{replacementcorollary} several times to ``replace" an arbitrary sequence of maps $\{f_k\}_{k\in\bbn}\in \loopn_{\bbn}(X)$ with a sequentially homotopic sequence $\{g_k\}_{k\in\bbn}$ satisfying the conclusions stated in Lemma \ref{replacementlemma}.

\begin{lemma}\label{closurelemma}
Let $X=\sw_{j\in\bbn}X_j$ be a shrinking wedge of CW-complexes and $n\geq 2$. The subgroup $\mathcal{W}_{2n-1}(X)$ of $\pi_{2n-1}(X)$ is equal to the set of all elements of the form $\left[\sum_{i=1}^{\infty}[f_i,g_i]\right]$ where $\{f_i\}_{i\in\bbn},\{g_i\}_{i\in\bbn}\in\loopn_{\bbn}(X)$ and where each pair $(f_i,g_i)$ is wedge-disjoint.
\end{lemma}

\begin{proof}
Let $S=\{[\alpha,\beta]\in\pi_{2n-1}(X)\mid ( \alpha,\beta)\in \pi_{n}(X)^2\text{ is wedge-disjoint}\}$ so that $\mcw_{2n-1}(X)=\llbracket S\rrbracket$. Then a generic element of $\mcw_{2n-1}(X)$ has the form $\left[\sum_{i=1}^{\infty}w_i\right]$ where $\{w_i\}_{i\in\bbn}\in \Omega_{\bbn}^{n}(X)$ and $[w_i]\in \langle S\rangle$. By making the replacement from Corollary \ref{replacementcorollary} (without changing sequential homotopy class of $\{w_i\}_{i\in\bbn}$), we may assume there exists a sequence of integers $\{k_{i}\}_{i\in\bbn}\to\infty$ such that $\im(w_i)\subseteq X_{\geq k_{i}}$ for each $i\in\bbn$. Since $[w_i]\in \langle S\rangle$, we have $w_i\simeq \sum_{l=1}^{m_i}[f_{i,l},g_{i,l}]$ for wedge-disjoint pairs $(f_{i,l},g_{i,l})\in \Omega^n(X)^2$. Note that if an inverse $-[f_{i,l},g_{i,l}]$ appears in such a finite sum, we may use bilinearity to replace it, up to homotopy, with $[f_{i,l},-g_{i,l}]$ where $(f_{i,l},-g_{i,l})$ remains wedge-disjoint. Hence, the form $\sum_{l=1}^{m_i}[f_{i,l},g_{i,l}]$ is justified. Since it is not assumed that $\{\sum_{l=1}^{m_i}[f_{i,l},g_{i,l}]\}_{j\in\bbn}$ converges to the basepoint, we can not simply form an infinite sum from this sequence. Let $p_{\geq k}:X\to X_{\geq k}$ denote the canonical retraction map for each $k\in\bbn$. Then \[w_i=p_{\geq k_i}\circ w_i\simeq p_{\geq k_i}\circ\sum_{l=1}^{m_i}[f_{i,l},g_{i,l}]=\sum_{l=1}^{m_i}[p_{\geq k_i}\circ f_{i,l},p_{\geq k_i}\circ g_{i,l}]\] in $X_{\geq k_i}$ where each pair $(p_{\geq k_i}\circ f_{i,l},p_{\geq k_i}\circ g_{i,l})$ is still wedge-disjoint. Thus, \[\left\{\sum_{l=1}^{m_i}[p_{\geq k_i}\circ f_{i,l},p_{\geq k_i}\circ g_{i,l}]\right\}_{i\in\bbn}\] converges to the basepoint and is sequentially homotopic to $\{w_i\}_{i\in\bbn}$. An infinite horizontal concatenation gives \[\sum_{i=1}^{\infty}w_i\simeq \sum_{i=1}^{\infty}\sum_{l=1}^{m_i}[p_{\geq k_i}\circ f_{i,l},p_{\geq k_i}\circ g_{i,l}].\] Using a reparamterization homotopy to write the double sum as a single sum indexed by $\bbn$, we conclude that a generic element of $\mcw_{2n-1}(X)$ may be expressed in the form $\left[\sum_{j=1}^{\infty}[f_i',g_i']\right]$ for $\{f_i'\}_{i\in\bbn},\{g_i'\}_{i\in\bbn}\in\loopn_{\bbn}(X)$ and where each pair $(f_i',g_i')$ is wedge-disjoint. This completes the proof.
\end{proof}

\begin{lemma}\label{closurelemma2}
Let $X=\sw_{j\in\bbn}X_j$ be a shrinking wedge of CW-complexes and $n\geq 2$. The subgroup $\mathcal{W}_{2n-1}(X)$ is closed under infinite sums. Moreover, \[\mathcal{W}_{2n-1}(X)=cl_{\Sigma}\left(\{[\alpha,\beta]\in \pi_{2n-1}(X)\mid (\alpha,\beta)\in \pi_{n}(X)^2\text{ is wedge-disjoint}\}\right).\]
\end{lemma}

\begin{proof}
Define $S$ as in the proof of Lemma \ref{closurelemma} so that $\mcw_{2n-1}(X)=\llbracket S\rrbracket$. By Proposition \ref{inclusionprop}, we have $S\subseteq \llb S\rrb\leq cl_{\Sigma}(S)$. Thus, to prove $\llb S\rrb= cl_{\Sigma}(S)$, it suffices to show that $\llb S\rrb$ is closed under infinite sums. Suppose $\{h_i\}_{i\in\bbn}\in \loopn_{\bbn}(X)$ where $[h_i]\in \llb S\rrb$ for each $i\in\bbn$. By making the replacement from Corollary \ref{replacementcorollary} (without changing sequentially homotopy class of $\{h_i\}_{i\in\bbn}$), we may assume there exists a sequence of integers $\{k_{i}\}_{i\in\bbn}\to\infty$ such that $\im(h_i)\subseteq X_{\geq k_{i}}$ for each $i\in\bbn$. By Lemma \ref{closurelemma}, we have $[h_i]=\left[\sum_{j=1}^{\infty}[f_{i,j},g_{i,j}]\right]$ where $\{f_{i,j}\}_{j\in\bbn},\{g_{i,j}\}_{j\in\bbn}\in\loopn_{\bbn}(X)$ and where each pair $(f_{i,j},g_{i,j})$ is wedge-disjoint. Then \[h_i=p_{\geq k_i}\circ h_i\simeq p_{\geq k_i}\circ \sum_{j=1}^{\infty}[f_j,g_j]=\sum_{j=1}^{\infty}[p_{\geq k_i}\circ f_{i,j},p_{\geq k_i}\circ g_{i,j}]\] in $X_{\geq k_{i}}$ where each $(p_{\geq k_i}\circ f_{i,j},p_{\geq k_i}\circ g_{i,j})$ is a wedge-disjoint pair. This defines a sequential homotopy $\{h_i\}_{i\in\bbn}\simeq \left\{\sum_{j=1}^{\infty}[p_{\geq k_i}\circ f_{i,j},p_{\geq k_i}\circ g_{i,j}]\right\}_{i\in\bbn}$ and thus \[\sum_{i=1}^{\infty}h_i\simeq \sum_{i=1}^{\infty}\sum_{j=1}^{\infty}[p_{\geq k_i}\circ f_{i,j},p_{\geq k_i}\circ g_{i,j}].\] Using an infinite commutativity homotopy from Corollary \ref{infcomm2}, we may re-index the double infinite sum (indexed by $\bbn^2$) to be a single sum indexed by $\bbn$. We conclude that $\left[\sum_{i=1}^{\infty}h_i\right]\in \llb S\rrb$.
\end{proof}

\begin{corollary}\label{inkercor}
If $X=\sw_{j\in\bbn}X_j$ is a shrinking wedge of CW-complexes and $n\geq 2$, then $\mcw_{2n-1}(X)\leq \ker(\sigma_{\#})$ as subgroups of $\pi_{2n-1}(X)$.
\end{corollary}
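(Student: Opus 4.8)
The plan is to reduce the statement to a single wedge-disjoint Whitehead product and then exploit the minimality built into the infinite-sum closure. By Lemma \ref{closurelemma} we have $\mcw_{2n-1}(X)=cl_{\Sigma}(S)$, where $S=\{[\alpha,\beta]\in\pi_{2n-1}(X)\mid (\alpha,\beta)\in\pi_n(X)^2\text{ is wedge-disjoint}\}$, and by its definition $cl_{\Sigma}(S)$ is the smallest subgroup of $\pi_{2n-1}(X)$ that contains $S$ and is closed under infinite sums. On the other hand, Lemma \ref{sigmaclosedlemma} tells us that $\ker(\sigma_{\#})$ is a subgroup of $\pi_{2n-1}(X)$ that is itself closed under infinite sums. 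Thus $\ker(\sigma_{\#})$ is one of the subgroups appearing in the intersection defining $cl_{\Sigma}(S)$, provided it contains $S$. So the entire argument comes down to verifying the single inclusion $S\subseteq\ker(\sigma_{\#})$, after which minimality immediately gives $\mcw_{2n-1}(X)=cl_{\Sigma}(S)\leq\ker(\sigma_{\#})$.

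To handle a generator, I would fix a wedge-disjoint pair $(\alpha,\beta)$ with representatives $f,g\in\Omega^n(X)$ and disjoint sets $F,G\subseteq\bbn$ satisfying $\im(f)\subseteq\bigcup_{j\in F}X_j$ and $\im(g)\subseteq\bigcup_{j\in G}X_j$. By Remark \ref{inducedmapremark}, $\sigma_{\#}([\alpha,\beta])=[\sigma_{\#}\alpha,\sigma_{\#}\beta]$, so it suffices to show this bracket is trivial in $\pi_{2n-1}(\prod_{j\in\bbn}X_j)$. The first observation is that wedge-disjointness of the supports of $f$ and $g$ becomes disjoint \emph{product-factor} support after applying $\sigma$: since the retraction $p_j$ collapses $\bigcup_{i\in F}X_i$ to $x_j$ whenever $j\notin F$, the composite $\sigma\circ f$ has image in the sub-product $\prod_{j\in F}X_j$ (included as a factor), and likewise $\sigma\circ g$ has image in $\prod_{j\in G}X_j$.

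The main step is then to prove that a Whitehead product of classes supported on complementary factors of a product vanishes, which I would do by extending the wedge map over $S^n\times S^n$. Writing $a_{n,n}:S^{2n-1}\to S^n\vee S^n$ for the attaching map of the top cell, the element $\sigma_{\#}([f,g])$ is represented by $((\sigma\circ f)\vee(\sigma\circ g))\circ a_{n,n}$. I would define a map $E:S^n\times S^n\to\prod_{j\in\bbn}X_j$ coordinatewise, setting the $j$-th coordinate of $E(x,y)$ to $p_j(f(x))$ for $j\in F$, to $p_j(g(y))$ for $j\in G$, and to the constant $x_j$ otherwise. Disjointness of $F$ and $G$, together with the vanishing of $p_j\circ f$ for $j\notin F$ and $p_j\circ g$ for $j\notin G$, makes $E$ continuous and forces $E$ to restrict to $(\sigma\circ f)\vee(\sigma\circ g)$ on $S^n\vee S^n$. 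Since $a_{n,n}$ is null-homotopic in $S^n\times S^n$ (it bounds the top cell), the composite $E\circ a_{n,n}$ is null-homotopic, giving $\sigma_{\#}([\alpha,\beta])=0$.

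The reduction in the first paragraph is purely formal, so the heart of the proof is the coordinatewise extension $E$, and the step to get right is recognizing that wedge-disjointness of the supports of $f$ and $g$ is \emph{exactly} the condition that places $\sigma\circ f$ and $\sigma\circ g$ on independent factors of the product, which is what permits the extension and hence the null-homotopy. I do not expect a genuine obstacle beyond this: once the factor-support observation is in place, the continuity of $E$ and the check of its boundary behavior on the wedge are routine.
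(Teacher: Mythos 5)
Your proof is correct, and its outer skeleton is exactly the paper's: like the paper, you reduce via Lemma \ref{closurelemma} (identifying $\mcw_{2n-1}(X)$ with $cl_{\Sigma}(S)$), Lemma \ref{sigmaclosedlemma} (closure of $\ker(\sigma_{\#})$ under infinite sums), and the minimality in Definition \ref{infiniteclosuredef1} to the single inclusion $S\subseteq\ker(\sigma_{\#})$. Where you genuinely diverge is in verifying that inclusion for one wedge-disjoint product $[\alpha,\beta]$. The paper does it coordinatewise in three lines: for each $j$, disjointness of $F$ and $G$ means the retraction $p_j$ trivializes at least one of $\alpha,\beta$, so $p_{j\#}([\alpha,\beta])=[p_{j\#}\alpha,p_{j\#}\beta]=0$ by Remark \ref{inducedmapremark} together with the identity relation $[a,0]=[0,a]=0$ of Remark \ref{relations}, and since the projections detect elements of $\pi_{2n-1}(\prod_{j\in\bbn}X_j)$ this forces $[\alpha,\beta]\in\ker(\sigma_{\#})$. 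You instead run the classical product-extension argument: wedge-disjointness puts $\sigma\circ f$ and $\sigma\circ g$ on complementary blocks of factors, so $(\sigma\circ f)\vee(\sigma\circ g)$ extends over $S^n\times S^n$ via your coordinatewise map $E$, and precomposing with $a_{n,n}$ --- whose composite with the inclusion $S^n\vee S^n\hookrightarrow S^n\times S^n$ is null-homotopic because it extends over the top cell via the characteristic map --- kills $\sigma_{\#}([\alpha,\beta])$. Both arguments are sound; the paper's is shorter and leans on identities already catalogued, while yours is self-contained at the level of the definition of the bracket (you never invoke $[a,0]=0$; you reprove it geometrically) and makes visible the structural point that wedge-disjoint support becomes independent factor support under $\sigma$. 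Two small polish items: the extension over $D^{2n}$ a priori yields only a \emph{free} null-homotopy of $E\circ a_{n,n}$, which still forces the based class to vanish since the $\pi_1$-action on $\pi_{2n-1}$ fixes $0$, but this deserves a word; and continuity of $E$ follows merely from continuity of each coordinate --- the vanishing of $p_j\circ f$ for $j\notin F$ (and of $p_j\circ g$ for $j\notin G$) is needed only for the restriction of $E$ to the wedge, not for continuity.
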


\begin{proof}
If the pair $(\alpha,\beta)\in \pi_{n}(X)^2$ is wedge-disjoint and $j\in\bbn$ is fixed, then each projection $p_j:X\mapsto X_j$ trivializes at least one of $\alpha$ or $\beta$. Thus $p_{j\#}([\alpha,\beta])=0\in\pi_{2n-1}(X_j)$ for each $j$. It follows that $[\alpha,\beta]\in \ker(\sigma_{\#})$. Thus, if $S=\{[\alpha,\beta]\in \pi_{2n-1}(X)\mid (\alpha,\beta)\in \pi_{n}(X)^2\text{ is wedge-disjoint}\}$, then $S\subseteq \ker(\sigma_{\#})\leq \pi_{2n-1}(X)$. Since $\ker(\sigma_{\#})$ is closed under infinite sums by Lemma \ref{sigmaclosedlemma} and $\mcw_{2n-1}(X)=cl_{\Sigma}(S)$ by Lemma \ref{closurelemma2}, we have $\mcw_{2n-1}(X)= cl_{\Sigma}(S)\leq cl_{\Sigma}(\ker(\sigma_{\#}))=\ker(\sigma_{\#})$.
\end{proof}

\subsection{Characterizing Whitehead Products in $\pi_{2n-1}(\sw_{j\in\bbn}X_j)$}

In this subsection, we consider a shrinking wedge $X=\sw_{j\in\bbn}X_j$ of $(n-1)$-connectex CW-complexes $X_j$ with basepoint $x_0$. For each $k\in\bbn$, the inclusion maps $X_{k}\to X_{\geq k}$ and $X_{\geq k+1}\to X_{\geq k}$ and the bilinearity of the Whitehead bracket combine to give a homomorphism $\phi_{k}:\pi_{n}(X_k)\otimes \pi_{n}(X_{>k})\to \pi_{2n-1}(X_{\geq k})$ defined by $\phi_{k}(\alpha\otimes \beta)= [\alpha,\beta]$ on simple tensors. Any such pair $(\alpha,\beta)$ is wedge-disjoint and so $\im(\phi_k)\leq \mcw_{2n-1}(X)$. 

\begin{definition}\label{phidef}
Let \[\Phi:\prod_{k\in\bbn}\pi_{n}(X_k)\otimes \pi_{n}(X_{>k})\to  \pi_{2n-1}(X)\] be the canonical homomorphism defined by $\Phi=\mathbf{S}_{2n-1}\circ \prod_{k\in\bbn}\phi_k$ where $\mathbf{S}_{2n-1}$ is the summation homomorphism from Proposition \ref{sumhomomoprop} and $\phi_k$ is the homomorphism defined in the previous paragraph.
\end{definition}

\begin{remark}\label{imageinremark}
In its most general formulation, one can evaluate $\Phi$ as follows. Given an element $(g_k)_{k\in\bbn}\in\prod_{k\in\bbn}\pi_{n}(X_k)\otimes \pi_{n}(X_{>k})$ where $g_k=\sum_{i=1}^{r_k}[f_{k,i}]\otimes [g_{k,i}]$ for $f_{k,i}\in \Omega^n(X_k)$ and $g_{k,i}\in \Omega^n(X_{>k})$, $\Phi((g_k)_{k\in\bbn})$ is the element $\left[\sum_{k=1}^{\infty}\sum_{i=1}^{r_k}[f_{k,i},g_{k,i}]\right]$ of $\pi_{2n-1}\left(\sw_{k\in\bbn}X_k\right)$. Since each pair $([f_{k,i}],[g_{k,i}])$ is a wedge-disjoint pair, the Whitehead product $[f_{k,i},g_{k,i}]$ is an element of $\mcw_{2n-1}(X)$. Moreover, since $\Phi((g_k)_{k\in\bbn})$ is an infinite sum of such Whitehead products, we have $\im(\Phi)\leq \mcw_{2n-1}(X)$.
\end{remark}

The following Lemma is proved using standard methods including the Hurewicz and Kunneth Theorems.

\begin{lemma}\cite[\textsection XI.1(1.7)]{WhiteheadEOH}\label{basecaselemma}
Let $A$ and $B$ be $(n-1)$-connected CW-complexes. If $\sigma:A\vee B\to A\times B$ is the canonical inclusion, then the Whitehead product operation $\tau:\pi_{n}(A)\otimes \pi_n(B)\to \pi_{2n-1}(A\vee B)$ determined by $\tau(\alpha\otimes \beta)= [\alpha,\beta]$ is an isomorphism onto the kernel of the homomorphism induced by the inclusion $A\vee B\to A\times B$.
\end{lemma}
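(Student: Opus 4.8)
The plan is to extract the statement from the long exact sequence of the pair $(A\times B, A\vee B)$, identify its first nonvanishing relative homotopy group with a tensor product, and then recognize the connecting homomorphism as the Whitehead bracket. First I would record that the projections $A\times B\to A$ and $A\times B\to B$ split $\pi_i(A\times B)\cong \pi_i(A)\oplus\pi_i(B)$, while the inclusions of $A$ and $B$ into $A\vee B$ furnish sections showing that $\sigma_{\#}:\pi_i(A\vee B)\to\pi_i(A\times B)$ is a split epimorphism in every degree. Feeding this into the long exact sequence of the pair forces the map $\pi_i(A\times B)\to\pi_i(A\times B, A\vee B)$ to vanish, so the boundary map $\partial:\pi_i(A\times B, A\vee B)\to\pi_{i-1}(A\vee B)$ is injective with image exactly $\ker(\sigma_{\#})$ for every $i$. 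In particular $\partial:\pi_{2n}(A\times B, A\vee B)\to\pi_{2n-1}(A\vee B)$ is injective onto $\ker(\sigma_{\#})$.

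Next I would compute the source $\pi_{2n}(A\times B, A\vee B)$. Since $A$ and $B$ are $(n-1)$-connected and $n\geq 2$, the spaces $A\vee B$ and $A\times B$ are $(n-1)$-connected and the quotient $(A\times B)/(A\vee B)\cong A\wedge B$ is $(2n-1)$-connected. A short bootstrap using the relative Hurewicz theorem (legitimate because $A\vee B$ is simply connected) shows the pair is $(2n-1)$-connected and yields a natural isomorphism $\pi_{2n}(A\times B, A\vee B)\cong H_{2n}(A\times B, A\vee B)\cong \tilde{H}_{2n}(A\wedge B)$. The K\"unneth theorem collapses this to $H_n(A)\otimes H_n(B)$ with no $\mathrm{Tor}$ contribution, since a $\mathrm{Tor}$ term would sit in total degree $2n-1$, which is impossible for two factors each of whose reduced homology vanishes below degree $n$. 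A final application of the absolute Hurewicz theorem identifies this group with $\pi_n(A)\otimes\pi_n(B)$, and all of these isomorphisms are natural in the pair $(A,B)$.

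It then remains to verify that, under these identifications, the boundary operator $\partial$ is precisely the Whitehead product map $\tau$, and here I would reduce to the universal case $A=B=S^n$ by naturality. For $A=B=S^n$ the group $\pi_{2n}(S^n\times S^n, S^n\vee S^n)\cong\bbz$ is generated by the characteristic map of the top cell, whose attaching map is $a_{n,n}$; by the very definition of the Whitehead product recalled in Section \ref{sectionprelim}, the boundary of this generator is $[\iota_1,\iota_2]$, where $\iota_1,\iota_2:S^n\to S^n\vee S^n$ are the two inclusions, and this generator corresponds to $\iota_1\otimes\iota_2$ under the K\"unneth identification. For general $\alpha=[a]\in\pi_n(A)$ and $\beta=[b]\in\pi_n(B)$, the map of pairs $(a\times b, a\vee b)$ induces $a_{\#}\otimes b_{\#}$ on tensor products, so naturality of $\partial$ gives $\partial(\alpha\otimes\beta)=(a\vee b)_{\#}[\iota_1,\iota_2]=[\alpha,\beta]$, where the last equality is Remark \ref{inducedmapremark}. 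Hence $\partial=\tau$ under the identifications, and the already-established properties of $\partial$ transfer verbatim to $\tau$.

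I expect the main obstacle to be this final identification step rather than either of the first two, which are formal. The delicate point is confirming that the K\"unneth isomorphism carries the top-cell generator to $\iota_1\otimes\iota_2$ with the correct orientation and that the naturality square genuinely intertwines the homological identification with the homotopy-theoretic definition of $a_{n,n}$; keeping this bookkeeping straight is where care is needed. Because the paper fixes $a_{p,q}$ as the attaching map \emph{defining} the bracket, the sphere computation reduces to a definition-chase rather than an appeal to an external formula, which keeps the argument manageable.
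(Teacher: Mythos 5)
Your proof is correct and takes essentially the approach the paper intends: the paper gives no proof of Lemma \ref{basecaselemma}, citing \cite{WhiteheadEOH} with the remark that it follows by standard methods including the Hurewicz and K\"unneth theorems, and your argument---the long exact sequence of $(A\times B, A\vee B)$ with $\sigma_{\#}$ split epi, relative Hurewicz plus K\"unneth to get $\pi_{2n}(A\times B,A\vee B)\cong \pi_n(A)\otimes\pi_n(B)$, and naturality from the universal case $(S^n\times S^n, S^n\vee S^n)$ to identify $\partial$ with the bracket---is precisely that standard proof. The one point you flag as delicate, the orientation of the top-cell generator under the K\"unneth identification, is in fact harmless for the statement, since a sign would only replace $\tau$ by $-\tau$ and does not affect injectivity or the image being $\ker(\sigma_{\#})$.
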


There are various ways one could state a recursive application of the previous lemma. The following is most convenient for our purposes.

\begin{corollary}\label{finitecasecor}
Let $A_1,A_2,\dots,A_m$ be $(n-1)$-connected CW-complexes. Then the Whitehead product operation \[\tau:\bigoplus_{k=1}^{m-1}\left(\pi_{n}(A_k)\otimes \pi_n\left(\bigvee_{j=k+1}^{m} A_k\right)\right)\to \pi_{2n-1}\left(\bigvee_{j=1}^{m}A_j\right),\] determined by mapping $\alpha_k\otimes\beta_k\mapsto [\alpha_k,\beta_k]$ on each summand, is an isomorphism onto the kernel of the homomorphism induced by the inclusion $\bigvee_{j=1}^{m}A_j\to \prod_{j=1}^{m}A_j$.
\end{corollary}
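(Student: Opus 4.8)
The plan is to prove this by induction on $m$, taking Lemma \ref{basecaselemma} as the engine of the inductive step. The base case $m=1$ is vacuous: the domain of $\tau$ is the empty direct sum and the inclusion $A_1\to A_1$ is the identity, so $\ker$ is trivial and $\tau\colon 0\to 0$ is trivially an isomorphism. Throughout I will use the standard fact that a finite wedge of $(n-1)$-connected CW-complexes is again $(n-1)$-connected, so that Lemma \ref{basecaselemma} and the natural splitting $\pi_{2n-1}(C\times D)\cong\pi_{2n-1}(C)\oplus\pi_{2n-1}(D)$ both apply at each stage of the induction.

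For the inductive step I would write $W=\bigvee_{j=1}^{m}A_j=A_1\vee W'$ with $W'=\bigvee_{j=2}^{m}A_j$. Let $s\colon W\to A_1$ and $r\colon W\to W'$ be the retractions collapsing $W'$ and $A_1$ respectively, let $j\colon W'\to W$ be the inclusion, and let $\iota\colon W\to A_1\times W'$ and $\sigma'\colon W'\to\prod_{j=2}^{m}A_j$ be the canonical inclusions. Under $\pi_{2n-1}(A_1\times W')\cong\pi_{2n-1}(A_1)\oplus\pi_{2n-1}(W')$ one has $\iota_{\#}(x)=(s_{\#}(x),r_{\#}(x))$, and post-composing with $(\mathrm{id}\times\sigma')_{\#}$ yields the clean description
\[\sigma_{\#}(x)=\bigl(s_{\#}(x),\,\sigma'_{\#}(r_{\#}(x))\bigr),\]
so that $\ker(\sigma_{\#})=\{x\mid s_{\#}(x)=0\text{ and }r_{\#}(x)\in\ker(\sigma'_{\#})\}$. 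I would then split the domain of $\tau$ into the $k=1$ summand $\pi_n(A_1)\otimes\pi_n(W')$ and the remaining summands, the latter being exactly the domain of the map $\tau'$ for $W'$ furnished by the inductive hypothesis (after the obvious reindexing). By naturality of the Whitehead product (Remark \ref{inducedmapremark}), $\tau$ restricts on the first summand to the isomorphism $\tau_1$ of Lemma \ref{basecaselemma} onto $\ker(\iota_{\#})$, and on the remaining summands to $j_{\#}\circ\tau'$; thus $\tau=\tau_1\oplus(j_{\#}\circ\tau')$.

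The remaining work is to verify injectivity and that the image is exactly $\ker(\sigma_{\#})$, both short diagram chases resting on three facts: $r_{\#}j_{\#}=\mathrm{id}$, $s_{\#}j_{\#}=0$, and $\im(\tau_1)=\ker(\iota_{\#})=\ker(s_{\#})\cap\ker(r_{\#})$ (the last from Lemma \ref{basecaselemma}). For injectivity, applying $r_{\#}$ to a relation $\tau_1(\xi_1)+j_{\#}(\tau'(\xi'))=0$ annihilates the first term (since $\im(\tau_1)\subseteq\ker(r_{\#})$) and returns $\tau'(\xi')$, forcing $\xi'=0$ by the inductive injectivity of $\tau'$, whence $\tau_1(\xi_1)=0$ and $\xi_1=0$. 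For surjectivity onto $\ker(\sigma_{\#})$, given such an $x$ I would set $y=x-j_{\#}(r_{\#}(x))$; using $r_{\#}j_{\#}=\mathrm{id}$ and $s_{\#}j_{\#}=0$ one checks $r_{\#}(y)=0$ and $s_{\#}(y)=0$, so $y\in\ker(s_{\#})\cap\ker(r_{\#})=\im(\tau_1)$, while $r_{\#}(x)\in\ker(\sigma'_{\#})$ is hit by $\tau'$ by induction; combining these expresses $x$ as $\tau_1(\xi_1)+j_{\#}(\tau'(\xi'))=\tau(\xi_1\oplus\xi')$.

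I expect no conceptual obstacle, since all the genuinely hard content (Hurewicz and K\"unneth) is already packaged inside Lemma \ref{basecaselemma}. The only real care is bookkeeping: correctly identifying $\ker(\sigma_{\#})$ through the two projection conditions coming from the factorization $\sigma=(\mathrm{id}\times\sigma')\circ\iota$, and confirming that the retraction $r$ together with the splitting $\pi_{2n-1}(A_1\times W')\cong\pi_{2n-1}(A_1)\oplus\pi_{2n-1}(W')$ allows the first wedge-summand to be peeled off in a way compatible with the reindexing of the direct sum in the statement.
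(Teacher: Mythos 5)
Your proposal is correct and is essentially the paper's intended argument: the paper states this corollary without a written proof, describing it only as a ``recursive application'' of Lemma \ref{basecaselemma}, and your induction---peeling off $A_1$ via the retractions $s,r$, the factorization $\sigma=(\mathrm{id}\times\sigma')\circ\iota$, and the identification $\ker(\iota_{\#})=\ker(s_{\#})\cap\ker(r_{\#})=\im(\tau_1)$---is exactly that recursion made explicit. The diagram chases for injectivity and for surjectivity onto $\ker(\sigma_{\#})$ are sound as written.
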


Although tensor products do not preserve infinite direct products in general, the following situation is an exception. The next lemma is certainly known but does not seem to appear frequently. We include the proof for the sake of completion.

\begin{lemma}\label{tensorlemma}
If $G$ and $H_j$, $j\in J$ are finitely generated abelian groups, then the canonical homomorphism $\theta:G\otimes \prod_{j\in J}H_j\to \prod_{j\in J}G\otimes H_j$ defined by $\theta(a\otimes (b_j)_{j\in J})= (a\otimes b_j)_{j\in J}$ is an isomorphism.
\end{lemma}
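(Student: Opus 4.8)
The plan is to prove Lemma \ref{tensorlemma} by first reducing to the case where $G$ is cyclic, then handling the two cyclic cases ($G \cong \bbz$ and $G \cong \bbz/d\bbz$) directly. The reduction rests on the structure theorem: since $G$ is finitely generated abelian, write $G \cong \bigoplus_{i=1}^{s} C_i$ as a finite direct sum of cyclic groups. Both sides of the claimed isomorphism are additive in $G$ for \emph{finite} direct sums (tensor product distributes over finite direct sums, and a finite product of the $\prod_{j\in J}$-groups splits as a product of finite products), so $\theta$ decomposes as a finite direct sum of the analogous maps $\theta_i$ built from each $C_i$. Hence it suffices to show each $\theta_i$ is an isomorphism, i.e. we may assume $G$ is cyclic.

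The core of the argument is then the cyclic case, and this is where the finite-generation hypothesis on the $H_j$ is actually used. First I would dispose of $G \cong \bbz$: here $\bbz \otimes M \cong M$ naturally for any abelian group $M$, so both $\bbz \otimes \prod_j H_j$ and $\prod_j (\bbz \otimes H_j)$ are canonically identified with $\prod_j H_j$ and $\theta$ becomes the identity. For $G \cong \bbz/d\bbz$ with $d \geq 2$, I would use the natural identification $(\bbz/d\bbz) \otimes M \cong M/dM$ for any abelian group $M$. Under this, the source becomes $\left(\prod_{j\in J} H_j\right)\big/ d\left(\prod_{j\in J} H_j\right)$ and the target becomes $\prod_{j\in J}\left(H_j / d H_j\right)$, and $\theta$ is the evident comparison map $\left(\prod_j H_j\right)/d\left(\prod_j H_j\right) \to \prod_j (H_j/dH_j)$. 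Surjectivity of this map is automatic and requires no hypotheses; the content is injectivity, which amounts to showing that if $(x_j)_{j\in J} \in \prod_j H_j$ satisfies $x_j \in dH_j$ for every $j$, then $(x_j)_j \in d\prod_j H_j$, i.e. that one can choose preimages $y_j$ with $x_j = d y_j$ \emph{uniformly} across all $j$ to form an element $(y_j)_j$ of the product.

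The main obstacle is exactly this uniform-lifting step, and it is where finite generation is indispensable: in general $dM = M$ does not imply anything uniform, and for an infinite product of non-finitely-generated groups the comparison map can fail to be injective. The key point I would invoke is that each $H_j$, being finitely generated abelian, has $H_j/dH_j$ finite, but more usefully that the choice of a preimage under multiplication-by-$d$ can be made coherently. Concretely, for each fixed $j$ the set of $y_j \in H_j$ with $dy_j = x_j$ is nonempty by assumption, so the axiom of choice already produces a global section $(y_j)_j \in \prod_j H_j$ with $d(y_j)_j = (x_j)_j$; thus injectivity is in fact immediate and does not even need finiteness of $H_j/dH_j$. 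I therefore expect the subtle point to be recognizing that the cyclic-torsion case is the only nontrivial one and correctly handling the naturality of $(\bbz/d\bbz)\otimes M \cong M/dM$ so that the decomposition over cyclic summands of $G$ is compatible with $\theta$; the role of the finite-generation hypothesis is then best explained as guaranteeing $G$ is a finite sum of cyclic groups (so the reduction is to finitely many summands), while the product side needs no restriction. I would close by reassembling the cyclic cases through the finite direct-sum decomposition of $G$ to conclude that $\theta$ is an isomorphism.
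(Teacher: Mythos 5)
Your proof is correct, and the heart of it is genuinely different from the paper's. Both arguments open the same way: decompose $G$ into a finite direct sum of cyclic groups, note that both sides of $\theta$ respect this finite decomposition, dispose of $G\cong\bbz$ trivially, and isolate $G\cong\bbz/d\bbz$ as the only real case. From there the paper leans on the finite generation of the $H_j$: it splits $\prod_{j\in J}H_j$ as (a product of finite cyclic groups) $\oplus$ (a product of copies of $\bbz$), identifies $\bbz_m\otimes\bbz_{n_j}$ with $\bbz_{\gcd(m,n_j)}$, and proves injectivity coordinate-by-coordinate via a Bezout identity $p_jm+q_jn_j=g_j$ that lets it rewrite a vanishing family as $m\otimes(k_jp_j)_j=0$. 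You bypass all of this through the natural identification $(\bbz/d\bbz)\otimes M\cong M/dM$, under which $\theta$ becomes the comparison map $\left(\prod_j H_j\right)/d\left(\prod_j H_j\right)\to\prod_j\left(H_j/dH_j\right)$; surjectivity is a choice of representatives, and injectivity is the observation that $d\prod_j H_j=\prod_j dH_j$, since preimages under multiplication by $d$ can be chosen in every coordinate simultaneously. (Your mid-proof worry that this comparison map ``can fail to be injective'' for non-finitely-generated $H_j$ is unfounded, as you yourself then conclude: for multiplication by a fixed integer on a direct product it is always an isomorphism.) What your route buys is both brevity and strength: it shows the hypothesis that the $H_j$ be finitely generated is superfluous --- only finite generation of $G$ matters --- which reflects the general principle that $G\otimes{-}$ commutes with arbitrary direct products exactly when $G$ is finitely presented, and over $\bbz$ finitely generated abelian groups are finitely presented. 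The paper's coordinate computation, by contrast, genuinely uses the hypothesis on the $H_j$ (harmlessly, since in the application the groups $\pi_n(X_i)$ are finitely generated by Serre's theorem); its only advantage is that it is completely explicit.
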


\begin{proof}
Since $\theta$ is induced by the projection maps $G\otimes \prod_{j\in J}H_j\to G\otimes H_j$, it is a well-defined homomorphism. We check that $\theta$ is bijective. Since tensor products distribute over direct sums and direct products commute with finite direct sums, it suffices to verify the case where $G$ is a cyclic group. This is clear when $G=\bbz$ so we may assume $G=\bbz_{m}$ for $m>1$. In general, if $G$ is cyclic with generator $1$, then we can write every element of a tensor product $G\otimes H$ as a simple tensor $1\otimes h$. Since $\theta(1\otimes (h_j)_{j})=(1\otimes h_j)_{j}$, it follows that $\theta$ is surjective. For injectivity, note that since each $H_{j}$ is a finite direct product of cyclic groups, we may split the product $\prod_{j\in J}H_j$ as $A\oplus B$ where $A$ is a product of finite cyclic groups and $B$ is a product of infinite cyclic groups. Again applying distribution over direct sums we reduce to two cases: (1) for all $j\in J$, $H_j=\bbz_{n_j}$ for some integer $n_j>1$ and (2) for all $j\in J$, $H_j=\bbz$.\\
Case (1): Let $g_j=\gcd(m,n_j)$ and identify $\bbz_{m}\otimes\bbz_{n_j}=\bbz_{g_j}$ for each $j\in J$. Under this identification, we have $\theta(a\otimes (b_j)_{j})=(ab_j)_{j}\in \prod_{j\in J}\bbz_{g_j}$. If $\theta(1\otimes (h_j)_{j})=(h_j)_{j}=0$, then $h_j=k_jg_j$ for $k_j\in\bbz$. Write $p_jm+q_jn_j=g_j$ for $p_j,q_j\in\bbz$. Then
\[1\otimes (h_j)_{j}=1\otimes (k_jp_jm+k_jq_jn_j)_j=1\otimes (k_jp_jm)_j=m\otimes (k_jp_j)_j=0\]in $\bbz_{m}\otimes \prod_{j\in J}\bbz_{n_j}$.\\
Case (2): Identify $\bbz_{m}\otimes \bbz=\bbz_{m}$. Under this identification, we also have $\theta(a\otimes (b_j)_{j})=(ab_j)_{j}$. Suppose $\theta(1\otimes (h_j)_j)=(h_j)_j=0\in \prod_{j\in\bbn}\bbz_m$. For each $j\in J$, we have $h_j=k_jm$ for some $k_j\in \bbz$. Then $1\otimes (h_j)_j=1\otimes (k_jm)_j=m\otimes (k_j)_j=0$ in $\bbz_{m}\otimes \prod_{j\in\bbn}\bbz$.
\end{proof}

\begin{lemma}\label{projectionlemma}
Suppose $n\geq 2$, $\sw_{k\in\bbn}X_k$ is a shrinking wedge of finite $(n-1)$-connected CW-complexes, and $K\in \bbn$ is fixed. An element $g\in \pi_n(X_K)\otimes \pi_n(X_{>K})$ is non-trivial if and only if there exists $j>K$ such that if $p_{K,j}:X_{>K}\to X_j$ is the canonical retraction, then $(id\otimes p_{K,j\#})(g)$ is non-trivial in $\pi_n(X_K)\otimes \pi_n(X_j)$.
\end{lemma}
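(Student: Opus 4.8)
The plan is to reduce the statement to a direct application of Lemma \ref{tensorlemma}, after first using the Eda--Kawamura computation of $\pi_n(X_{>K})$ to express everything in terms of coordinate projections. First I would invoke Theorem \ref{connectedtheorem}: since $X_{>K}=\sw_{j>K}X_j$ is a shrinking-wedge of $(n-1)$-connected CW-complexes, the inclusion into the product induces an isomorphism $\pi_n(X_{>K})\xrightarrow{\cong}\prod_{j>K}\pi_n(X_j)$ whose $j$-th coordinate is exactly the map $p_{K,j\#}$ induced by the canonical retraction $p_{K,j}:X_{>K}\to X_j$ (this is immediate from the defining formula $x\mapsto (p_{K,j}(x))_{j>K}$ for the inclusion). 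Tensoring with the identity on $\pi_n(X_K)$ preserves this isomorphism, so $\pi_n(X_K)\otimes \pi_n(X_{>K})$ is identified with $\pi_n(X_K)\otimes \prod_{j>K}\pi_n(X_j)$, and under this identification $id\otimes p_{K,j\#}$ becomes $id$ tensored with the $j$-th projection.

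Next I would check the hypothesis needed to apply Lemma \ref{tensorlemma}. By the Hurewicz theorem, each $(n-1)$-connected finite CW-complex $X_i$ has $\pi_n(X_i)\cong H_n(X_i)$, which is finitely generated abelian. Thus Lemma \ref{tensorlemma} applies with $G=\pi_n(X_K)$ and $H_j=\pi_n(X_j)$, giving that \[\theta:\pi_n(X_K)\otimes \prod_{j>K}\pi_n(X_j)\to \prod_{j>K}\bigl(\pi_n(X_K)\otimes \pi_n(X_j)\bigr),\qquad a\otimes (b_j)_j\mapsto (a\otimes b_j)_j,\] is an isomorphism. Chasing the identifications from the first step, the $j$-th coordinate of $\theta$ is precisely $id\otimes p_{K,j\#}$.

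Finally, the claimed equivalence is just the observation that an element of a direct product is trivial if and only if all of its coordinates are trivial, transported across the isomorphism $\theta$. The backward implication is immediate: each $id\otimes p_{K,j\#}$ is a homomorphism, so $g=0$ forces $(id\otimes p_{K,j\#})(g)=0$ for every $j$. For the forward implication, if $g\neq 0$ then injectivity of $\theta$ (Lemma \ref{tensorlemma}) gives $\theta(g)\neq 0$, so some coordinate $(id\otimes p_{K,j\#})(g)$ is non-trivial in $\pi_n(X_K)\otimes \pi_n(X_j)$.

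I expect the only genuinely delicate point to be the bookkeeping in the first paragraph, namely confirming that $p_{K,j\#}$ really is the $j$-th coordinate of the Eda--Kawamura isomorphism and that tensoring with $\pi_n(X_K)$ carries this coordinatewise description faithfully onto the coordinatewise description of $\theta$. Everything else is a formal consequence of Lemma \ref{tensorlemma}; the finiteness of the CW-complexes is exactly the hypothesis that makes that lemma applicable, so beyond citing Hurewicz no further work is required.
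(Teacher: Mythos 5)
Your proof is correct and takes essentially the same route as the paper: identify $\pi_n(X_{>K})$ with $\prod_{j>K}\pi_n(X_j)$ via Theorem \ref{connectedtheorem}, apply Lemma \ref{tensorlemma} to get the isomorphism $\theta$ (legitimate since the groups involved are finitely generated), and note that the $j$-th coordinate of the composite isomorphism is exactly $id\otimes p_{K,j\#}$, so non-triviality is detected in some coordinate. The only cosmetic difference is that you justify finite generation of $\pi_n(X_i)$ directly via the Hurewicz theorem, while the paper cites Serre's theorem on homotopy groups of finite simply connected complexes; both suffice.
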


\begin{proof}
Since $X_{>K}=\sw_{i=K+1}^{\infty}X_i$ is a shrinking wedge of $(n-1)$-connected CW-complexes, Theorem \ref{connectedtheorem} gives that the canonical homomorphism $h:\pi_n(X_{>K})\to \prod_{i>K}\pi_n(X_{i})$ induced by projection maps is an isomorphism. Since each $X_k$ is a finite simply-connected CW-complex, each of the groups $\pi_n(X_k)$ is finitely generated by a Theorem of Serre \cite{Serre}. Since Lemma \ref{tensorlemma} applies, the canonical homomorphism $\theta:\pi_n(X_K)\otimes \prod_{i>K}\pi_n(X_i) \to\prod_{i>K}\pi_n(X_K)\otimes\pi_n(X_i)$ is an isomorphism. Thus, for each $j>K$, we have the following commutative diagram where the top arrows are the indicated isomorphisms and $r_j$ is the $j$-th product-projection map.
\[\xymatrix{
\pi_n(X_K)\otimes \pi_n(X_{>K}) \ar[dr]_-{id\otimes p_{K,j\#}} \ar[r]^-{id\otimes h} & \pi_n(X_K)\otimes \prod_{i>K}\pi_n(X_i) \ar[r]^-{\theta} &  \prod_{i>K}\pi_n(X_K)\otimes\pi_n(X_i) \ar[dl]^-{r_j} \\
& \pi_n(X_K)\otimes\pi_n(X_j)
}\]
Allowing $j$ to vary, note that an element $g'$ of the product $\prod_{i>K}\pi_n(X_K)\otimes\pi_n(X_i)$ is trivial if and only if it's projection $r_j(g')$ is trivial for all $j>K$. The lemma follows.
\end{proof}

We now prove a part of Theorem \ref{maintheorem}.

\begin{lemma}\label{mainlemma1}
If $n\geq 2$ and $X=\sw_{k\in\bbn}X_k$ is a shrinking wedge of finite $(n-1)$-connected CW-complexes, then the canonical homomorphism \[\Phi:\prod_{k=1}^{\infty}\left(\pi_{n}(X_k)\otimes \pi_{n}(X_{>k})\right)\to \pi_{2n-1}\left(X\right)\] defined in Definition \ref{phidef} is injective.
\end{lemma}

\begin{proof}
Suppose $0\neq g=(g_k)_{k\in\bbn}\in \prod_{k=1}^{\infty}\left(\pi_{n}(X_k)\otimes \pi_{n}(X_{>k})\right)$. Then there exists $K\in\bbn$ such that if $R_K$ is the $K$-th projection map of the product, then $R_K(g)=g_K\neq 0$. By Lemma \ref{projectionlemma}, there exists $j>K$ such that if $p_{K,j}:X_{>k}\to X_j$ is the projection map, then $(id \otimes p_{K,j\#})(g_K)\neq 0$ in $\pi_{n}(X_K)\otimes \pi_n(X_j)$. There is also a canonical retraction  $Q_{K,j}:X\to X_K\vee X_j$. We have the following commuting diagram where $\tau$ is the injective homomorphism from Lemma \ref{basecaselemma}.
\[\xymatrix{
\prod_{k=1}^{\infty}\left(\pi_{n}(X_k)\otimes \pi_{n}(X_{>k})\right) \ar[d]_-{(id\otimes p_{K,j\#})\circ R_K} \ar[r]^-{\Phi}  & \pi_{2n-1}\left(X\right) \ar[d]^-{Q_{K,j\#}} \\
\pi_{n}(X_K)\otimes \pi_n(X_j) \ar[r]_-{\tau} & \pi_{2n-1}(X_K\vee X_j)
}\]
Since $\tau$ is injective, $Q_{K,j\#}\circ \Phi(g)=\tau\circ(id \otimes p_{K,j\#})(g_K)\neq 0$. It follows that $\Phi(g)\neq 0$.
\end{proof}

\begin{lemma}\label{mainlemma2}
If $n\geq 2$ and $X=\sw_{j\in\bbn}X_j$ is a shrinking wedge of finite $(n-1)$-connected CW-complexes, then $\mcw_{2n-1}(X)$ is the image of the canonical homomorphism $\Phi:\prod_{k=1}^{\infty}\left(\pi_{n}(X_k)\otimes \pi_{n}(X_{>k})\right)\to \pi_{2n-1}\left(X\right)$.
\end{lemma}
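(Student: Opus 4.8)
The plan is to prove the two inclusions $\im(\Phi)\leq \mcw_{2n-1}(X)$ and $\mcw_{2n-1}(X)\leq \im(\Phi)$ separately. The first inclusion has already been recorded in Remark~\ref{imageinremark}, where each value $\Phi((g_k)_{k\in\bbn})$ is exhibited as an infinite sum of Whitehead products of wedge-disjoint pairs, so the real content is the reverse inclusion. By Lemma~\ref{closurelemma}, a generic element of $\mcw_{2n-1}(X)$ has the form $w=\left[\sum_{j=1}^{\infty}[f_j,g_j]\right]$ for sequences $\{f_j\}_{j\in\bbn},\{g_j\}_{j\in\bbn}\in\loopn_{\bbn}(X)$ with each pair $(f_j,g_j)$ wedge-disjoint. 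First I would show that $w$ can be rewritten as an infinite sum of Whitehead products in the exact ``standard form'' that $\Phi$ produces, namely products $[\phi,\psi]$ with $\phi\in\Omega^n(X_k)$ for a single index $k$ and $\psi\in\Omega^n(X_{>k})$.

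Next, I would perform this rewriting one index $j$ at a time, keeping careful track of images. Fix $j$ and let $F_j,G_j\subseteq\bbn$ be the disjoint (and, by compactness, finite) sets of summands met by $f_j$ and $g_j$. Since each $X_a$ is $(n-1)$-connected, the finite-wedge case of Theorem~\ref{connectedtheorem} lets me decompose $f_j\simeq \sum_{a\in F_j}f_{j,a}$ and $g_j\simeq\sum_{b\in G_j}g_{j,b}$ with $f_{j,a}\in\Omega^n(X_a)$ and $g_{j,b}\in\Omega^n(X_b)$. Applying finite bilinearity (Proposition~\ref{bilinearlityprop}) in both arguments expands $[f_j,g_j]$ up to image-relative homotopy into the finite sum $\sum_{a\in F_j,b\in G_j}[f_{j,a},g_{j,b}]$, and graded symmetry (Proposition~\ref{gradedsymprop}) converts each term with $a>b$ into $(-1)^{n^2}[g_{j,b},f_{j,a}]$, so that every resulting elementary product has its first factor supported on the summand of smaller index. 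All of these homotopies occur inside the finite sub-wedge $\bigvee_{c\in F_j\cup G_j}X_c$; since $\{f_j\}$ and $\{g_j\}$ cluster at $x_0$, the images of these homotopies cluster at $x_0$ as $j\to\infty$, so they assemble into a sequential homotopy and, via horizontal concatenation (Proposition~\ref{irsumsprop}), transfer to a homotopy of the whole sum $\sum_{j}[f_j,g_j]$.

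Finally, I would reorganize the resulting triply-indexed infinite sum of ascending elementary products by grouping the terms according to the smaller index $k$. The reordering is justified by the infinite-commutativity results (Corollaries~\ref{infcomm1} and~\ref{infcomm2} and Lemma~\ref{doublesumlemma}, all rooted in Theorem~\ref{cubeshuffletheorem}). For each fixed $k$, the terms landing at level $k$ have the form $[\phi,\psi]$ with $[\phi]\in\pi_n(X_k)$ and $[\psi]\in\pi_n(X_{>k})$, and I would set $g_k=\sum_i[\phi_{k,i}]\otimes[\psi_{k,i}]\in\pi_n(X_k)\otimes\pi_n(X_{>k})$; then $\Phi\big((g_k)_{k\in\bbn}\big)=w$ by construction, giving $w\in\im(\Phi)$.

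I expect the main obstacle to be confirming that each $g_k$ is a legitimate element of the tensor product, i.e. that only finitely many terms contribute a nonzero class at level $k$. This is exactly where finiteness of the $X_k$ enters: a finite $(n-1)$-connected CW-complex has arbitrarily small contractible neighborhoods of its basepoint, so a map $S^n\to X_k$ of nonzero class cannot have arbitrarily small image. Combined with the clustering of $\{f_j\}$ and $\{g_j\}$ at $x_0$, this forces all but finitely many of the level-$k$ contributions to be null-homotopic, hence removable up to image-relative homotopy, so that $g_k$ is a finite sum of simple tensors. The remaining delicacy is purely bookkeeping: ensuring that every reordering and regrouping above is realized by homotopies whose images still cluster at $x_0$, so that the infinite concatenations remain legitimate.
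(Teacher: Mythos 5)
There is a genuine gap, and it occurs at the very first step of your reverse inclusion: the claim that the sets $F_j,G_j$ of wedge-summands met by $f_j$ and $g_j$ are finite ``by compactness.'' This is false in a shrinking wedge. Compactness of $\im(f_j)$ is perfectly compatible with $\im(f_j)$ meeting \emph{every} summand, because every neighborhood of $x_0$ already contains all but finitely many $X_a$; for instance, in $\bbe_n$ the map $\sum_{k=1}^{\infty}\ell_{2k-1}$ is continuous with image meeting infinitely many spheres, and pairing it with $\sum_{k=1}^{\infty}\ell_{2k}$ gives a wedge-disjoint pair whose supports are both infinite (Definition \ref{wdef} only requires $F,G$ disjoint, not finite). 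Consequently your reduction of $[f_j,g_j]$ to a finite sum via finite bilinearity (Proposition \ref{bilinearlityprop}) does not apply, and this is not mere bookkeeping: if you decompose $f_j\simeq\sum_{i=1}^{\infty}f_{j,i}$ and $g_j\simeq\sum_{i=1}^{\infty}g_{j,i}$ with $f_{j,i},g_{j,i}$ in $X_i$ (which Theorem \ref{connectedtheorem} does give, for the full infinite decomposition), the naive term-by-term expansion into elementary products $[f_{j,a},g_{j,b}]$ produces, for fixed $a$, a family $\{[f_{j,a},g_{j,b}]\}_{b}$ that does \emph{not} cluster at $x_0$ unless $f_{j,a}$ is constant, so the would-be infinite sum of elementary products is not even a continuous map. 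This is precisely the obstruction the paper flags when it says one cannot ``pull out'' an infinite sum from a Whitehead product.

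The paper's proof supplies the missing tool: the infinite bilinearity identity (Lemma \ref{bilinearitylemma2}, culminating in Theorem \ref{mainidentitylemma}), which expands $\left[\sum_i f_{j,i},\sum_i g_{j,i}\right]$ not into elementary products but into three sums whose terms are $[f_{j,i},g_{j,i}]$, $\left[f_{j,i},\sum_{k>i}g_{j,k}\right]$, and $(-1)^{n^2}\left[g_{j,i},\sum_{k>i}f_{j,k}\right]$ --- the second factors are \emph{tail sums}, kept intact exactly so that the summands cluster at $x_0$, and they are exactly what the tensor factor $\pi_n(X_{>k})$ in the domain of $\Phi$ is designed to receive. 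Wedge-disjointness then kills the diagonal terms $[f_{j,i},g_{j,i}]$, and the finiteness you need at each level $k$ comes from the clustering of the original sequences (only finitely many $j$ contribute a nonconstant $f_{j,k}$ or $g_{j,k}$; your small-contractible-neighborhood observation is a correct variant of this point, and it is the one place where your proposal and the paper agree on a nontrivial detail). Your final regrouping step and the identification $\Phi((g_k)_k)=w$ are fine in outline, but without Theorem \ref{mainidentitylemma} the argument cannot get to that stage, so the proposal as written does not prove the lemma.
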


\begin{proof}
In Remark \ref{imageinremark} it is shown that $ \im(\Phi)\leq \mcw_{2n-1}(X)$. For the other inclusion, let $\alpha\in\mcw_{2n-1}(X)$. By Lemma \ref{closurelemma}, we may write $\alpha=\left[\sum_{j=1}^{\infty}[f_j,g_j]\right]$ where $\{f_j\}_{j\in\bbn},\{g_j\}_{j\in\bbn}\in\loopn_{\bbn}(X)$ and where each pair $(f_j,g_j)$ is wedge-disjoint. Applying Lemma \ref{replacementlemma} to both $\{f_j\}_{j\in\bbn}$ and $\{g_j\}_{j\in\bbn}$, we may assume that there is a sequence of integers $\{m_j\}_{j\in\bbn}\to\infty$ such that $\im(f_j)\cup\im(g_j)\subseteq X_{\geq m_j}$ for each $j$. Recall from the proof of Lemma \ref{replacementlemma} that such a replacement for $f_j$ has image contained in $\im(f_j)$ (and similarly for $g_j$). Thus each pair $(f_j,g_j)$ will still be wedge-disjoint following this replacement. Corollary \ref{irwhiteheadcor} ensures this replacement does not change the homotopy class of the sum $\sum_{j=1}^{\infty}[f_j,g_j]$.

Recall from Theorem \ref{connectedtheorem} that the homomorphism $h:\pi_{n}(X)\to \prod_{k\in\bbn}\pi_n(X_k)$ induced by the projection maps is an isomorphism. This allow us to write $f_j\simeq \sum_{i=1}^{\infty}f_{j,i}$ and $g_j\simeq\sum_{i=1}^{\infty}g_{j,i}$ for unique homotopy classes $[f_{j,i}],[g_{j,i}]\in \pi_n(X_i)$ (and we assume $f_{j,i},g_{j,i}\in \Omega^n(X_i)$). In particular, if $p_i:X\to X_{i}$ is the canonical retraction, then $p_{i\#}([f_j])=[f_{j,i}]$ and $p_{i\#}([g_j])=[g_{j,i}]$. Moreover, whenever $[f_{j,i}]=0$, we take $f_{j,i}$ to be constant and whenever $[g_{j,i}]=0$, we take $g_{j,i}$ to be constant. Thus, for each $j$, the sums $\sum_{i=1}^{\infty}f_{j,i}$ and $\sum_{i=1}^{\infty}g_{j,i}$ have image in $X_{\geq m_j}$. Since $\{m_j\}_{j\in\bbn}\to\infty$, we have sequential homotopies $\{f_j\}_{j\in\bbn}\simeq \{\sum_{i=1}^{\infty}f_{j,i}\}_{j\in\bbn}$ and $\{g_j\}_{j\in\bbn}\simeq \{\sum_{i=1}^{\infty}g_{j,i}\}_{j\in\bbn}$. By Corollary \ref{irwhiteheadcor}, there is a sequential homotopy $\{[f_j,g_j]\}_{j\in\bbn}\simeq\{[\sum_{i=1}^{\infty}f_{j,i},\sum_{i=1}^{\infty}g_{j,i}]\}_{j\in\bbn}$. A horizontal concatenation then gives \[\alpha=\left[\sum_{j=1}^{\infty}[f_j,g_j]\right]=\left[\sum_{j=1}^{\infty}\left[\sum_{i=1}^{\infty}f_{j,i}, \sum_{i=1}^{\infty}g_{j,i}\right]\right].\]
Applying Theorem \ref{mainidentitylemma} to this last expression for $\alpha$, we have
\begin{equation}\label{alphaeqn}
\alpha=\left[\sum_{(j,i)\in\bbn^2}[f_{j,i},g_{j,i}]+
\sum_{(j,i)\in\bbn^2}\left[f_{j,i},\sum_{k>i}g_{j,k}\right]+(-1)^{n^2}\sum_{(j,i)\in\bbn^2}\left[g_{j,i},\sum_{k>i}f_{j,k}\right]\right].
\tag{$\ast$}
\end{equation}
Note that for fixed $j\in\bbn$, the pair $(f_j,g_j)$ is a wedge-disjoint pair and thus for each $k\geq m_j$, at least one of the maps $f_{j,k}$ or $g_{j,k}$ must be constant. Hence, for any pair $(j,i)$, at least one of $f_{j,i}$ or $g_{j,i}$ is constant. It follows that we may delete, using a sequential null-homotopy, all Whitehead products in \eqref{alphaeqn} that have a constant map in one of its two coordinates. In particular, this deletes the left-most term $\sum_{(j,i)\in\bbn^2}[f_{j,i},g_{j,i}]$ entirely. For each $i\in\bbn$, the sets $A(i)=\{j\in\bbn\mid f_{j,i}\text{ is not constant}\}$ and $B(i)=\{j\in\bbn\mid g_{j,i}\text{ is not constant}\}$ are finite. Commuting the sums indexed by $i$ and $j$ (Lemma \ref{doublesumlemma}), we have
\[\alpha=\left[\sum_{i=1}^{\infty}\sum_{j\in A(i)}\left[f_{j,i},\sum_{k>i}g_{j,k}\right]+ (-1)^{n^2}\sum_{i=1}^{\infty}\sum_{j\in A(i)}\left[g_{j,i},\sum_{k>i}f_{j,k}\right]\right].\]
Note that $\gamma_i=\sum_{j\in A(i)}[f_{j,i}]\otimes [\sum_{k>i}g_{j,k}]$ and $\delta_i=\sum_{j\in B(i)}[g_{j,i}]\otimes [\sum_{k>i}f_{j,k}]$ are both elements of $\pi_n(X_i)\otimes \pi_{n}(X_{>i})$. The formula for $\Phi$ gives
\[\Phi((\gamma_i)_{i})=\left[\sum_{i=1}^{\infty}\sum_{j\in A(i)}\left[f_{j,i},\sum_{k>i}g_{j,k}\right]\right]\]and 
\[\Phi((\delta_i)_{i})=\left[\sum_{i=1}^{\infty}\sum_{j\in A(i)}\left[g_{j,i},\sum_{k>i}f_{j,k}\right]\right].\]
We conclude that $\Phi((\gamma_i)_{i}+(-1)^{n^2}(\delta_i)_{i})=\alpha$, which proves $\alpha\in \im(\Phi)$.
\end{proof}

\begin{proof}[Proof of Theorem \ref{maintheorem}]
Lemma \ref{mainlemma1} shows that $\Phi$ is injective and Lemma \ref{mainlemma2} gives $\im(\Phi)=\mcw_{2n-1}(X)$. For the last statement, we have $\pi_n(X_{>j})\cong \prod_{k>j}\pi_{n}(X_j)$ for each $j\in\bbn$ by Theorem \ref{connectedtheorem}. An appropriate combination of these isomorphisms with $\Phi$ completes the proof.
\end{proof}

Once again applying the fact that finite simply connected CW-complexes have finitely generated homotopy groups, we may combine Theorem \ref{maintheorem} with Lemma \ref{tensorlemma} to obtain the following.

\begin{corollary}\label{maincor}
Suppose $n\geq 2$ and $X=\sw_{j\in\bbn}X_j$ is a shrinking wedge of finite $(n-1)$-connected CW-complexes. Then the subgroup $\mcw_{2n-1}(X)$ of $\pi_{2n-1}(X)$ is canonically isomorphic to $\prod_{k>j}\pi_{n}(X_j)\otimes \pi_{n}(X_k)$.
\end{corollary}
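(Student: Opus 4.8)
The plan is to obtain this simplified description directly from Theorem \ref{maintheorem} by commuting the tensor product past the infinite product, which is exactly the content of Lemma \ref{tensorlemma}. Theorem \ref{maintheorem} already supplies a canonical isomorphism $\mcw_{2n-1}(X)\cong \prod_{j=1}^{\infty}\left(\pi_{n}(X_j)\otimes \prod_{k>j}\pi_n(X_k)\right)$, so it suffices to rewrite each factor $\pi_{n}(X_j)\otimes \prod_{k>j}\pi_n(X_k)$ as the product $\prod_{k>j}\left(\pi_{n}(X_j)\otimes \pi_n(X_k)\right)$.

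First I would verify the finite-generation hypothesis needed to invoke Lemma \ref{tensorlemma}. Since $n\geq 2$ and each $X_j$ is $(n-1)$-connected, each $X_j$ is in particular simply connected; being also a finite CW-complex, its homotopy group $\pi_n(X_j)$ is a finitely generated abelian group by Serre's theorem \cite{Serre}. Thus, for each fixed $j\in\bbn$, the group $G=\pi_n(X_j)$ and the groups $H_k=\pi_n(X_k)$ for $k>j$ are all finitely generated abelian, so Lemma \ref{tensorlemma} applies with index set $J=\{k\in\bbn\mid k>j\}$.

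With the hypothesis in place, I would apply Lemma \ref{tensorlemma} to each index $j$ separately, producing a canonical isomorphism $\theta_j:\pi_n(X_j)\otimes \prod_{k>j}\pi_n(X_k)\to \prod_{k>j}\left(\pi_n(X_j)\otimes \pi_n(X_k)\right)$. Taking the product of these isomorphisms over all $j$ and reindexing the resulting double product over the set of pairs $\{(j,k)\in\bbn^2\mid k>j\}$ yields a canonical isomorphism $\prod_{j=1}^{\infty}\left(\pi_{n}(X_j)\otimes \prod_{k>j}\pi_n(X_k)\right)\cong \prod_{k>j}\pi_{n}(X_j)\otimes \pi_{n}(X_k)$. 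Composing with the isomorphism of Theorem \ref{maintheorem} gives the claimed canonical isomorphism for $\mcw_{2n-1}(X)$.

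This argument is essentially bookkeeping, so there is no serious obstacle; the only substantive point is the appeal to finiteness of the $X_j$, which guarantees finite generation and is precisely what makes Lemma \ref{tensorlemma} applicable. Without that hypothesis, tensor products need not commute with infinite products, and the simplified representation would fail.
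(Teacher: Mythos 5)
Your proposal is correct and matches the paper's proof exactly: the paper likewise combines Theorem \ref{maintheorem} with Lemma \ref{tensorlemma}, citing Serre's theorem that simply connected finite CW-complexes have finitely generated homotopy groups to justify the hypothesis. Your factorwise application of $\theta_j$ and the reindexing of the double product over $\{(j,k)\in\bbn^2\mid k>j\}$ is just an explicit spelling-out of the bookkeeping the paper leaves implicit.
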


\begin{remark}
One may be tempted to interpret the inverse of the isomorphism from Corollary \ref{maincor} as a map $\prod_{k>j}\pi_{n}(X_j)\otimes \pi_{n}(X_k)\to \mcw_{2n-1}(X)$ sending $([f_j]\otimes [g_k])_{k>j}$ to the homotopy class infinite sum $\sum_{k>j}[f_j,g_k]$. However, the map $\sum_{k>j}[f_j,g_k]$ will not typically be continuous for if all of the maps $f_j$ and $g_j$ are not null-homotopic, then the collection $\{[f_j,g_k]\}_{k>j}$ does not cluster at the basepoint, e.g. $\{[f_1,g_k]\}_{k\geq 2}$ will not converge to $x_0$. Hence, it seems that our application of the purely algebraic result Lemma \ref{tensorlemma} tends to forget information about representing maps. From a topological perspective, the isomorphism in Theorem \ref{maintheorem} may be more practical.
\end{remark}

\end{document}